	\pgfplotsset{compat=1.17}
\newcommand{\N}{\mathbb{N}}
\newcommand{\Z}{\mathbb{Z}}
\newcommand{\R}{\mathbb{R}}
\newcommand{\abs}[1]{\left\lvert#1\right\rvert}
\newcommand{\absB}[1]{\Big\lvert#1\Big\rvert}
\newcommand{\norm}[1]{\left\lVert#1\right\rVert}
\newcommand{\seminorm}[1]{\left\lvert#1\right\rvert}
\newcommand{\restrict}[1]{|_{#1}}
\newcommand{\deq}{\vcentcolon=}
\newcommand{\dmu}{\,d\mu}
\newcommand{\dmux}{\,d\mu(x)}
\newcommand{\muw}{w}
\newcommand{\dsigma}{\,d\sigma}
\newcommand{\dsigmax}{\,d\sigma(x)}
\newcommand{\sigmav}{v}
\newcommand{\eps}{\varepsilon}
\DeclareMathOperator{\Ker}{Ker}
\DeclareMathOperator{\Img}{Im}
\DeclareMathOperator{\spann}{span} %
\DeclareMathOperator{\diver}{div}
\DeclareMathOperator{\nnz}{nnz}
\DeclareMathOperator{\round}{round}
\theoremstyle{definition}
\newtheorem{lemma}{Lemma}[section]
\newtheorem{proposition}[lemma]{Proposition}
\newtheorem{corollary}[lemma]{Corollary}
\newtheorem{example}[lemma]{Example}
\newtheorem{algotheorem}[lemma]{Algorithm}
\newcommand{\Omegabar}{\overline{\Omega}}
\newcommand{\bbox}{H}
\newcommand{\Acal}{\mathcal{A}}
\newcommand{\Bcal}{\mathcal{B}}
\newcommand{\Dcal}{\mathcal{D}}
\newcommand{\Lcal}{\mathcal{L}}
\newcommand{\Mcal}{\mathcal{M}}
\newcommand{\Ocal}{\mathcal{O}}
\newcommand{\Scal}{\mathcal{S}}
\newcommand{\Ucal}{\mathcal{U}}
\newcommand{\Qcal}{\mathcal{Q}}
\newcommand{\fhat}{	\hat{f}}
\newcommand{\ghat}{	\hat{g}}
\begin{document}

\title{Meshless moment-free quadrature formulas \\
arising from numerical differentiation}
\author{
Oleg Davydov\thanks{Department of Mathematics,
	University of Giessen, Arndtstrasse 2, 35392 Giessen, Germany,
	\url{oleg.davydov@math.uni-giessen.de}}, \hspace*{4pt}
Bruno Degli Esposti\thanks{Department of Mathematics
	and Computer Science, University of Florence,
	Viale Morgagni 67, 50134 Firenze, Italy,
	\url{bruno.degliesposti@unifi.it}}
}
\date{May 31, 2025}
\maketitle

\begin{abstract}
We suggest a method for simultaneously generating high order
quadrature weights for integrals over %
Lipschitz domains and their boundaries that requires
neither meshing nor moment computation.
The weights are computed on pre-defined scattered nodes as a
minimum norm solution of a sparse underdetermined linear system
arising from a discretization of a suitable boundary value
problem by either collocation or meshless finite differences.
The method is easy to implement independently of domain's
representation, since it only requires as inputs the
position of all quadrature nodes and the direction
of outward-pointing normals at each node belonging to the boundary.
Numerical experiments demonstrate the
robustness and high accuracy of the method on a number of smooth
and piecewise smooth domains in 2D and
3D, including some with reentrant corners and edges.
Comparison with quadrature schemes provided by the 
state-of-the-art open source packages Gmsh and MFEM
shows that the new method is competitive in terms
of accuracy for a given number of nodes.

\end{abstract} 

\graphicspath{{./figures/}}

\section{Introduction}
\label{intro}

Efficient high order numerical integration over
piecewise smooth domains $\Omega$ in $\R^2$
and $\R^3$ and on surfaces is a fundamental tool
in numerical analysis and engineering applications,
see for example extensive literature reviews in the recent articles
\cite{divi2020error,gunderman2021high}. In this paper we introduce
a new technique for generating quadrature
formulas on scattered nodes that only requires as inputs
the outward-pointing normals along $\partial\Omega$
at the boundary nodes, in addition to the positions
of the nodes themselves. The method relies on the discretization
of the divergence operator
by numerical differentiation  and the least squares solution
of a sparse underdetermined linear system.

Given a set of nodes
$Y=\{y_1,\ldots,y_N\}\subset\overline{\Omega}$,
the classical way to obtain a weight vector
$\muw = (\muw_i)_{i=1}^N\in\R^N$ of a quadrature formula
\begin{equation} \label{qf}
\int_{\Omega}f(x)\dmu(x)\approx \sum_{i=1}^N\muw_if(y_i)
\end{equation}
for the integral with respect to some measure
$\mu$ over $\Omega$ is to choose a finite
dimensional space
$\Scal = \spann \{ s_1,\dots,s_M \}$ of continuous functions
with good approximation properties, such as
(piecewise) polynomials or radial basis functions, and require
the exactness of the formula for all $f\in\Scal$, which is
equivalent to the exact reproduction of the \emph{moments}
$\int_{\Omega}s_j\dmu$,
\begin{equation} \label{exact}
\int_{\Omega}s_j(x)\dmu(x)=\sum_{i=1}^N\muw_is_j(y_i),\quad
j=1,\ldots,M.
\end{equation}
This design principle based on exactness
(called \emph{moment fitting} by some authors)
leads to a linear system for the weight vector $\muw$,
as soon as all the moments are known. In particular, the book
\cite{stroud1971approximate} presents many quadrature formulas
for special multivariate regions such as cubes, simplices, balls
or spheres, and special node sets, where exactness is enforced
for spaces of polynomials, and the moments are computed by
analytic methods. With the help of the substitution rule these
formulas can be adapted to the smooth images of such elementary
regions. A more complicated domain may be partitioned into mapped
elementary regions leading to a compound quadrature formula
generated by summing up the formulas over this partition. This
approach is used in particular in the finite element method.
To achieve high order accuracy of integration, however,
it requires challenging high-order mesh generation, 
a task that is often prohibitively costly and non-robust
in applications
\cite{turner2016variational,gunderman2021high}.
In the framework of isogeometric analysis,
the elementary regions are defined by spline patches,
and the moment fitting principle
is applied with B-splines
as basis functions
\cite{antolin2019isogeometric,zou2022efficient}.
Instead of meshing into images of elementary regions,
different types of partitioning may be used,
followed by the dimension reduction approach
that replaces volume or surface integrals by repeated
univariate integration, such as the standard iterated
integration on generalized rectangles or sectors
\cite{stroud1971approximate,shampine2008matlab}. Methods of
this type have in particular been developed for domains defined
by a level set function \cite{muller2013highly,saye2015high}. 
Recent versions of the dimension reduction method
\cite{sommariva2009gauss,gunderman2021high,gunderman2021spectral}
rely on the divergence theorem
over arbitrary subregions with known parametrizations
of their piecewise smooth boundaries.
However, a significant drawback of these 
methods is that many quadrature nodes must be placed
outside of $\Omegabar$ in its bounding box, unless
the shape of the integration domain satisfies some restrictive
assumptions (see e.g.\ the concept of a base-line
in \cite{sommariva2009gauss}).

Another line of research is to look for (near-)optimal node
placement to achieve as small as possible number $N$ of nodes satisfying the
exactness condition \eqref{exact} for a given space $\Scal$,
which requires solving nonlinear problems 
\cite{auricchio2012simple,bartovn2017gauss,hashemian2023solving,mousavi2010generalized} 
or infinite-dimensional linear programs \cite{ryu2015extensions}.
High order quadrature formulas with equal weights are studied 
for special domains in $\R^d$ such as a cube or a sphere by
quasi-Monte Carlo methods \cite{brauchart2014qmc,dick2013high}.

When the quadrature nodes in $Y$ are fixed in advance
(for example, they may be supplied by the user, so that
we are not in control of their placement)
and the task is to generate the weight vector $\muw$,
the problem setting is related to scattered data fitting,
because the integral
$\int_{\Omega}f\dmu$ can be approximated by
$\int_{\Omega}s\dmu$ if $s$ is an approximation of $f$, and
this leads to a quadrature formula as soon as $s$ is obtained for
each fixed $Y$ as $s=\Qcal_Y f|_Y$, with $\Qcal_Y:\R^N\to\Scal$
being a linear scattered data fitting operator and 
$f|_Y:=(f(y_i))_{i=1}^{N}$. The most established tool
for scattered data fitting is kernel-based interpolation, in
particular with radial basis functions \cite{Wendland}. The
meshless
quadrature formulas obtained this way provide optimal recovery 
of the integration functional \cite{micchelli1985lectures}, in
particular polyharmonic and thin plate spline RBF give rise to
optimal quadrature formulas for multivariate Sobolev spaces 
\cite{sobolev1961formulas}. However, numerical implementation of
these methods for general domains is problematic because the
corresponding linear system \eqref{exact} is not sparse, and the
moments of the radial basis functions have to be computed. The 
first problem may be tackled by either partitioning $\Omega$ or
using  spaces $\Scal$ with locally supported bases such as
B-splines, but the computation of the moments remains a major
obstacle unless $\Omega$ is a polygonal
domain~\cite{sommariva2021rbf}. A dimension reduction approach for the
moments over curved triangles has been suggested
in~\cite{reeger2018numerical}. When using polynomial spaces $\Scal$
on scattered nodes, a feasible approach is to choose the
polynomial degree such that the  dimension $M$ of $\Scal$ is
significantly smaller than the number of nodes $N$, which
typically makes the system \eqref{exact} underdetermined and
solvable. Then a weight vector $\muw$ satisfying \eqref{exact} 
may be found as a minimum norm solution 
\cite{levin1999stable,fornberg-spherical,glaubitz2021stable,%
glaubitz2023construction,sommariva2023low} or by
subset selection \cite{SoVia09}. These methods require the
moments of the polynomials, in particular \cite{sommariva2023low}
suggests a sophisticated algorithm for computing them on
bivariate domains bounded by B-spline curves.

The method that we suggest does not require any meshing,
and is also \emph{moment-free}, in the sense that
the moment computation problem is completely bypassed.
It relies on the discretization of a pair of linear differential
operators $\Lcal$ and $\Bcal$ satisfying the identity
\begin{equation}\label{intp}
\int_\Omega \Lcal u\dmu=\int_{\partial\Omega}\Bcal u \dsigma.
\end{equation}
Many identities of this form can be obtained from
the generalized Stokes theorem.
For example, a natural choice is $\Lcal=\Delta$, the Laplace
operator (or Laplace-Beltrami operator when $\Omega$
is a Riemannian manifold), and $\Bcal=\partial_\nu$,
the outward normal derivative at the boundary.
This pair, however, may lead to inaccurate
quadrature formulas on domains with nonsmooth boundary,
 as will be demonstrated in the paper.
Instead, we recommend $\Lcal=\diver$, the divergence operator,
and $\Bcal=\gamma_\nu$, the normal trace operator, as a general
purpose choice, which works well in all of our numerical tests
on Lipschitz domains in 2D and 3D with either smooth
or piecewise smooth boundary.

Given two functions $f:\Omega\to\R$ and
$g:\partial\Omega\to\R$, we look for a \emph{combined quadrature}
to approximate the difference of two integrals
\begin{equation} \label{cqf}
\int_{\Omega}f(x)\dmu(x)-\int_{\partial\Omega}g(x)\dsigma(x) 
\approx\sum_{i=1}^{N_Y}\muw_if(y_i)- \sum_{i=1}^{N_Z}\sigmav_ig(z_i),
\end{equation}
using two sets of quadrature nodes
$Y=\{y_1,\ldots,y_{N_Y}\}\subset\overline{\Omega}$ and
$Z=\{z_1,\ldots,z_{N_Z}\}\subset\partial \Omega$ and two weight
vectors $\muw =(\muw_i)_{i=1}^{N_Y}\in\R^{N_Y}$ and
$\sigmav =(\sigmav_i)_{i=1}^{N_Z}\in\R^{N_Z}$.
Clearly, \eqref{cqf} provides in particular a quadrature
formula for $\int_{\Omega}f\dmu$ with nodes in $Y$ and weight vector 
$\muw$ if we set $g\equiv 0$, and a
quadrature formula for $\int_{\partial\Omega}g\dsigma$ with nodes
in $Z$ and weight vector $\sigmav$ if we set $f\equiv 0$.
Assuming that the nodes in $Y$ and $Z$ are provided by the user,
or placed by a node generation algorithm \cite{suchde-point},
we propose a method for the computation of the weight
vectors $\muw$ and $\sigmav$. 

As in the classical approach based on the reproduction of moments
\eqref{exact}, we may employ a finite
dimensional space of functions $\Scal=\spann\{ s_1,\ldots, s_M\}$. 
However, we rely on the simultaneous approximation
of $f$ by $\Lcal s$ and $g$ by $\Bcal s$,
rather than on the direct approximation of $f$ by $s\in\Scal$.
Therefore, we require the exactness of the combined
formula \eqref{cqf} for all
pairs $f,g$ obtained as $f=\Lcal s$ and $g=\Bcal s$ from
the same $s\in\Scal$. By the assumptions on operators $\Lcal$
and $\Bcal$, the \emph{combined moment}
\[
\int_{\Omega}f(x)\dmu(x)
- \int_{\partial\Omega}g(x)\dsigma(x)
= \int_{\Omega} \Lcal s(x)\dmu(x)
- \int_{\partial\Omega} \Bcal s(x)\dsigma(x)
\]
is equal to zero, and so its value is trivially known
in advance, which is the key advantage of our moment-free
quadrature formulas over classical moment-fitting.
Imposing exactness of \eqref{cqf} for the pairs $f=\Lcal s_j$,
$g=\Bcal s_j$, $j=1,\ldots,M$, leads to
the homogeneous linear equations for the weights
$$
\sum_{i=1}^{N_Y}\muw_i\Lcal s_j(y_i)-
\sum_{i=1}^{N_Z}\sigmav_i\Bcal s_j(z_i)=0\quad 
\text{for all } j=1,\ldots,M.
$$
In addition, we require the exactness of \eqref{cqf} for a single
pair
of functions $\hat f,\hat g$ with known difference of the integrals 
$\int_\Omega\hat f\dmu$ and $\int_{\partial\Omega}\hat g\dsigma$, 
such that 
\begin{equation}\label{fghatcond}
\int_\Omega\hat f(x)\dmu(x)-\int_{\partial\Omega}\hat g(x)\dsigma(x)\ne0.
\end{equation}
In particular, we may use $\hat f\equiv 1$, $\hat g\equiv 0$ if
we know the measure of $\Omega$, or $\hat f\equiv 0$, $\hat
g\equiv 1$ if we know the boundary measure of $\partial\Omega$. 
If neither is known, then a purely moment-free approach
is to employ $\hat f\equiv 0$, $\hat g=\partial_\nu\Phi$,
with $\Phi$ being a fundamental solution
of the Laplace(-Beltrami) equation centered at a point in $\Omega$,
see Section~\ref{ssec:on-the-choice-of-fhat-and-ghat}.
In any case, we get the following linear system for
the weight vectors $\muw$ and $\sigmav$,
\begin{align}
\begin{cases}\hspace{-8pt}
\begin{split}\label{wqcond}
\sum_{i=1}^{N_Y} w_i \ell_{ij}
	- \sum_{i=1}^{N_Z} v_i b_{ij}
&= 0,\quad j=1,\ldots,M,\\
\sum_{i=1}^{N_Y}\muw_i\hat f(y_i)- \sum_{i=1}^{N_Z}\sigmav_i\hat g(z_i) 
&= \int_{\Omega}\hat f\dmu-\int_{\partial\Omega}\hat
g\dsigma,
\end{split}
\end{cases}
\end{align}
where $\ell_{ij}$ and $b_{ij}$ are the entries of the collocation matrices
\begin{equation}\label{collocLB}
L=\big(\Lcal s_j(y_i)\big)_{i=1,j=1}^{N_Y,M},\quad B=\big(\Bcal s_j(y_i)\big)_{i=1,j=1}^{N_Z,M}
\end{equation}
for the operators
$\Lcal,\Bcal$ with respect to the sets $Y$, $Z$ and the
space $\Scal$. If $\Omega$ is a closed manifold,
then $\partial\Omega=\emptyset$, and \eqref{wqcond}
simplifies in the sense that everything related to the
integral over $\partial\Omega$ disappears.

Moreover, we may move away from using any function space $\Scal$
and replace $L$ and $B$ by numerical differentiation matrices on
irregular nodes, as those used in the meshless finite difference
method, see for example~\cite{FFprimer15,JTAMB19}, which leads to
a purely meshless approach with full flexibility of arbitrarily
distributed degrees of freedom  associated with an additional
finite set $X=\{x_1,\ldots,x_{N_X}\}$ discretizing either
$\overline{\Omega}$ or some larger set $D\supset\overline{\Omega}$.
In this case  we again compute
$(w,v)$ by solving the linear system~\eqref{wqcond},
with the coefficients $\ell_{ij},b_{ij}$ of matrices $L,B$
obtained as the weights of suitable numerical differentiation formulas
\begin{equation}\label{ndf}
\Lcal u(y_i)\approx\sum_{k=1}^{N_X}\lambda_{ik}^Tu(x_k),
\quad i=1,\ldots, N_Y,
\qquad\Bcal u(z_i)\approx\sum_{k=1}^{N_X}\beta_{ik}^Tu(x_k),
\quad i=1,\ldots,N_Z,
\end{equation}
where
\begin{equation}\label{ndfLB}
L=\big(\ell_{ij}\big)_{i=1,j=1}^{N_Y,M}
=\big(\lambda_{ik}^T\big)_{i=1,k=1}^{N_Y,N_X},
\quad B=\big(b_{ij}\big)_{i=1,j=1}^{N_Z,M}
=\big(\beta_{ik}^T\big)_{i=1,k=1}^{N_Z,N_X}.
\end{equation}
Note that $\lambda_{ij}$ are scalars if $u$ is scalar-valued, but
in general they are vectors, for example when
$\Lcal$ is the divergence operator on a domain
in $\R^d$ or an embedded manifold, and thus $u$
is a vector field.

We choose the functional space $\Scal$ or the set of nodes $X$
such that the system \eqref{wqcond} is underdetermined,
and compute the combined quadrature weight vector $(\muw,\sigmav)$
as its solution with the smallest 2-norm,
\begin{align}
\begin{split}\label{LS}
\|\muw \|_2^2+\|\sigmav \|_2^2&\to\min\\
\text{subject to}\quad 
\begin{pmatrix}
L^T & -B^T \\ 
\hat{f}|_Y^T & -\hat{g}|_Z^T
\end{pmatrix}
\begin{pmatrix}
\muw  \\ 
\sigmav 
\end{pmatrix}
&= \begin{pmatrix}
0 \\
\int_\Omega \hat{f} \dmu -\int_{\partial\Omega} \hat{g} \dsigma
\end{pmatrix},
\end{split}
\end{align}
where 
$$
\hat{f}|_Y=\big(\hat{f}(y_i)\big)_{i=1}^{N_Y},\quad 
\hat{g}|_Z=\big(\hat{g}(z_i)\big)_{i=1}^{N_Z},$$
which is a linearly constrained least squares problem. Efficient
algorithms for this problem are available and can handle very large
node sets $Y,Z$ if the constraints are given by a sparse matrix
\cite{Bjorck96}.
In particular, we use direct methods based on
either the sparse QR decomposition of the system matrix,
or the sparse Cholesky decomposition of the normal equations
of the second kind associated with \eqref{LS}.
The two algorithms are provided by the same open
source library SuiteSparse under the submodules
SuiteSparseQR and CHOLMOD,
respectively~\cite{chen2008algorithm,davis-algorithm,SuiteSparse}.
The sparsity of the matrices
$L$ and $B$ is achieved by using a locally supported basis of
$\Scal$, such as tensor-product B-splines, or by choosing small sets of
influence in the numerical differentiation formulas \eqref{ndf},
with $\lambda_{ik}\ne 0$ and $\beta_{ik}\ne 0$ only for small
distances between $y_i,z_i$ and $x_k$.

This leads to a scheme that uniquely combines the following features.
\begin{itemize}

\item High order quadrature weights are provided for 
user-supplied nodes on %
Lipschitz domains,
with the only additional input of outward-pointing normals
at the boundary nodes.
Therefore the method is not tied to any specific representation
of the integration domain, e.g.\ parametric or implicit.

\item No meshing is needed, which makes this method particularly
well-suited for the meshless workflow in numerical computations
that attracts growing attention as an alternative to mesh-based
methods, because mesh generation remains challenging in more
complicated applications \cite{suchde-point}.

\item The scheme is free from the moment computation problem,
the typical bottleneck of meshless quadrature methods
on complex integration domains, especially in dimensions
higher than two.

\item Simple and easy to implement algorithms rely on a
discretization of a boundary value problem for the divergence
operator and a minimum-norm solution to an underdetermined sparse
linear system, for which  efficient linear algebra routines are
available.

\end{itemize}

We develop the details of both  the meshless finite difference
approach based on the numerical differentiation matrices
\eqref{ndfLB}, and the functional approach based on the
collocation matrices \eqref{collocLB}, and formulate two
practical algorithms for domains in $\R^d$ that use polyharmonic
kernel numerical differentiation (Algorithm \ref{algo:mfd}) and
collocation with tensor-product B-splines (Algorithm
\ref{algo:spline}), respectively.

Extensive numerical experiments demonstrate the effectiveness
of both algorithms, with the largest tests involving about
450k quadrature nodes, and convergence orders beyond $h^7$ for
both interior and boundary quadrature on several 2D and 3D domains,
including some with reentrant corners and edges.
In addition to SuiteSparse, our
MATLAB code relies on the numerical differentiation formulas
provided by mFDlab \cite{mFDlab}, and on several MATLAB
toolboxes (optimization, curve fitting, statistics).

The paper is organized as follows. Section~\ref{sec:qf} presents
the algorithms for the computation of the quadrature weights and
their theoretical background, while
Section~\ref{sec:numerical-tests}  is devoted to numerical
experiments. We start Section~\ref{sec:qf} by an abstract error
analysis that motivated our approach 
(Section~\ref{ssec:error-analysis-in-the-abstract-setting}), and
then describe in Section~\ref{ssec:algorithm} a general framework
for the development of quadrature formulas of the new type,
followed by the details of the options available for each of
its aspect, namely the choice of the functions $\fhat$ and
$\ghat$ satisfying \eqref{fghatcond} in 
Section~\ref{ssec:on-the-choice-of-fhat-and-ghat}, differential
operators $\Lcal$ and $\Bcal$ in 
Section~\ref{ssec:choice-of-operators-Lcal-and-Bcal}, methods for
numerical differentiation by either collocation or meshless finite
differences in  Section~\ref{ssec:on-the-choice-of-L-and-B}, and
the methods for solving the underdetermined linear system 
\eqref{wqcond} in 
Section~\ref{ssec:on-the-choice-of-minimization-norm}. Two
practical, ready to implement algorithms are presented in
Section~\ref{ssec:palgos}.

Section~\ref{ssec:validation-of-parameters} covers
the numerical validation of the parameters suggested in
Algorithms~\ref{algo:mfd} and \ref{algo:spline}, 
Section~\ref{ssec:conv} is devoted to testing the
empirical order of convergence of the methods,
and Section~\ref{ssec:comp} compares the performance of our
scheme on a solid torus and a 3D
domain defined implicitly by a surface of genus 3 to that of the
quadrature schemes provided by the open source packages Gmsh and
MFEM.
Finally, Section~\ref{sec:concl} gives a brief
conclusion and remarks about directions of future work.

\graphicspath{{./figures/}}

\section{Quadrature formulas}\label{sec:qf}

We begin by fixing notation.
The following setting, although abstract, lets us
develop our error analysis in full generality by allowing
integration on manifolds and on nonsmooth domains, and
by allowing functions with discontinuities
or other kinds of singularities.

Let $\Mcal$ be a $C^0$ manifold of dimension $d$,
and $\Omega\subset\Mcal$ a domain (that is, a connected
and open subset) with compact closure $\Omegabar$,
and let $\mu,\sigma$
be finite, strictly positive Borel measures defined
on $\Omegabar$ and $\partial\Omega$, respectively.
We suppose that $\mu(\partial\Omega) = 0$, so that
the space $L^1(\Omega)$ of absolutely integrable functions 
can be identified with $L^1(\Omegabar)$.

For any given set $Y$ of $N_Y$ scattered nodes in $\Omegabar$,
and, in the case of nonempty boundary $\partial\Omega\ne\emptyset$, 
any given set $Z$ of $N_Z$ scattered nodes in 
$\partial\Omega$, we seek quadrature weights
$\muw  \in \R^{N_Y}$ and $\sigmav  \in \R^{N_Z}$ such that
\begin{equation}\label{eq:qf}
\int_{\Omega} f(x) \dmux \approx \sum_{i=1}^{N_Y} \muw_i f(y_i)
\quad \text{and} \quad
\int_{\partial\Omega} g(x) \dsigmax
\approx \sum_{i=1}^{N_Z} \sigmav_i g(z_i)
\end{equation}
for all sufficiently regular integrands 
$f \colon \Omegabar \to \R$ and $g \colon \partial\Omega \to\R$.
To make the pointwise evaluation $f(y_i)$ and $g(z_i)$
a meaningful operation we assume that $f\in L^1_Y(\Omega)$ and
$g\in L^1_Z(\partial\Omega)$, where $L^1_Y(\Omega)$ and
$L^1_Z(\partial\Omega)$ are the subsets of
$L^1(\Omega)$ and $L^1(\partial\Omega)$ whose elements
(equivalence classes of functions) have at least one
representative that is continuous at each point of $Y$ and $Z$,
respectively.

We denote the two quadrature formulas by $(Y,w)$
and $(Z,v)$, and compute them \emph{simultaneously}.
Moreover, they arise as a \emph{combined quadrature}
\[
Q(f,g) \deq
\sum_{i=1}^{N_Y} \muw_i f(y_i)-\sum_{i=1}^{N_Z} \sigmav_i g(z_i)
\]
for the difference\footnote{Note that we could use the sum of the
two integrals instead of their difference, and respectively the
sum of the two quadrature formulas in the definition of the
combined quadrature, which would look more natural in a sense.
The difference, however, matches the usual form of the
compatibility condition~\eqref{eq:compatibility-of-L-and-B}
and emphasizes the "cancellation" nature of the
equation $I(f,g)=0$ that plays a crucial role in the method.}
of the two integrals
\[
I(f,g) \deq
\int_{\Omega} f(x) \dmux-\int_{\partial\Omega} g(x)\dsigmax.
\]
Let $\Lcal \colon \Dcal \to L^1(\Omega)$ and
$\Bcal \colon \Dcal \to L^1(\partial\Omega)$
be operators such that
\begin{equation} \label{eq:compatibility-of-L-and-B}
\int_\Omega \Lcal u \dmu
= \int_{\partial\Omega} \Bcal u \dsigma
\quad \text{for all $u \in \Dcal$,}
\end{equation}
with $\Dcal$ being a common domain of definition for the
two operators. Even though $\Dcal$ can in general be a set
of any nature, with the abstract theory presented in
Sections~\ref{ssec:error-analysis-in-the-abstract-setting}--\ref{ssec:on-the-choice-of-fhat-and-ghat}
applicable in this setting, we have in mind the situation
where $\Dcal$ is a linear space of functions  or vector fields on
$\Omegabar$ (or on a set containing it), $\Lcal$ and $\Bcal$
are both linear differential
operators, measures $\mu,\sigma$ are induced by a Riemannian
metric in $\Mcal$, and identity
\eqref{eq:compatibility-of-L-and-B} holds by the divergence
theorem or another corollary of the generalized Stokes theorem.
When $\Omega=\Mcal$ is a closed manifold with $\partial\Omega=\emptyset$, 
the measure $\sigma$ and the operator $\Bcal$ are
clearly irrelevant, and the right-hand
side in  \eqref{eq:compatibility-of-L-and-B}
is understood to be identically zero.

The underlying idea of the method may be roughly explained as follows.
The identity \eqref{eq:compatibility-of-L-and-B} implies that 
$$
I(f,g)=I(\Lcal u,\Bcal u)=0$$ 
as soon as $f=\Lcal u$ and $g=\Bcal u$ for the same function
$u\in \Dcal$. If $u$ is discretized by a vector $c\in\R^M$,
and matrices $L \in \R^{N_Y \times M}$
and $B \in \R^{N_Z \times M}$ are such that
$$
Lc\approx \Lcal u \restrict{Y}=f\restrict{Y},\quad 
Bc\approx \Bcal u \restrict{Z}=g\restrict{Z},$$
then 
$$
Q(f,g)
\approx \muw^T L c - \sigmav^T B c
= (\muw^T L - \sigmav^T B) \, c,
$$
and hence requiring that the weight vectors $\muw,\sigmav$
satisfy 
$$
\muw^TL-\sigmav^TB=0$$
implies 
\[
Q(f,g)\approx 0=I(f,g)\quad\text{for all $(f,g)$ such that } 
f=\Lcal u, \: g=\Bcal u \text{ for some } u \in \Dcal.
\]
In addition, we require from $\muw,\sigmav$ that 
$$
Q(\fhat,\ghat)=I(\fhat,\ghat)\quad\text{for a single pair $(\fhat,\ghat)$ 
with }I(\fhat,\ghat)\ne0.$$
As we will demonstrate, this combination of
conditions produces accurate quadrature
formulas $(Y,\muw )$ and $(Z,\sigmav )$ for appropriate choices of
the operators $\Lcal,\Bcal$ and numerical
differentiation matrices $L,B$.

We first make these considerations more precise in 
Section~\ref{ssec:error-analysis-in-the-abstract-setting}, then
formulate in \ref{ssec:algorithm} a general framework 
for obtaining quadrature formulas of this type, 
and explore in Sections~\ref{ssec:on-the-choice-of-fhat-and-ghat}--\ref{ssec:on-the-choice-of-minimization-norm}
the options available for each of its steps.
The final subsection \ref{ssec:palgos} presents two
specific algorithms that we recommend for practical
use on piecewise smooth Lipschitz domains in $\R^d$,
and which are extensively tested in Section~\ref{sec:numerical-tests}.

\subsection{Error analysis in an abstract setting}
\label{ssec:error-analysis-in-the-abstract-setting}
Throughout the paper we will use the standard notation for the $p$-norm
of a vector $x \in \R^N$, 
\[
\norm{x}_p=\Big( \sum_{i=1}^N \abs{x_i}^p \Big)^{1/p}
\quad\text{if $1 \leq p < \infty$},
\qquad
\norm{x}_\infty=\max_{i=1,\dots,N} \abs{x_i}.
\]

For any given pair of operators $\Lcal$ and $\Bcal$ and finite sets 
$Y\subset\Omegabar$ and $Z\subset\partial\Omega$, any
triple $(\Psi,L,B)$, where $\Psi \colon \Dcal \to \R^M$,
$L \in \R^{N_Y \times M}$ and $B \in \R^{N_Z \times M}$,
is said to be a \emph{numerical differentiation scheme}
with discretization operator $\Psi$ and
differentiation matrices $L,B$.
We denote by $\eps_L(u)$ and $\eps_B(u)$ the recovery error
with which the matrices $L$ and $B$ approximate the
pointwise values of $\Lcal u$ and $\Bcal u$
using the information about $u\in\Dcal$ contained in $\Psi(u)$:
\begin{align}
\label{eq:epsL}
\varepsilon_L(u)&:=\norm{\Lcal u \restrict{Y} - L\Psi(u)}_\infty=
\max_{i=1,\dots,N_Y}\abs{\Lcal u (y_i) - \bigl(L\Psi(u)\bigr)_i},\\
\label{eq:epsB}
\varepsilon_B(u)&:=\norm{\Bcal u \restrict{Z} - B\Psi(u)}_\infty=
\max_{i=1,\dots,N_Z}\abs{\Bcal u (z_i) - \bigl( B\Psi(u)\bigr)_i}.
\end{align}
The vector $\Psi(u) \in \R^M$ 
may correspond to pointwise values of $u$ over a scattered
set of nodes, coefficients of splines or radial basis functions 
that approximate $u$,
Fourier coefficients, or other kinds of discrete information about $u$.
Tools of approximation theory may then be used to define
matrices $L$ and $B$ that accurately reconstruct
the pointwise values of $\Lcal u$ and $\Bcal u$
from a given vector $\Psi(u)$.

Our approach is motivated by the following proposition that
estimates the combined quadrature error
\[
\delta(f,g) \deq I(f,g)-Q(f,g).
\]

\begin{proposition}[Error bound for combined quadrature]
\label{prop:joint-error-estimate}
Let $(\fhat,\ghat) \in L^1_Y(\Omega) \times L^1_Z(\partial\Omega)$
be a fixed pair of functions such that
\[
I(\fhat,\ghat)%
\neq 0.
\]
Given any pair $(f,g) \in L^1_Y(\Omega) \times L^1_Z(\partial\Omega)$, 
let $\Ucal_{f,g} \subset \Dcal$ be the set of solutions $u$
to the boundary value problem
\begin{equation} \label{eq:bvp-joint-error-estimate}
\begin{cases}
\Lcal u = f - \alpha \fhat & \text{in $\Omega$} \\
\Bcal u = g - \alpha \ghat & \text{on $\partial\Omega$},
\end{cases}
\end{equation}
where the coefficient $\alpha \in \R$ is chosen so that 
\eqref{eq:bvp-joint-error-estimate} satisfies
the compatibility condition \eqref{eq:compatibility-of-L-and-B}:
\[
\alpha=I(f,g)/I(\fhat,\ghat).
\]
Then for any pair of quadrature formulas \eqref{eq:qf}, %
the following estimate of the combined error holds:

\begin{equation} \label{eq:joint-error-estimate}
\abs{\delta(f,g)} \leq \abs{\alpha} \hat{\varepsilon}
+ \inf_{u \in \Ucal_{f,g}} \Bigl\{
	\norm{\muw }_1 \varepsilon_L(u)
	+ \norm{\sigmav }_1 \varepsilon_B(u)
	+ \norm{\Psi(u)}_\infty \norm{L^T\muw -B^T\sigmav }_1
\Bigr\},
\end{equation}
where $\hat\eps:=|\delta(\fhat,\ghat)|$.
\end{proposition}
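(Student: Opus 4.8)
The plan is to start from the definition of the combined quadrature error and insert the boundary value problem that defines $\Ucal_{f,g}$. Given an arbitrary solution $u \in \Ucal_{f,g}$, I would write $f = \Lcal u + \alpha\fhat$ and $g = \Bcal u + \alpha\ghat$, and use the linearity of both $I(\cdot,\cdot)$ and $Q(\cdot,\cdot)$ in the pair $(f,g)$ to split
\[
\delta(f,g) = I(f,g) - Q(f,g)
= \bigl[I(\Lcal u,\Bcal u) - Q(\Lcal u,\Bcal u)\bigr]
+ \alpha\bigl[I(\fhat,\ghat) - Q(\fhat,\ghat)\bigr].
\]
The second bracket is exactly $\alpha\,\delta(\fhat,\ghat)$, whose absolute value is $\abs{\alpha}\hat\eps$, giving the first term of the bound. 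For the first bracket, the compatibility identity \eqref{eq:compatibility-of-L-and-B} gives $I(\Lcal u,\Bcal u)=0$, so it remains to bound $\abs{Q(\Lcal u,\Bcal u)}$.

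Next I would expand $Q(\Lcal u,\Bcal u) = \muw^T (\Lcal u\restrict{Y}) - \sigmav^T(\Bcal u\restrict{Z})$ and compare it with the ``discrete'' version $\muw^T L\Psi(u) - \sigmav^T B\Psi(u) = (L^T\muw - B^T\sigmav)^T\Psi(u)$. The difference is
\[
Q(\Lcal u,\Bcal u)
= \muw^T\bigl(\Lcal u\restrict{Y} - L\Psi(u)\bigr)
- \sigmav^T\bigl(\Bcal u\restrict{Z} - B\Psi(u)\bigr)
+ (L^T\muw - B^T\sigmav)^T\Psi(u).
\]
Now apply Hölder's inequality termwise: the first term is bounded by $\norm{\muw}_1\norm{\Lcal u\restrict{Y} - L\Psi(u)}_\infty = \norm{\muw}_1\,\eps_L(u)$ by the definition \eqref{eq:epsL}, the second similarly by $\norm{\sigmav}_1\,\eps_B(u)$ via \eqref{eq:epsB}, and the third by $\norm{L^T\muw - B^T\sigmav}_1\norm{\Psi(u)}_\infty$. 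Collecting these three terms yields the quantity inside the infimum in \eqref{eq:joint-error-estimate}, and since $u \in \Ucal_{f,g}$ was arbitrary, taking the infimum over $\Ucal_{f,g}$ finishes the estimate.

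I do not expect a serious obstacle here; the argument is essentially a single algebraic decomposition followed by a standard $\ell^1$--$\ell^\infty$ duality bound applied three times. The only points demanding a little care are: checking that the pointwise evaluations $\Lcal u\restrict{Y}$ and $\Bcal u\restrict{Z}$ are well defined, which is guaranteed by the hypotheses $f,\fhat\in L^1_Y(\Omega)$ and $g,\ghat\in L^1_Z(\partial\Omega)$ together with $\alpha\fhat = f - \Lcal u$ (so that $\Lcal u$ inherits a representative continuous on $Y$), and likewise on $Z$; confirming that $\Ucal_{f,g}\neq\emptyset$ is not needed because the infimum over the empty set is $+\infty$ and the inequality holds vacuously; and noting that the choice $\alpha = I(f,g)/I(\fhat,\ghat)$ is precisely what makes $I(f-\alpha\fhat, g-\alpha\ghat)=0$, so that \eqref{eq:bvp-joint-error-estimate} is consistent with \eqref{eq:compatibility-of-L-and-B} whenever a solution exists. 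If $\partial\Omega=\emptyset$, all terms involving $\sigmav$, $B$, $\ghat$ drop out and the same proof goes through verbatim.
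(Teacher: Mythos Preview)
Your proposal is correct and follows essentially the same argument as the paper: split $\delta(f,g)=\delta(\Lcal u,\Bcal u)+\alpha\,\delta(\fhat,\ghat)$ by linearity, use the compatibility condition to reduce $\delta(\Lcal u,\Bcal u)$ to $-Q(\Lcal u,\Bcal u)$, then add and subtract $L\Psi(u)$, $B\Psi(u)$ and apply the $\ell^1$--$\ell^\infty$ bound three times before taking the infimum. Your side remarks on well-definedness of pointwise evaluations and the empty $\Ucal_{f,g}$ case are sound and go slightly beyond what the paper spells out.
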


\begin{proof}
For all $u \in \Ucal_{f,g}$, by the linearity of $\delta$,
\[
\abs{\delta(f,g)}
= \abs{\delta(f - \alpha \fhat, g - \alpha \ghat)
	+ \delta(\alpha\fhat,\alpha\ghat)}
= \abs{\delta(\Lcal u, \Bcal u) + \alpha \delta(\fhat,\ghat)}
\leq \abs{\delta(\Lcal u, \Bcal u)} + \abs{\alpha} \hat{\varepsilon}.
\]
Moreover, by the compatibility condition
\eqref{eq:compatibility-of-L-and-B} and by the definitions
of $\varepsilon_L(u)$ and $\varepsilon_B(u)$,
\begin{align*}
\abs{\delta(\Lcal u, \Bcal u)}
&= \absB{
	\sum_{i=1}^{N_Y} \muw_i (\Lcal u)(y_i)
	- \sum_{i=1}^{N_Z} \sigmav_i (\Bcal u)(z_i)} \\
&= \absB{
	\sum_{i=1}^{N_Y} \muw_i (\Lcal u)(y_i)
	\pm \sum_{i=1}^{N_Y} \muw_i \bigl( L\Psi(u) \bigr)_i
	\pm \sum_{i=1}^{N_Z} \sigmav_i \bigl( B\Psi(u) \bigr)_i
	- \sum_{i=1}^{N_Z} \sigmav_i (\Bcal u)(z_i)} \\
&\leq \norm{\muw }_1 \varepsilon_L(u)
	+ \abs{\muw ^T L \Psi(u) - \sigmav ^T B \Psi(u)}
	+ \norm{v}_1 \varepsilon_B(u) \\
&\leq \norm{\muw }_1 \varepsilon_L(u)
	+ \norm{\sigmav }_1 \varepsilon_B(u)
	+ \norm{\Psi(u)}_\infty \norm{L^T\muw -B^T\sigmav }_1.
\end{align*}
Taking the infimum over $u \in \Ucal_{f,g}$ completes the proof.
\end{proof}

By choosing either $f\equiv0$ or $g\equiv0$, we 
obtain separate error bounds for both quadrature formulas
\eqref{eq:qf}.

\begin{corollary}[Separate error bounds]
\label{cor:split-error-estimates}
By the previous proposition, the special cases
\[
\delta(f,0)
= \int_\Omega f \dmu - \sum_{i=1}^{N_Y} \muw_i f(y_i),
\qquad \delta(0,g)
= -\int_{\partial\Omega} g \dsigma + \sum_{i=1}^{N_Z} \sigmav_i g(z_i),
\]
immediately imply the following error bounds for
quadrature formulas $(Y,\muw )$ and $(Z,\sigmav )$,
\begin{align}
\label{eq:split-error-estimate-f}
\absB{\int_\Omega f \dmu - \sum_{i=1}^{N_Y} \muw_i f(y_i)}
&\leq \abs{\alpha} \hat{\varepsilon}
+ \inf_{u \in \Ucal_f} \Bigl\{
	\norm{\muw }_1 \varepsilon_L(u)
	+ \norm{\sigmav }_1 \varepsilon_B(u)
	+ \norm{\Psi(u)}_\infty \norm{L^T\muw -B^T\sigmav }_1
\Bigr\}, \\
\label{eq:split-error-estimate-g}
\absB{\int_{\partial\Omega} g \dsigma - \sum_{i=1}^{N_Z} \sigmav_i g(z_i)}
&\leq \abs{\beta} \hat{\varepsilon}
+ \inf_{u \in \Ucal_g} \Bigl\{
	\norm{\muw }_1 \varepsilon_L(u)
	+ \norm{\sigmav }_1 \varepsilon_B(u)
	+ \norm{\Psi(u)}_\infty \norm{L^T\muw -B^T\sigmav }_1
\Bigr\},
\end{align}
with $\Ucal_f$ and $\Ucal_g$ being the sets of solutions to the
boundary value problems
\begin{equation} \label{eq:bvp-split-error-estimates}
\begin{cases}
\Lcal u = f - \alpha \fhat & \text{in $\Omega$} \\
\Bcal u =   - \alpha \ghat & \text{on $\partial\Omega$},
\end{cases}
\qquad\text{and}\qquad
\begin{cases}
\Lcal u =   - \beta \fhat & \text{in $\Omega$} \\
\Bcal u = g - \beta \ghat & \text{on $\partial\Omega$},
\end{cases}
\end{equation}
respectively, where  $\alpha=I(f,0)/I(\fhat,\ghat)$ and
$\beta=I(0,g)/I(\fhat,\ghat)$
are again determined
by the compatibility condition
\eqref{eq:compatibility-of-L-and-B}.
\end{corollary}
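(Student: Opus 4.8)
The plan is to obtain this corollary as a direct specialization of Proposition~\ref{prop:joint-error-estimate}, applied twice: once to the pair $(f,0)$ and once to the pair $(0,g)$. In each case I would simply read off what the scalar, the boundary value problem, and the solution set of the proposition become under the substitution, and then rewrite the resulting bound on $\abs{\delta(\cdot,\cdot)}$ using the elementary identities for $\delta(f,0)$ and $\delta(0,g)$ that are already recorded in the statement.

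For \eqref{eq:split-error-estimate-f} I would invoke the proposition with the second slot set to the zero function. This is legitimate for every $f \in L^1_Y(\Omega)$, since $0 \in L^1_Z(\partial\Omega)$ and the proposition imposes no condition on the pair fed into its first two arguments beyond the global requirement $I(\fhat,\ghat) \neq 0$, which is untouched. The scalar prescribed by the proposition then reads $\alpha = I(f,0)/I(\fhat,\ghat)$, the boundary value problem \eqref{eq:bvp-joint-error-estimate} specializes to the left-hand system of \eqref{eq:bvp-split-error-estimates}, and its solution set is exactly $\Ucal_f = \Ucal_{f,0}$. Hence the proposition bounds $\abs{\delta(f,0)}$ by the right-hand side of \eqref{eq:split-error-estimate-f}, and it remains only to note that $\delta(f,0) = I(f,0) - Q(f,0) = \int_\Omega f \dmu - \sum_{i=1}^{N_Y} \muw_i f(y_i)$, so that $\abs{\delta(f,0)}$ is literally the left-hand side of \eqref{eq:split-error-estimate-f}.

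For \eqref{eq:split-error-estimate-g} I would run the symmetric argument with the first slot set to zero: now the scalar is $\beta = I(0,g)/I(\fhat,\ghat)$, the boundary value problem becomes the right-hand system of \eqref{eq:bvp-split-error-estimates}, and its solution set is $\Ucal_g = \Ucal_{0,g}$. The one point that deserves a moment of attention — and essentially the only thing one can get wrong in an otherwise mechanical argument — is the sign coming from the minus in the definitions of $I$ and $Q$: here $\delta(0,g) = I(0,g) - Q(0,g) = -\int_{\partial\Omega} g \dsigma + \sum_{i=1}^{N_Z} \sigmav_i g(z_i)$, so that $\abs{\delta(0,g)} = \absB{\int_{\partial\Omega} g \dsigma - \sum_{i=1}^{N_Z} \sigmav_i g(z_i)}$, which is again exactly the claimed left-hand side. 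I do not expect any genuine obstacle beyond this bookkeeping, since the corollary carries no hypotheses not already present in the proposition.
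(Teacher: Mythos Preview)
Your proposal is correct and matches the paper's approach exactly: the corollary is stated there as an immediate consequence of Proposition~\ref{prop:joint-error-estimate} applied to the pairs $(f,0)$ and $(0,g)$, with no separate proof given. Your identification of the specialized scalars, boundary value problems, and the sign bookkeeping for $\delta(0,g)$ are all precisely what is needed.
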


The case of a closed manifold simplifies to just one quadrature
formula.

\begin{corollary}[Manifold without boundary]\label{rem:noboundary}
If $\partial\Omega = \emptyset$, then the argumentation of
Proposition \ref{prop:joint-error-estimate}
leads to the following error estimate for the quadrature formula $(Y,\muw )$,
\begin{equation*}
\absB{\int_\Omega f \dmu - \sum_{i=1}^{N_Y} \muw_i f(y_i)} 
\leq \abs{\alpha} \hat{\varepsilon}
+ \inf_{u \in \Ucal_f} \Bigl\{
	\norm{\muw }_1 \varepsilon_L(u)
	+ \norm{\Psi(u)}_\infty \norm{L^T\muw }_1
\Bigr\},
\end{equation*}
where $\Ucal_f$ is the set of solutions to the
equation $\Lcal u = f - \alpha \fhat$ in $\Omega$, and
$\alpha=\int_\Omega f \dmu/\int_\Omega \fhat \dmu$, with
$\fhat \in L_Y^1(\Omega)$ such that $\int_\Omega \fhat \dmu\ne0$.
\end{corollary}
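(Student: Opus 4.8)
The plan is to rerun the proof of Proposition~\ref{prop:joint-error-estimate} after discarding every quantity tied to the now-empty boundary. Concretely, I would set $N_Z=0$, so that $\sigmav$ is the empty vector, $B$ is the $0\times M$ matrix, the summand $\norm{\sigmav}_1\varepsilon_B(u)$ is absent, $\norm{L^T\muw-B^T\sigmav}_1$ becomes $\norm{L^T\muw}_1$, and the combined quantities degenerate to $I(f,0)=\int_\Omega f\dmu$, $Q(f,0)=\sum_{i=1}^{N_Y}\muw_i f(y_i)$ and $\delta(f,0)=I(f,0)-Q(f,0)$. The compatibility condition \eqref{eq:compatibility-of-L-and-B} now reads $\int_\Omega\Lcal u\dmu=0$ for all $u\in\Dcal$; since $\int_\Omega\fhat\dmu\ne0$, the scalar $\alpha=\int_\Omega f\dmu/\int_\Omega\fhat\dmu$ is precisely the value making $f-\alpha\fhat$ compatible, i.e.\ $\int_\Omega(f-\alpha\fhat)\dmu=0$. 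Hence the boundary value problem \eqref{eq:bvp-joint-error-estimate} collapses to the single equation $\Lcal u=f-\alpha\fhat$ in $\Omega$, whose solution set in $\Dcal$ is $\Ucal_f$, and $\hat\varepsilon=|\delta(\fhat,0)|$.

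I would then carry out the two estimates of the proof of the proposition. Fix any $u\in\Ucal_f$. By linearity of $\delta$ and $f-\alpha\fhat=\Lcal u$,
\[
|\delta(f,0)|=\bigl|\delta(\Lcal u,0)+\alpha\,\delta(\fhat,0)\bigr|\le|\delta(\Lcal u,0)|+|\alpha|\,\hat\varepsilon .
\]
Since $\int_\Omega\Lcal u\dmu=0$ we have $\delta(\Lcal u,0)=-\sum_{i=1}^{N_Y}\muw_i(\Lcal u)(y_i)$, so adding and subtracting $\muw^TL\Psi(u)$ gives
\begin{align*}
|\delta(\Lcal u,0)|
&=\Bigl|\sum_{i=1}^{N_Y}\muw_i\bigl((\Lcal u)(y_i)-(L\Psi(u))_i\bigr)+\muw^TL\Psi(u)\Bigr| \\
&\le\norm{\muw}_1\,\varepsilon_L(u)+\bigl|\muw^TL\Psi(u)\bigr|
\le\norm{\muw}_1\,\varepsilon_L(u)+\norm{\Psi(u)}_\infty\,\norm{L^T\muw}_1 ,
\end{align*}
the first term being controlled through the definition \eqref{eq:epsL} of $\varepsilon_L(u)$ and the last step being H\"older's inequality applied to $\muw^TL\Psi(u)=(L^T\muw)^T\Psi(u)$. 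Combining the two displays and taking the infimum over $u\in\Ucal_f$ gives exactly the stated bound.

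There is essentially no obstacle here; the content is confined to bookkeeping for the degenerate case. The points I would flag are the conventions that make the reduction clean---namely that $\varepsilon_B(u)$ and every $\sigmav$-dependent term simply vanish when $N_Z=0$---together with the observation that the estimate is vacuous (right-hand side $+\infty$) exactly when $\Ucal_f=\emptyset$, so it carries content only for integrands $f$ for which the reduced equation $\Lcal u=f-\alpha\fhat$ is solvable in $\Dcal$; this feature is inherited verbatim from Proposition~\ref{prop:joint-error-estimate} and Corollary~\ref{cor:split-error-estimates}. Finally, the hypotheses $\fhat\in L^1_Y(\Omega)$ and $f\in L^1_Y(\Omega)$ are exactly what make the nodal values $\fhat(y_i)$ and $f(y_i)$ meaningful, as in the proposition, and no analytic ingredient beyond those already present in the proof of Proposition~\ref{prop:joint-error-estimate} is required.
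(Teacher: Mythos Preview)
Your proposal is correct and is precisely the approach the paper indicates: the corollary has no separate proof in the paper beyond the phrase ``the argumentation of Proposition~\ref{prop:joint-error-estimate} leads to'', and you have faithfully specialized that argument by discarding all boundary terms. Your bookkeeping for the degenerate case $N_Z=0$ is accurate, and the two displayed estimates mirror the two steps of the proposition's proof exactly.
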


\subsection{A general framework}
\label{ssec:algorithm}

It is clear from Proposition \ref{prop:joint-error-estimate} that 
the error $|\delta(f,g)|$ of the combined quadrature is small if 
$$
\abs{\alpha} \hat\eps,\; \eps_L(u),\; \eps_B(u)\text{ and }
\norm{L^T\muw -B^T\sigmav }_1\text{ are small}$$
and 
$$
\norm{\muw }_1,\; \norm{\sigmav }_1 \text{ and }
\norm{\Psi(u)}_\infty\text{ are bounded}.
$$
Moreover, if we choose the weight vectors $\muw$ and $\sigmav$
such that $L^T\muw -B^T\sigmav =0$, then the size of
$\norm{\Psi(u)}_\infty$ is irrelevant. We may
also require that $\hat\eps=|\delta(\fhat,\ghat)|=0$,
which is just another linear equation for $\muw$ and $\sigmav$
to satisfy. 

Assuming that $\eps_L(u)$ and $\eps_B(u)$
are small thanks to (a) the smoothness of $u$ enforced for smooth
$f,g,\fhat,\ghat$ by
the appropriate choice of the operators $\Lcal,\Bcal$, and (b)
the choice of the numerical differentiation scheme, we may use any
remaining degrees of freedom in  $\muw$ and $\sigmav$
in order to minimize the stability constants
$\norm{\muw }_1$ and $\norm{\sigmav }_1$. 
We may minimize the joint 1-norm $\norm{\muw }_1 + \norm{\sigmav }_1$,
or some other norm $\norm{(\muw ,\sigmav )}_{\sharp}$ of the
combined weight vector $(\muw ,\sigmav )$
that may be preferable for computational or other reasons.
This considerations lead us to the following framework
for generating accurate quadrature formulas. %

\begin{algotheorem} \label{algo:abstract-algorithm}
Given $\Omega$, $\mu$, $\sigma$, $Y$, $Z$ as introduced in
Section~\ref{ssec:error-analysis-in-the-abstract-setting}, 
compute quadrature weights $\muw ^* \in \R^{N_Y}$
and $\sigmav ^* \in \R^{N_Z}$ as follows:

\begin{enumerate}

\item Choose auxiliary functions $\fhat \in L^1_Y(\Omega)$
and $\ghat \in L^1_Z(\partial\Omega)$ such that
\[
\int_\Omega \fhat \dmu
- \int_{\partial\Omega} \ghat \dsigma \neq 0. 
\]

\item Choose operators $\Lcal$ and $\Bcal$ such that
\begin{equation*}
\int_\Omega \Lcal u \dmu
= \int_{\partial\Omega} \Bcal u \dsigma
\quad \text{for all $u \in \Dcal$,}
\end{equation*}
where the set $\Dcal$ contains at least one solution 
of the boundary value problem
\eqref{eq:bvp-joint-error-estimate} for each pair of functions $f \in L^1_Y(\Omega)$
and $g \in L^1_Z(\partial\Omega)$
whose integrals we wish to approximate using
formulas $(Y,\muw ^*)$ and $(Z,\sigmav ^*)$.

\item Choose a numerical differentiation scheme $(\Psi,L,B)$
for the linear reconstruction of the pointwise values
$\Lcal u \restrict{Y}$ and $\Bcal u \restrict{Z}$
from the vector $\Psi(u)$ in the form of matrix-vector
products $L\Psi(u)$ and $B\Psi(u)$.

\item Assemble the non-homogeneous linear system
with unknowns $(\muw ,\sigmav )$
\begin{equation} \label{eq:linear-system-fhat-ghat}
L^T \muw  - B^T \sigmav  = 0, \qquad
\sum_{i=1}^{N_Y} \muw_i \fhat(y_i)
- \sum_{i=1}^{N_Z} \sigmav_i \ghat(z_i)
= \int_\Omega \fhat \dmu
- \int_{\partial\Omega} \ghat \dsigma,
\end{equation}
which can be written more compactly as %
\begin{equation} \label{eq:ls}
A x = b, \quad\text{ with}\quad A = \begin{pmatrix}
L^T & -B^T \\ 
\fhat|_Y^T & -\ghat|_Z^T
\end{pmatrix}, \quad
x = \begin{pmatrix}
\muw  \\ 
\sigmav 
\end{pmatrix}, \quad
b = \begin{pmatrix}
0 \\[0.5ex]
\int_\Omega \fhat \dmu - \int_{\partial\Omega} \ghat \dsigma
\end{pmatrix}.
\end{equation}

\item Compute a solution $(\muw ^*,\sigmav ^*)$ of
\eqref{eq:linear-system-fhat-ghat} that minimizes
some norm $\norm{(\muw ,\sigmav )}_{\sharp} \colon
\R^{N_Y+N_Z} \to \R_+$ defined on the space of
all possible weights $(\muw ,\sigmav )$.
\end{enumerate}
\end{algotheorem}

The algorithm terminates successfully as long as the linear system
\eqref{eq:linear-system-fhat-ghat} is consistent,
in which case Corollary~\ref{cor:split-error-estimates}
implies that the quadrature formulas $(Y,\muw ^*)$
and $(Z,\sigmav ^*)$ satisfy the error bounds
\begin{align}
\label{eq:fba}
\absB{\int_\Omega f \dmu - \sum_{i=1}^{N_Y} \muw_i^* f(y_i)}
&\leq \inf_{u \in \Ucal_f} \Bigl\{
	\norm{\muw ^*}_1 \varepsilon_L(u)
	+ \norm{\sigmav ^*}_1 \varepsilon_B(u)
\Bigr\}, \\
\label{eq:gba}
\absB{\int_{\partial\Omega} g \dsigma - \sum_{i=1}^{N_Z} \sigmav_i^* g(z_i)}
&\leq \inf_{u \in \Ucal_g} \Bigl\{
	\norm{\muw ^*}_1 \varepsilon_L(u)
	+ \norm{\sigmav ^*}_1 \varepsilon_B(u)
\Bigr\}.
\end{align}

The consistency of \eqref{eq:linear-system-fhat-ghat} may be
characterized in terms of a certain
discrete incompatibility condition as follows.
Recall that $\fhat$ and $\ghat$ satisfy $I(\fhat,\ghat) \neq 0$
and hence in view of \eqref{eq:compatibility-of-L-and-B} 
are incompatible as the right hand sides of the boundary value
problem for the operators $\Lcal$ and $\Bcal$, that is, the problem
\begin{equation} \label{eq:incompatibility}
\begin{cases}
\Lcal u = \hat{f}, \\
\Bcal u = \hat{g},
\end{cases}
\end{equation}
is not solvable for any $u\in\Dcal$.

\begin{proposition}\label{prop:discrete-compatibility-conditions}
The linear system $Ax=b$ given by \eqref{eq:ls} is
consistent if and only if the following \emph{discrete
incompatibility condition} holds: the linear system
\begin{equation} \label{eq:discrete-compatibility-conditions}
\begin{cases}
L c = \hat{f}|_Y, \\
B c = \hat{g}|_Z,
\end{cases}
\end{equation}
is inconsistent, i.e.\ has no solution $c \in \R^M$.
Moreover, if the linear system \eqref{eq:ls} is
consistent, then its solution $(\muw^*,\sigmav^*)$  
that minimizes the norm $\norm{(\muw ,\sigmav )}_{\sharp}$
satisfies 
\begin{equation}\label{dualstab}
\norm{(\muw^*,\sigmav^*)}_{\sharp} 
=|I(\fhat,\ghat)|\Big(\inf_{c\in\R^M}
\Big\|\begin{pmatrix}	Lc - \fhat|_Y\\ 	
Bc -\ghat|_Z\end{pmatrix}\Big\|_\sharp'\Big)^{-1},
\end{equation}
where $\|\cdot\|_\sharp'$ denotes the norm on $\R^{N_Y+N_Z}$
dual to $\|\cdot\|_\sharp$.
\end{proposition}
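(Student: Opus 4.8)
The plan is to establish the two assertions separately, each by elementary finite-dimensional duality applied to the system $Ax=b$ in \eqref{eq:ls}. Observe first that $A\in\R^{(M+1)\times(N_Y+N_Z)}$, so $A^T$ maps $\R^M\times\R$ into $\R^{N_Y+N_Z}$ via $A^T(c,t)=(Lc+t\,\fhat|_Y,\ -Bc-t\,\ghat|_Z)$, while $\langle(c,t),b\rangle=t\,I(\fhat,\ghat)$.

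For the consistency claim I would use the standard fact that $Ax=b$ is solvable if and only if $b$ is orthogonal to $\Ker A^T$. By the formula above this means $t\,I(\fhat,\ghat)=0$ for every $(c,t)$ with $Lc=-t\,\fhat|_Y$ and $Bc=-t\,\ghat|_Z$; since $I(\fhat,\ghat)\ne0$, it is equivalent to the statement that every such pair has $t=0$. A pair with $t\ne0$ exists precisely when \eqref{eq:discrete-compatibility-conditions} has a solution, since $c'=-c/t$ solves \eqref{eq:discrete-compatibility-conditions} in one direction and $(c,t)=(-c',1)\in\Ker A^T$ with $t\ne0$ in the other. Hence $Ax=b$ is consistent exactly when \eqref{eq:discrete-compatibility-conditions} is inconsistent, which is the claimed discrete incompatibility condition.

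For the norm identity \eqref{dualstab} I would invoke Lagrangian (equivalently, linear-programming-type) duality for $\min\{\norm{x}_\sharp : Ax=b\}$. By the first part the feasible set is a nonempty affine subspace, so $\norm{\cdot}_\sharp$ attains its minimum on it, equal to $\norm{(\muw^*,\sigmav^*)}_\sharp$, and strong duality yields $\norm{(\muw^*,\sigmav^*)}_\sharp=\max\{\langle\lambda,b\rangle : \norm{A^T\lambda}_\sharp'\le1\}$; this can also be obtained directly from the Hahn--Banach separation theorem, using that a linear functional bounded on the feasible affine subspace necessarily lies in $\Img A^T$. Writing $\lambda=(\mu,\tau)$, the dual objective equals $\tau\,I(\fhat,\ghat)$, and since $b\ne0$ the optimal value is strictly positive, which forces $\tau\ne0$ at the optimum. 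The substitution $\mu=-\tau c$, a bijection of $\R^M$ for fixed $\tau\ne0$, gives $A^T\lambda=\tau\,(\fhat|_Y-Lc,\ Bc-\ghat|_Z)$, so for fixed $\tau$ the dual constraint is satisfiable for some $\mu$ iff $|\tau|\,\kappa\le1$, where $\kappa:=\inf_{c\in\R^M}\norm{(\fhat|_Y-Lc,\ Bc-\ghat|_Z)}_\sharp'$; maximizing $\tau\,I(\fhat,\ghat)$ then gives the value $|I(\fhat,\ghat)|/\kappa$. The quantity $\kappa$ is the $\norm{\cdot}_\sharp'$-distance from $(\fhat|_Y,\ghat|_Z)$ to the subspace $\{(Lc,Bc):c\in\R^M\}$, and it is strictly positive precisely because consistency of $Ax=b$ rules out $(Lc,Bc)=(\fhat|_Y,\ghat|_Z)$. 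For the norms used in the algorithm --- in particular the Euclidean norm of \eqref{LS} --- which are unchanged under negating individual blocks, one has $\kappa=\inf_c\norm{(Lc-\fhat|_Y,\ Bc-\ghat|_Z)}_\sharp'$, and \eqref{dualstab} follows.

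The kernel computation and the reparametrization are routine; the step requiring care is strong duality --- the absence of a duality gap together with attainment of the dual optimum. I expect this to be the main obstacle, and would settle it either by citing the standard no-gap theorem for convex programs with only affine constraints and finite primal value, or by a direct optimality-condition argument: a minimizer $x^*\ne0$ of $\norm{\cdot}_\sharp$ over the feasible set admits a subgradient $\xi$ of $\norm{\cdot}_\sharp$ at $x^*$ with $\norm{\xi}_\sharp'=1$, $\langle\xi,x^*\rangle=\norm{x^*}_\sharp$ and $\xi\in\Img A^T$, so writing $\xi=A^T\lambda$ exhibits a dual feasible point with $\langle\lambda,b\rangle=\langle\xi,x^*\rangle=\norm{x^*}_\sharp$, closing the gap.
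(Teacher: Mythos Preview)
Your proof is correct and follows essentially the same route as the paper: the consistency part uses $\Img(A)^\perp=\Ker(A^T)$ exactly as you do, and for the norm identity the paper invokes a cited duality lemma yielding $\inf\{\|x\|_\sharp:Ax=b\}=\sup\{b^Ty:\|A^Ty\|_\sharp'\le1\}$ and then performs the same reparametrization $\tilde c=-c/\lambda$. Your self-contained justification of strong duality via subgradients, and your explicit remark that matching the exact sign pattern in \eqref{dualstab} uses block-sign invariance of $\|\cdot\|_\sharp'$, are more careful than the paper's treatment but not different in substance.
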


\begin{proof}
The characterization essentially follows
from the identity $\Img(A)^\perp = \Ker(A^T)$
and the usual assumptions on $(\fhat,\ghat)$.
We can reason as follows:
\begin{align*}
b \in \Img(A)
&\;\;\Leftrightarrow\;\;
	\begin{pmatrix}
	0 \\ 
	I(\hat{f},\hat{g}) 
	\end{pmatrix} \in \Ker(A^T)^\perp \\
&\;\;\Leftrightarrow\;\;
	\Big(\begin{matrix}
	0 & 
	I(\hat{f},\hat{g})
	\end{matrix}\Big)\!
	\begin{pmatrix}
	c \\
	\lambda
	\end{pmatrix}
	= 0 \quad \text{for all
	$\begin{pmatrix}
	c \\
	\lambda
	\end{pmatrix} \in \Ker(A^T)$} \\
&\;\;\Leftrightarrow\;\;
	\lambda	= 0 \quad \text{for all
	$\begin{pmatrix}
	c \\
	\lambda
	\end{pmatrix} \in \Ker(A^T)$}. %
\end{align*}
The last statement in the chain is equivalent
to saying that $\lambda = 0$ for all $c\in\R^M$ and $\lambda\in\R$
that satisfy
\[
\begin{cases}
L c + \lambda \hat{f}|_Y = 0, \\
-B c - \lambda \hat{g}|_Z = 0,
\end{cases}
\]
which is in turn equivalent to
\eqref{eq:discrete-compatibility-conditions}
being inconsistent.

To show \eqref{dualstab} we assume that \eqref{eq:ls} is
consistent. Then it follows from 
\cite[Lemma~8]{davydov-minimal} that 
\begin{align*}
\inf\big\{\norm{x}_{\sharp}: Ax=b\big\} &=\sup\Big\{b^T\!	
\begin{pmatrix}	c \\	\lambda	\end{pmatrix}: 
c\in\R^M,\;\lambda\in\R\;\text{ with }
\Big\|A^T\!\begin{pmatrix}	c \\ \lambda\end{pmatrix}\Big\|_\sharp'
\le1\Big\},\\
&=\sup\Big\{\lambda I(\fhat,\ghat): c\in\R^M,\;\lambda\in\R\;\text{ with }
\Big\|\begin{pmatrix}	Lc +\lambda \fhat|_Y\\ 	Bc +\lambda\ghat|_Z\end{pmatrix}
\Big\|_\sharp'\le1\Big\}\\
&=|I(\fhat,\ghat)|\,\sup\Big\{\lambda>0 :
\exists\,\tilde c\in\R^M\;\text{ with }
\Big\|\begin{pmatrix}	L\tilde c -\fhat|_Y\\ 	B\tilde c -\ghat|_Z\end{pmatrix}
\Big\|_\sharp'\le\frac1\lambda\Big\},
\end{align*}
and \eqref{dualstab} follows. \qedhere
\end{proof}

Since \eqref{eq:discrete-compatibility-conditions} is a
discretization of \eqref{eq:incompatibility}, we expect that the
discrete incompatibility condition holds whenever the
discretization obtained via the choice of $Y,Z$ and the numerical 
differentiation scheme is sufficiently accurate for the linear
system to inherit this important feature of the continuous
problem.

Even though we cannot guarantee the consistency of 
\eqref{eq:linear-system-fhat-ghat} in general, in particular for
arbitrary user-supplied quadrature nodes $Y,Z$, a simple
strategy of choosing the size $M$ of the discretization
vector $\Psi(u)$ such that the linear system
\eqref{eq:linear-system-fhat-ghat}
is underdetermined with significantly fewer rows than columns
makes the system solvable in all of our numerical experiments,
leaving enough room for enforcing reasonable size of
the stability constants $\norm{\muw }_1$ and $\norm{\sigmav }_1$
through minimization, see Section~\ref{sec:numerical-tests}
for more details.

For completeness, we note that
Algorithm~\ref{algo:abstract-algorithm}
is not the only way to derive numerical
integration schemes based on the error analysis
in Proposition~\ref{prop:joint-error-estimate}.
For example, if a sufficiently accurate quadrature
formula $(Z,v)$ for the integral over  $\partial\Omega \neq \emptyset$
is known and fixed in advance, then one could seek weights $w^*$
by minimizing a norm $\norm{\cdot}_{\sharp} \colon \R^{N_Y} \to \R_+$
over the solution space of the non-homogeneous
equation $L^T w = B^T v$. Alternatively, without fixing $v$,
one might relax the constraint $L^T w - B^T v = 0$
by instead minimizing the expression
\[
\lambda_1 \norm{w}_2^2
+ \lambda_2 \norm{v}_2^2
+ \lambda_3 \norm{L^T w - B^T v}_2^2,
\]
where the coefficients
$\lambda_1, \lambda_2, \lambda_3 \in \R_+$
are chosen to balance the terms appropriately.
The accuracy and stability analysis of quadrature
formulas generated by these alternative methods
lies beyond the scope of this paper.
Among the variants explored in our preliminary numerical
experiments, however, Algorithm~\ref{algo:abstract-algorithm}
appeared to achieve the best trade-off
between accuracy, generality, and computational efficiency.

In the following subsections we illustrate and compare
several options for the choices to be made at each step of
Algorithm~\ref{algo:abstract-algorithm}.

\subsection{Auxiliary functions \texorpdfstring{$\hat{f}$}{fhat}
and \texorpdfstring{$\hat{g}$}{ghat}}
\label{ssec:on-the-choice-of-fhat-and-ghat}

The non-homogeneous constraint
\begin{equation} \label{eq:non-homogeneous-constraint}
\sum_{i=1}^{N_Y} \muw_i \fhat(y_i)
- \sum_{i=1}^{N_Z} \sigmav_i \ghat(z_i)
= \int_\Omega \fhat \dmu
- \int_{\partial\Omega} \ghat \dsigma \neq 0
\end{equation}
plays a fundamental role in the definition of linear system
\eqref{eq:linear-system-fhat-ghat}. It
rules out the trivial solution $(\muw ^*,\sigmav ^*) = (0,0)$,
and ensures that the combined quadrature formula is accurate
beyond the pairs of integrands $(f,g)$ with $I(f,g)=0$.
Thus, the method requires
the availability of a single nonvanishing
combined moment $I(\fhat,\ghat)$.

In practice, choices for $\fhat$ and $\ghat$ need not
be complicated: in particular, both 
\begin{equation}\label{eq:fghat01}
(\fhat,\ghat) \equiv (1,0)\quad\text{or}\quad(\fhat,\ghat) \equiv (0,1)
\end{equation}
are usually effective,
as long as at least one of the measures
\[
\abs{\Omega} \deq \mu(\Omega)=\int_\Omega 1 \dmu, \qquad
\abs{\partial\Omega} \deq \sigma(\partial\Omega) =\int_{\partial\Omega} 1 \dsigma
\]
is either known in advance, or can be accurately approximated in
a straightforward way. In particular, this can be easily done
using standard quadrature formulas when the boundary
$\partial\Omega$ is defined by explicit parametric patches over
simple regions in the parameter domain. 

The following approach does not even require knowing the measures
of either $\Omega$ or $\partial\Omega$ and is therefore
entirely moment-free, which is an advantage for instance for
domains defined by implicit surfaces as used in the
level set method, or by trimmed multipatch surfaces of 
Computer Aided Design.

Assume that $\Omega$ is a domain in $\R^d$ and
$\mu$ and $\sigma$ are the standard
Lebesgue and hypersurface measures.
Let $\Phi(x,x_0)$ be the fundamental solution of the
Laplace equation centered at a point $x_0 \in \Omega$:
\[
\Phi(x,x_0)
= \frac{1}{2\pi} \log\bigl(\norm{x-x_0}\bigr)
\quad\text{if $d = 2$,} \qquad
\Phi(x,x_0)
= -\frac{\norm{x-x_0}^{2-d}}{d(d-2)\omega_d}
\quad\text{if $d = 1$ or $d > 2$,}
\]
with $\omega_d$ being the measure of
the $d$-dimensional unit ball. Let $\nu$ be
the outward-pointing unit normal field along $\partial\Omega$,
and let $\partial_\nu$ be the normal derivative operator
on $\partial\Omega$. By Green's third identity,
\[
\int_{\partial\Omega} \partial_\nu \Phi(x,x_0) \dsigmax = 1,
\]
and so the choice 
\begin{equation}\label{eq:fghatfs}
\fhat(x)=0\quad\text{and}\quad\ghat(x)= \partial_\nu
\Phi(x,x_0)=\frac{\nu(x)^T(x-x_0)}{d\omega_d\norm{x-x_0}^d}
\end{equation}
satisfies
\[
\int_\Omega \fhat \dmu
- \int_{\partial\Omega} \ghat \dsigma = -1
\]
independently of the shape of the domain and the position
of $x_0 \in \Omega$.
Observe that the singularity of $\Phi(x,x_0)$ at the interior
point $x_0$ of $\Omega$ plays no role because 
$\ghat(x)= \partial_\nu\Phi(x,x_0)$ is only evaluated at the
boundary $\partial\Omega$. In practice, care must be taken to avoid
points $x_0$ too close to the boundary,
or else the smoothness of the solution $u$ of
\eqref{eq:bvp-joint-error-estimate} may be affected by the
near-singularity of $\ghat$, and as a consequence
the quantities $\varepsilon_L(u)$, $\varepsilon_B(u)$
in the quadrature errors estimates may blow up.

Closed-form expressions for the fundamental solutions
of the Laplace equation are known on some manifolds,
too, see for example the list in \cite{beltran2019discrete}.
This approach can be useful, for example, to compute
a quadrature formula on a subdomain of the $d$-dimensional
sphere with unknown $\abs{\Omega}$ and
$\abs{\partial\Omega}$.

\subsection{Operators \texorpdfstring{$\Lcal$}{Lcal} and \texorpdfstring{$\Bcal$}{Bcal}}
\label{ssec:choice-of-operators-Lcal-and-Bcal}

The error bounds in
Section~\ref{ssec:error-analysis-in-the-abstract-setting}
depend on the accurate reconstruction of pointwise
values of $\Lcal u$ and $\Bcal u$ from a given vector
$\Psi(u)$, with $u$ being a solution to a
boundary value problem of the form
\begin{equation} \label{eq:bvp-generic}
\begin{cases}
\Lcal u = f &\text{in $\Omega$}, \\
\Bcal u = g &\text{on $\partial\Omega$},
\end{cases} \qquad\text{with}\quad
\int_{\Omega} f \dmu = \int_{\partial\Omega} g \dsigma.
\end{equation}
No matter which numerical differentiation method is used
to define matrices $L$ and $B$, the accuracy of the
reconstruction, and thus the quantities $\eps_L(u)$ and
$\eps_B(u)$ in \eqref{eq:joint-error-estimate}, depend crucially
on the smoothness of~$u$.
On the contrary, it is natural in numerical integration to
investigate how the accuracy of the quadrature formulas depends 
on the smoothness of the integrands $f$ and $g$.
Therefore, we are led to consider operators
$\Lcal$ and $\Bcal$ for which the boundary value problem
\eqref{eq:bvp-generic} admits solutions of sufficiently high
smoothness, depending on the regularity of $f$ and $g$.

More rigorously, the regularity of functions
in $\Omega$ and on its boundary will be measured
using Sobolev norms.
Let us fix some notation for Sobolev spaces
that will be used in this and the next subsections.
The Sobolev space $W_p^k(\Omega)$ with integer order $k \geq 0$
and exponent $1 \leq p \leq \infty$ on a domain
$\Omega \subset \R^d$ is equipped with the Sobolev
norm and seminorm
\[
\norm{f}_{W_p^k(\Omega)}
= \sum_{\abs{\alpha} \leq k} \norm{\partial^\alpha f}_{L_p(\Omega)},
\qquad
\seminorm{f}_{W_p^k(\Omega)}
= \sum_{\abs{\alpha} = k} \norm{\partial^\alpha f}_{L_p(\Omega)},
\]
where $\norm{\cdot}_{L_p(\Omega)}$ denotes the usual $L_p$-norm
on $\Omega$, and $\partial^\alpha$ is the generalized partial
derivative defined by the multi-index $\alpha\in\Z^d_+$.
To properly state some results involving trace operators,
Sobolev spaces of integer order are not sufficient,
therefore Sobolev-Slobodeckij spaces $W_p^s(\Omega)$ of intermediate regularity
are considered. They are defined for fractional order
$s \in \R_+ \setminus \Z$ and exponent $1 \leq p < \infty$. 
These spaces are also generalized to the case when $\Omega$
is a domain in a $C^{\infty}$
Riemannian manifold $\Mcal$. For $p = 2$, the spaces
$W_2^s(\Omega)$ are Hilbert spaces and are denoted $H^s(\Omega)$.
Sobolev-Slobodeckij spaces up to order $k+1$ can also be
defined on the boundary of a domain, as long as the boundary
is of class $C^{k,1}$, which means $k$ times continuously
differentiable with 
Lipschitz continuous derivatives of order $k$. 
For more information, see e.g.~\cite{grisvard-elliptic}.

\subsubsection{The elliptic approach}
\label{sssec:neumann}

Relying on classical results in the theory of elliptic partial
differential equations, see e.g.\ \cite[p.~409]{taylor-PDE},
we consider the choice of the Laplace-Beltrami
operator as $\Lcal$ %
and the normal derivative trace operator as
$\Bcal$. %

Let $\Omega$ be a domain in a $C^\infty$ Riemannian
manifold $\Mcal$, with compact closure $\Omegabar$ and (possibly empty)
smooth boundary $\partial\Omega$, let $\mu$ and
$\sigma$ be the measures on $\Omega$ and $\partial\Omega$
induced by the Riemannian metric of $\Mcal$,
and let $\nu$ be the outward-pointing unit
normal field along the boundary.
Then it follows from the divergence theorem that 
\begin{equation} \label{eq:divlapbe}
\int_\Omega \Delta u \dmu = \int_{\partial\Omega}
\partial_\nu u \dsigma\quad\text{for all }u \in H^2(\Omega),
\end{equation}
where $\Delta$ is the Laplace-Beltrami operator
on the manifold, and $\partial_\nu$ is the normal derivative
trace operator. In other words, identity
\eqref{eq:compatibility-of-L-and-B} is satisfied for
\begin{equation} \label{eq:choice-Lcal-Bcal-laplacian} 
\Lcal =\Delta, 
\quad \Bcal = \partial_\nu, 
\end{equation}
with $\Dcal = H^2(\Omega)$. 

\begin{proposition}[Well-posedness and regularity of Neumann problem
on smooth manifolds]
\label{prop:well-posedness-neumann}
Under the above assumptions, for all $f \in H^s(\Omega)$ and 
$g \in H^{s+1/2}(\partial\Omega)$, the Neumann boundary value problem
\begin{equation} \label{eq:bvp-laplacian}
\begin{cases}
\Delta u = f &\text{in $\Omega$} \\
\partial_\nu u = g &\text{on $\partial\Omega$}
\end{cases}
\quad \text{with constraint } \; \int_\Omega u \dmu = 0
\end{equation}
has a unique solution $u \in H^{s+2}(\Omega)$
that depends continuously on $f$ and $g$, if and only if
\begin{equation} \label{eq:compatibility-conditions}
\int_{\Omega} f \dmu = \int_{\partial\Omega} g \dsigma,
\end{equation}
a constraint known as \emph{compatibility condition}
for the Neumann boundary value problem.
The continuous dependence of the solution on $f$ and $g$
amounts to the existence of a constant $C > 0$
independent of $f$ and $g$ such that
\[
\norm{u}_{H^{s+2}(\Omega)}
\leq C \Bigl( \norm{f}_{H^{s}(\Omega)}
+ \norm{g}_{H^{s+1/2}(\partial\Omega)} \Bigr).
\]
\end{proposition}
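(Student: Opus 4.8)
The plan is to obtain this as a standard consequence of the $L^2$ theory of the Neumann problem, following the references already cited, so that the argument splits into three pieces: necessity of the compatibility condition \eqref{eq:compatibility-conditions}, existence and uniqueness of a weak solution, and elliptic regularity together with the quantitative bound. \emph{Necessity} is immediate: if $u \in H^{s+2}(\Omega) \subset H^2(\Omega)$ solves \eqref{eq:bvp-laplacian}, then integrating the equation $\Delta u = f$ and invoking \eqref{eq:divlapbe} gives $\int_\Omega f \dmu = \int_\Omega \Delta u \dmu = \int_{\partial\Omega} \partial_\nu u \dsigma = \int_{\partial\Omega} g \dsigma$, which is exactly \eqref{eq:compatibility-conditions}. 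From now on I assume \eqref{eq:compatibility-conditions} holds.

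\emph{Existence and uniqueness of a weak solution.} I would work on the closed subspace $V = \{ v \in H^1(\Omega) : \int_\Omega v \dmu = 0 \}$ with the symmetric bilinear form $a(u,v) = \int_\Omega \langle \grad u, \grad v \rangle \dmu$, which is continuous on $H^1(\Omega)$ and, by the Poincaré--Wirtinger inequality on the connected domain $\Omega$ with compact closure and smooth (hence Lipschitz) boundary, coercive on $V$. The linear functional $\ell(v) = -\int_\Omega f v \dmu + \int_{\partial\Omega} g v \dsigma$ is bounded on $H^1(\Omega)$ via the trace embedding $H^1(\Omega) \hookrightarrow H^{1/2}(\partial\Omega) \hookrightarrow L^2(\partial\Omega)$, and \eqref{eq:compatibility-conditions} says precisely that $\ell$ annihilates the constants, hence descends to $V$. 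Lax--Milgram then yields a unique $u \in V$ with $a(u,v) = \ell(v)$ for all $v \in V$; since both sides vanish on constants the identity extends to all $v \in H^1(\Omega)$, and integration by parts identifies $u$ as a weak solution of $\Delta u = f$, $\partial_\nu u = g$. For uniqueness within $H^{s+2}(\Omega)$, the difference $\tilde u$ of two solutions satisfies $a(\tilde u,\tilde u) = 0$, so $\tilde u$ is constant, and the constraint $\int_\Omega \tilde u \dmu = 0$ forces $\tilde u = 0$.

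\emph{Regularity and the estimate}, which I expect to be the main obstacle. One upgrades $u \in H^1(\Omega)$ to $u \in H^{s+2}(\Omega)$ by the usual localization: a finite partition of unity subordinate to coordinate charts, interior $L^2$ regularity estimates for $-\Delta$, and boundary estimates obtained by flattening $\partial\Omega$ and reducing to the constant-coefficient Neumann problem on a half-ball (tangential difference quotients, then using the equation to recover normal derivatives), all valid in the fractional Sobolev--Slobodeckij scale and producing an a priori bound
\[
\norm{u}_{H^{s+2}(\Omega)} \le C\bigl( \norm{f}_{H^{s}(\Omega)} + \norm{g}_{H^{s+1/2}(\partial\Omega)} + \norm{u}_{L^2(\Omega)} \bigr).
\]
Since $\Mcal$ is $C^\infty$, the local coefficients of $\Delta$ and the chart transition maps are smooth, so no loss of regularity occurs at any finite order $s$; rather than reproduce this machinery I would cite \cite[p.~409]{taylor-PDE} and, for the bookkeeping of fractional-order Sobolev spaces on the boundary, \cite{grisvard-elliptic}. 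The lower-order term $\norm{u}_{L^2(\Omega)}$ is then absorbed, either directly from the Lax--Milgram bound $\norm{u}_{H^1(\Omega)} \le C(\norm{f}_{L^2(\Omega)} + \norm{g}_{H^{1/2}(\partial\Omega)})$ or by a standard compactness-uniqueness argument exploiting that the zero-mean solution is unique; what remains is exactly the asserted estimate, which \emph{is} the continuous dependence on $f$ and $g$ (equivalently, it also follows from the closed graph theorem applied to the now well-defined solution operator $(f,g) \mapsto u$). The delicate point throughout is the correct localization and trace bookkeeping in the fractional scale $H^{s+1/2}(\partial\Omega)$ when $s \notin \Z$ and on a manifold rather than on $\R^d$; everything else is routine.
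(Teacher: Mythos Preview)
The paper does not actually prove this proposition: it is stated as a classical fact, with the citation \cite[p.~409]{taylor-PDE} given just before the statement, and the text immediately moves on to ``As a direct consequence of this result\ldots''. Your proposal is therefore not competing with any argument in the paper; you have supplied a correct standard proof sketch (Lax--Milgram on the mean-zero subspace for existence and uniqueness, localization and boundary flattening for regularity, absorption of the lower-order term) where the paper is content to cite the literature.
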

As a direct consequence of this result, the choice
\eqref{eq:choice-Lcal-Bcal-laplacian}
is fully satisfactory when $\Omega$ is either a closed manifold with 
$\partial\Omega=\emptyset$, or a bounded domain $\Omega$ in a smooth
Riemannian manifold $\Mcal$, with smooth boundary
$\partial\Omega$. Indeed, in these cases a solution $u$
to the boundary value problem \eqref{eq:bvp-joint-error-estimate}
is guaranteed to exist and even has a higher regularity
than the corresponding data $f-\alpha\fhat,g-\alpha\ghat$ in the right-hand side.

However, when $\partial\Omega$ is not smooth, as is
often the case in practical applications, 
comparable elliptic regularity results do not hold anymore,
and so the solution $u$ to the Neumann boundary value
problem~\eqref{eq:bvp-joint-error-estimate} may not be
regular enough for the numerical differentiation scheme
\[
 \Delta u \restrict{Y}\approx L\Psi(u) , \quad
\partial_\nu u \restrict{Z} \approx B\Psi(u)
\]
to be sufficiently accurate even for highly smooth $f$ and $g$,
which indicates that the choice
\eqref{eq:choice-Lcal-Bcal-laplacian} may be problematic
because the numerical
differentiation errors $\varepsilon_L(u)$ and $\varepsilon_B(u)$
featuring in the estimate \eqref{eq:joint-error-estimate}
are presumably affected by the loss of smoothness of $u$.

Indeed, let $\Omega$ be a bounded Lipschitz domain in $\R^d$,
and $\mu,\sigma$ the standard Lebesgue and
hypersurface measures. Then $\Delta$ is
the usual Laplace operator. The Lipschitz regularity of $\Omega$
is sufficient to define an outward-pointing
unit normal field $\nu$ almost everywhere on $\partial\Omega$,
and \eqref{eq:divlapbe} still holds.
Moreover, for all $f \in L^2(\Omega)$ and
all $g \in L^2(\partial\Omega)$, the Neumann boundary problem
\eqref{eq:bvp-laplacian} has a unique
solution $u \in H^{3/2}(\Omega)$ if and only if
the compatibility condition \eqref{eq:compatibility-conditions}
is satisfied, as implied by the results in \cite{jerison1981neumann}.
However, standard examples in \cite{grisvard-elliptic}
for the corresponding Dirichlet problem on piecewise
smooth domains $\Omega\subset\R^2$ with reentrant corners can be
modified to apply to the Neumann problem, and generate for any
$s>3/2$ a solution $u \notin H^{s}(\Omega)$, even when $f$ is
infinitely differentiable and $g=0$.
This means that the regularity of $u$ is not only limited
by the regularity of $f$ and $g$, but also by the regularity
of $\partial\Omega$.

The following example, adapted from
\cite{grisvard-elliptic}, shows that this may happen
even on elementary geometries, such as a square.

\begin{example} \label{ex:elliptic-regularity-fails}
Let $\Omega = (0,1)^2$, and let $\Gamma_1,\ldots,\Gamma_4$ be the
left, bottom, right, and top sides of $\partial\Omega$,
respectively. For all $i = 1,\dots,4$ and $s \geq 0$,
let $\gamma_i$ be the trace operator from $H^{s+1/2}(\Omega)$
to $H^s(\Gamma_i)$.
Functions in $H^s(\Gamma_i)$ are parametrized
by the restrictions of Cartesian coordinates $(x,y)$
to~$\partial\Omega$, that is, $x \in [0,1]$ for $i = 2,4$
and $y \in [0,1]$ for $i = 1,3$. The following function,
defined using polar coordinates $(r,\theta)$
centered at the origin, is harmonic in $\Omega$:
\[
v(r,\theta) = r^2 \bigl(
\log(r) \cos(2\theta) - \theta \sin(2\theta)
\bigr).
\]
Let $\varphi \colon [0,1] \to \R$ be a smooth function
such that $\varphi(r) \equiv 1$ for all $r \in [0,\frac{1}{3}]$,
and $\varphi(r) \equiv 0$ for all $r \in [\frac{2}{3},1]$.
Then, the function $u(r,\theta) \deq \varphi(r) \, v(r,\theta)$
is the unique solution (up to a constant) of
\[
\begin{cases}
\Delta u = \Delta (\varphi v)
	&\text{in $\Omega$} \\
\partial_\nu u = 0
	&\text{on $\partial\Omega \setminus \Gamma_1$} \\
\partial_\nu u = \gamma_1(-\partial_x u) = \pi y \varphi(y)
	&\text{on $\Gamma_1$}.
\end{cases}
\]
The right hand side functions $f,g$ of this Neumann problem 
are smooth in the sense
that they are restrictions to $\Omega$ and $\partial\Omega$
of suitable functions that belong to $C^\infty(\R^2)$.
Nevertheless, it is clear that $u \notin H^3(\Omega)$,
because $u$ and $v$ coincide for $r < 1/3$ and
\[
\frac{\partial^3 v(x,y)}{\partial x^3}
= \frac{2x}{x^2+y^2}
= \frac{2\cos(\theta)}{r}
\notin L^2(\Omega).
\]
\end{example}

In principle, the estimate \eqref{eq:joint-error-estimate} that
relates the error of our quadrature to the numerical
differentiation errors $\varepsilon_L(u)$ and $\varepsilon_B(u)$
is just an upper bound, with no guarantee of being tight.
Moreover, on Lipschitz domains in $\R^2$ with piecewise
smooth boundary, it can be proved that
$u$ still has regularity $H^{s+2}$ away from corners,
so one could hope that a few inaccurate approximations
of $\Delta u(y_i)$ and $\partial_\nu u(z_i)$
around corners would not affect the overall
accuracy of the quadrature formulas. However, our numerical
experiments in Section~\ref{sec:numerical-tests}
demonstrate that a catastrophic loss of accuracy
in the quadrature formulas may occur on a Lipschitz domain
with piecewise smooth boundary, whereas we do not register 
this behavior for the alternative approach suggested
below based on the divergence operator.

\subsubsection{The divergence approach}
\label{sssec:div}

The most basic instance of the identity
\eqref{eq:compatibility-of-L-and-B} is clearly the divergence
theorem, which suggests the choice 
\begin{equation} \label{eq:choice-div}
\Lcal = \diver,
\quad \Bcal = \gamma_\nu,
\end{equation}
where $\diver$ is the divergence operator on the manifold
and $\gamma_\nu$ is the normal trace operator.
This leads us to the boundary value problem
\begin{equation} \label{eq:bvp-divergence}
\begin{cases}
\diver F  = f &\text{in $\Omega$}, \\
\gamma_\nu(F) = g &\text{on $\partial\Omega$}
\end{cases}\qquad\text{with}\quad
\int_{\Omega} f \dmu = \int_{\partial\Omega} g \dsigma,
\end{equation}
where the measures $\mu$ and $\sigma$ are determined by the
Riemannian metric, the compatibility condition on the right must
be satisfied due to the divergence theorem, and the solution $F$ 
is sought in an  appropriate space $\Dcal$ of vector fields on
$\Omega$. In the case of the manifold $\Mcal=\R^d$, vector fields
may be identified with vector valued functions
$F:\overline{\Omega}\to\R^d$, and the normal trace is just the
projection $\gamma_\nu(F)=\nu^T\!F$ of $F$ to the
outward-pointing unit normal $\nu$ on the boundary $\partial\Omega$
for sufficiently smooth $F$.

Any solution $u$ of the Neumann problem \eqref{eq:bvp-laplacian}
gives rise to a solution $F=\nabla u$ of
\eqref{eq:bvp-divergence}. Hence, no smooth solutions are ever
lost by considering operators $\diver$ and $\gamma_\nu$
instead of $\Delta$ and $\partial_\nu$. However,
unlike \eqref{eq:bvp-laplacian}, the boundary value problem
\eqref{eq:bvp-divergence} is underdetermined,
with highly non-unique solutions $F$.
Recall that the error bound \eqref{eq:joint-error-estimate}
involves the infimum over all solutions of
\eqref{eq:bvp-joint-error-estimate}, so the non-uniqueness is an
advantage as \eqref{eq:joint-error-estimate} may rely on the
solution $F$ with the smallest numerical differentiation errors
$\eps_L(F)$ and $\eps_B(F)$.
In particular, one may hope that on a Lipschitz domain
with piecewise smooth boundary there still exists a solution $F$
of \eqref{eq:bvp-divergence} that inherits a high order of
smoothness from smooth functions $f,g$ in the right hand side,
even when the same is not true for \eqref{eq:bvp-laplacian}.

Although we have not found results of this type in the
literature, considerable attention has been paid to the corresponding
Dirichlet problem, where the full trace of $F$ on the boundary is
prescribed instead of the normal trace $\nu^T\!F$ in
\eqref{eq:bvp-divergence}. The Dirichlet problem, however, requires 
additional compatibility conditions between $f$  and the boundary
data. %
In particular, if
$F$ vanishes on the boundary of a polygon $\Omega\subset\R^2$,
then its divergence $f$ must vanish at the vertices of $\Omega$,
which is a quite unnatural condition to impose on integrands $f$.
Nevertheless, several results on the Dirichlet problem for domains in
$\Mcal=\R^d$ imply that a solution
$F$ of \eqref{eq:bvp-divergence} belongs to $H^{s+1}(\Omega)^d$
when $f\in H^s(\Omega)$ and $g$ satisfies
certain smoothness and compatibility assumptions. In particular, this follows from
\cite[Lemma II.2.3.1]{sohr2001navier} for Lipschitz domains in
$\R^d$, with $s\in\N$, when $g=0$ and $f$ vanishes on the
boundary   $\partial\Omega$ to order $s-1$. Furthermore,
\cite{arnold1988regular} gives the same regularity result for
polygons in $\R^2$ when $g=0$ and $f$ vanishes at all vertices of
the polygon, and also for non-homogeneous $g$, with smoothness
and compatibility conditions that are difficult to recast to our
setting, in which $g$ is the normal projection of the Dirichlet
boundary data studied in \cite{arnold1988regular}. These results
are generalized in \cite{mitrea2008poisson} to 
arbitrary bounded Lipschitz domains in smooth
manifolds, see in particular \cite[Corollary 1.4]{mitrea2008poisson}
for the case $\Mcal=\R^d$.

Our numerical results in Section~\ref{sec:numerical-tests}
for the divergence-based quadrature do not indicate any
loss of convergence order  on non-smooth domains, which
suggests that the smoothness of $F$ is not lost.
Moreover, the following elementary construction produces a smooth
solution $F$ of \eqref{eq:bvp-divergence} on $\Omega=(0,1)^2$,
whenever $f$ is smooth in $\Omega$ and $g$ is smooth on each side
of the square, in contrast to
Example~\ref{ex:elliptic-regularity-fails} for the
elliptic Neumann problem.

\subsubsection*{Smooth solution of divergence boundary
problem on the square} 

As before, let $\Gamma_1,\ldots,\Gamma_4$
denote the left, bottom, right, and top sides of $\partial\Omega$,
and let $g_i$ for $i=1,\ldots,4$ be the univariate
functions representing $g|_{\Gamma_i}$
in the natural Cartesian parametrizations. 
We set $\tilde g_i \deq g_i-\alpha_i$,
with $\alpha_i=\int_0^1g_i(t)\,dt$, and
consider the vector fields
$$
G_1(x,y)=
\begin{pmatrix}(x-1)\tilde g_1(y)\\[6pt] -\int_0^y\tilde g_1(t)\,dt
\end{pmatrix},\quad
G_2(x,y)=
\begin{pmatrix} -\int_0^x\tilde g_2(t)\,dt\\[6pt](y-1)\tilde g_2(x)
\end{pmatrix},$$
$$
G_3(x,y)=
\begin{pmatrix}x\tilde g_3(y)\\[6pt] -\int_0^y\tilde g_3(t)\,dt
\end{pmatrix},\quad
G_4(x,y)=
\begin{pmatrix} -\int_0^x\tilde g_4(t)\,dt\\[6pt]y\tilde g_4(x)
\end{pmatrix}
$$
that satisfy
$$
\diver G_i=0,\qquad \nu^T\!G_i|_{\partial\Omega}=
\begin{cases}\tilde g_i & \text{in } \Gamma_i,\\
0 & \text{otherwise}. 
\end{cases}$$
By construction, it follows that
$$
\tilde G(x,y) \deq G_1(x,y)+G_2(x,y)+G_3(x,y)+G_4(x,y)+
\begin{pmatrix}\alpha_1(x-1)+\alpha_3x\\[6pt]
\alpha_2(y-1)+\alpha_4y
\end{pmatrix}$$
is a solution of the problem
$$
\diver \tilde G=\alpha_1+\alpha_2+\alpha_3+\alpha_4,\quad
\nu^T\!\tilde G|_{\partial\Omega}=g.$$
Moreover, let 
$\alpha=\int_\Omega f(x,y)\,dx\,dy$,
 $h(y)=\int_0^1f(\xi,y)\,d\xi-\alpha$ and $\tilde f(x,y)=f(x,y)-\alpha-h(y)$. 
Then the vector field
$$
\tilde F(x,y)=\begin{pmatrix}\int_0^x\tilde f(\xi,y)\,d\xi\\[6pt] 
\int_0^yh(\zeta)\,d\zeta
\end{pmatrix}$$
satisfies
$$
\diver \tilde F=f-\alpha,\qquad \nu^T\!\tilde
F|_{\partial\Omega}=0.$$
Since $\alpha=\alpha_1+\alpha_2+\alpha_3+\alpha_4$ by the
compatibility condition, we conclude that $F=\tilde F+\tilde G$
solves \eqref{eq:bvp-divergence}. It is easy to see that for any
$r\ge0$ we have $F\in C^r(\overline\Omega)^2$ as soon as $f\in
C^r(\overline\Omega)$ and $g_i\in C^r[0,1]$
for all $i=1,\ldots,4$, which even allows $g$
to be discontinuous at the corners of $\partial\Omega$.

\subsection{Numerical differentiation schemes}
\label{ssec:on-the-choice-of-L-and-B}
The choice of the numerical differentiation scheme $(\Psi,L,B)$
is crucial for the performance of the method, because
the factors $\eps_L(u)$ and  $\eps_B(u)$ in the error bounds
\eqref{eq:fba} and \eqref{eq:gba}, as well as the solvability of
the linear system \eqref{eq:ls} with reasonable stability
constants $\norm{\muw }_1$ and $\norm{\sigmav }_1$
depend on this choice. In particular, the discrete 
incompatibility condition \eqref{eq:discrete-compatibility-conditions}
must be satisfied. Moreover, for an efficient numerical
implementation it is important that the differentiation matrices 
\[
L = \bigl( \ell_{ij} \bigr)_{i=1,j=1}^{N_Y,M}
\quad\text{and}\quad
B = \bigl( b_{ij} \bigr)_{i=1,j=1}^{N_Z,M}
\]
are sparse.

We describe two approaches that work well in our numerical tests.
The first is based on meshless numerical differentiation formulas,
as used in the generalized finite difference methods,
and the second employs tensor-product splines.

In both cases we assume that $\Lcal$ and $\Bcal$ are  linear
differential operators of integer order, well-defined for
functions in a space  $\Dcal$  of functions on the closure of a
domain $D$  such that $\Omega\subset D$. 
The domains $\Omega$ and $D$
need not have the same dimension; indeed, when $\Mcal$
is embedded into an ambient space $\Acal$ such as
a surface $\Mcal$ in $\Acal=\R^3$, one may
prefer to choose a domain $D$ in $\Acal$ rather than in $\Mcal$,
leading to an \emph{immersed} approach. 

For the sake of notational simplicity,
but without loss of generality, we assume in this section that
the functions in $\Dcal$ are scalar-valued.

\subsubsection{Meshless finite difference formulas}
\label{sssec:mfd}
Assuming that $\Dcal\subset C(\overline{D})$, we choose a finite set 
$X = \{x_1,\ldots,x_M\} \subset D$, define the
discretization operator $\Psi \colon \Dcal \to \R^M$
by pointwise evaluation over $X$,
\begin{equation} \label{eq:Psi-u}
\Psi(u) = \bigl( u(x_1), \dots, u(x_M) \bigr)^T,
\end{equation}
and approximate $\Lcal u (y_i)$ and $\Bcal u (z_i)$ at the
quadrature nodes $y_i \in Y$ and $z_i \in Z$
by appropriate numerical differentiation formulas
\begin{equation} \label{eq:numerical-differentiation-formulas}
\Lcal u (y_i) \approx
\sum_{j=1}^M \ell_{i j} \, u(x_j), \qquad
\Bcal u (z_i) \approx
\sum_{j=1}^M b_{i j} \, u(x_j).
\end{equation}
Since integer order differentiation is a local operation, sufficiently accurate
formulas can be found with the sums in
\eqref{eq:numerical-differentiation-formulas}
restricted to small sets of indices
$S_{L,i}$ and $S_{B,i}$ corresponding to subsets of nodes
in $X$ close to $y_i$ and $z_i$, known as
\emph{sets of influence}. This in turn implies that
$\ell_{ij}=0$ for $j\notin S_{L,i}$ and $b_{ij}=0$ for 
$j\notin S_{B,i}$, and so the matrices $L$ and $B$ are sparse.

When $X$ is a gridded set and $Y\cup Z\subset X$, formulas of
this type can be obtained by  classical finite differences as
those used in the finite difference method for partial
differential equations. For irregular sets $X,Y,Z$, as typically
needed on more complicated domains $\Omega$, numerical
differentiation formulas are in the core of the meshless
finite difference methods, such as RBF-FD or GFDM, see for example
\cite{FFprimer15,bayona2017role,JTAMB19,suchde2019meshfree,D23}  
and references therein. Therefore, we generally refer to
\eqref{eq:numerical-differentiation-formulas}
as \emph{meshless finite difference formulas}.

A basic approach to obtaining accurate numerical differentiation
weights $\ell_{ij}$, $j\in S_{L,i}$, and $b_{ij}$,  $j\in
S_{B,i}$, is to impose exactness of formulas 
\eqref{eq:numerical-differentiation-formulas} over a local basis of
functions that can provide a good approximation of
$u\in\Dcal$ in a neighborhood of $y_i$ and $z_i$, respectively. 
In the case when $\Omega$ is a domain in $\R^d$, a natural local
approximation tool is given by the spaces $\Pi^d_q$ of multivariate
polynomials of total degree $<q$. The sets of influence
are usually chosen somewhat larger than the minimum needed to admit a 
numerical differentiation formula exact for all $u\in \Pi^d_q$,
and the extra degrees of freedom are used either to minimize a
(semi-)norm of the weight vectors $(\ell_{ij})_{j\in S_{L,i}}$
and $(b_{ij})_{j\in S_{B,i}}$, or to enhance the local
approximation space by adding radial basis functions. We refer to
\cite{davydov-minimal,davydov-optimal,D21} for the theory and
error bounds for these methods. Polynomial type methods are also
available on certain manifolds, such as the $d$-dimensional
sphere, where spherical harmonics may be employed. Otherwise,
numerical differentiation formulas on arbitrary manifolds may be
generated with the help of positive definite kernels. Suitable local error
bounds for the kernel-based numerical differentiation may be found 
in \cite{davydov-kernel} for domains in $\R^d$ and in
\cite[Section 4.4]{D23} for reproducing kernels of Sobolev
spaces on manifolds.

We now discuss in more detail the numerical differentiation formulas
employed in our numerical tests. They are generated by the
\emph{polyharmonic radial basis kernel} $K(x,y)=\|x-y\|^{2m-1}$,
$x,y\in\R^d$, with a polynomial term in $\Pi^d_q$, $q\ge m$, which 
is a standard approach in RBF-FD since \cite{bayona2017role}. We
use $m=q$. Thus, the weights $\ell_{ij}$, $j\in S_{L,i}$, are
obtained by requiring the exactness  of the first formula in
\eqref{eq:numerical-differentiation-formulas} for all functions
of the form
$$
u(x)=\sum_{j\in S_{L,i}}c_j\|x-x_j\|^{2q-1}+\tilde p(x),\quad
c_j\in\R,\;\tilde p\in \Pi^d_q,\quad\text{with }
\sum_{j\in S_{L,i}}c_jp(x_j)=0\text{ for all }p\in \Pi^d_q,$$
and similarly for the weights $b_{ij}$, $j\in S_{B,i}$. In most
cases the square linear system  that arises from
these conditions is regular. However, as shown in \cite{D21},
numerical differentiation weights are uniquely determined and can
be computed by a null space method also for certain `deficient'
sets of influence, such as the five point stencil for the
Laplacian on gridded nodes.

Let $\Omega$ be a domain in $\R^d$, and $\Omega\subset D\subset\R^d$. 
Denote by $k_L$ and $k_B$
the orders of the operators $\Lcal$ and $\Bcal$, respectively.
Suppose we use polyharmonic numerical differentiation formulas
with $q=q_L>k_L$ for the operator $\Lcal$, and $q=q_B>k_B$ for $\Bcal$.
Under appropriate assumptions on the sets of influence
$X_{L,i} \deq \{x_j:j\in S_{L,i}\}$, and
$X_{B,i} \deq \{x_j:j\in S_{B,i}\}$, such
as quasi-uniformity and boundedness of the
polynomial norming constants, the errors \eqref{eq:epsL} and
\eqref{eq:epsB} of the polyharmonic numerical differentiation
 can be estimated as 
\begin{align*}
\eps_L(u)=\norm{\Lcal u \restrict{Y} - L\Psi(u)}_\infty
=\max_{1\le i\le N_Y}\Big|\Lcal u (y_i) -
\sum_{j\in S_{L,i}} \ell_{i j} \, u(x_j)\Big|\le 
C_{L}h_{L}^{q_L-k_L}|u|_{W^{q_L}_\infty(D)},\\
\eps_B(u)=\norm{\Bcal u \restrict{Z} - B\Psi(u)}_\infty
=\max_{1\le i\le N_Z}\Big|\Bcal u (z_i) -
\sum_{j\in S_{B,i}} b_{i j} \, u(x_j)\Big|\le 
C_{B}h_{B}^{q_B-k_B}|u|_{W^{q_B}_\infty(D)},
\end{align*}
where $h_L$ denotes the maximum diameter of the sets
$\{y_i\}\cup X_{L,i}$ for $i=1,\ldots,N_Y$,
$h_B$ is the maximum diameter of the sets
$\{z_i\}\cup X_{B,i}$ for $i=1,\ldots,N_Z$,
and $C_L,C_B$ are some positive constants
independent of $h_L$, $h_B$, and $u$.
Note that a simple method for the selection of the sets of
influence that ensures in practice that the errors behave in
accordance with these estimates is to compose $X_{L,i}$ of
$2\dim\Pi^d_{q_L}$ closest nodes of $y_i$ in $X$, and $X_{B,i}$
of $2\dim\Pi^d_{q_B}$ closest nodes of $z_i$, as recommended in
\cite{bayona2017role} for RBF-FD.
Further methods that try to optimize the selection can be
found in \cite{DOT23} and references therein.
In any case, the number of nodes in $X_{L,i}$ and $X_{B,i}$
remains bounded if $q_L$ and $q_B$ are fixed, which in turn
implies that the matrices $L$ and $B$ are sparse and that
the diameters $h_{L}$ and $h_{B}$ shrink as the density
of the nodes increases.

Assuming that $k_B=k_L-1$, as in both the elliptic and the divergence
settings of Section~\ref{ssec:choice-of-operators-Lcal-and-Bcal},
we get the same approximation order for  $\eps_L(u)$ and
$\eps_B(u)$ by choosing $q_B=q_L-1$. 
Hence, by Corollary~\ref{cor:split-error-estimates} 
we obtain the following estimates for any
quadrature formulas satisfying
\eqref{eq:linear-system-fhat-ghat}, 
\begin{align}
\label{eq:fbamfd}
\absB{\int_\Omega f \dmu - \sum_{i=1}^{N_Y} \muw_i f(y_i)}
&\leq  C\,
	\big(\norm{\muw}_1+ \norm{\sigmav}_1 \big)h^{q-k}\inf_{u \in \Ucal_f}
	\max\big\{|u|_{W^{q-1}_\infty(D)},|u|_{W^{q}_\infty(D)}\big\}, \\
\label{eq:gbamfd}
\absB{\int_{\partial\Omega} g \dsigma - \sum_{i=1}^{N_Z} \sigmav_i g(z_i)}
&\leq  C\,
	\big(\norm{\muw}_1+ \norm{\sigmav}_1 \big)h^{q-k}\inf_{u \in \Ucal_g}
	\max\big\{|u|_{W^{q-1}_\infty(D)},|u|_{W^{q}_\infty(D)}\big\},
\end{align}
where 
$$
C=\max\{C_L,C_B\},\quad h=\max\{h_L,h_B\},\quad q=q_L=q_B+1,\quad
k=k_L=k_B+1.$$

These estimates show that, as soon as the stability constants
$\norm{\muw ^*}_1$ and $\norm{\sigmav ^*}_1$ for the weights
computed by Algorithm~\ref{algo:abstract-algorithm} remain bounded
for $h \to 0$ and the sets $\Ucal_f$ and $\Ucal_g$
contain functions $u\in W^q_\infty(D)$,
the errors of the two quadrature formulas behave as
$\Ocal(h^{q-k})$.
In order to relate the convergence order to the smoothness of the
integrands $f$ and $g$, we need regularity results for the
boundary value problems \eqref{eq:bvp-split-error-estimates},
such as those discussed in
Section~\ref{ssec:choice-of-operators-Lcal-and-Bcal}. 

For example, in the setting of a smooth domain
$\Omega\subset\R^d$ and operators $\Lcal=\Delta$,
$\Bcal=\partial_\nu$ (with $k=2$), assuming that $\fhat$
and $\ghat$ are infinitely differentiable, we may use
Proposition~\ref{prop:well-posedness-neumann} to infer that 
there exists $u_f\in U_f$ that belongs to $H^{s+k}(\Omega)$
as soon as $f\in H^{s}(\Omega)$.
By the Sobolev embedding theorem,  $H^{s+k}(\Omega)\subset
W^q_\infty(\Omega)$ if $s+k>q+d/2$, and by the Stein extension
theorem $u_f$ may be extended to a function in $W^q_\infty(D)$.
Therefore, the estimate 
\begin{equation}\label{eq:fb}
\absB{\int_\Omega f \dmu - \sum_{i=1}^{N_Y} \muw_i^* f(y_i)}
\le C_1\big(\norm{\muw ^*}_1+ \norm{\sigmav ^*}_1 \big)h^{q-k}
	\|f\|_{H^{s}(\Omega)}
\end{equation}
holds for all functions $f\in H^{s}(\Omega)$ with $s>q-k+d/2$,
where $C_1$ is independent of $f$.
Similarly, we derive from
Proposition~\ref{prop:well-posedness-neumann}  that  there exists
$u_g\in U_g$ that belongs to $H^{s+k-1/2}(\Omega)$ as soon as $g\in
H^{s}(\partial\Omega)$. By the same arguments with the Sobolev
and Stein theorems, $u_g$ may be extended to a  function in
$W^q_\infty(D)$ if $s+k-1/2>q+d/2$, and we obtain the estimate
\begin{equation}\label{eq:gb}
\absB{\int_{\partial\Omega} g \dsigma - \sum_{i=1}^{N_Z} \sigmav_i^* g(z_i)}
\le C_2\big(\norm{\muw ^*}_1+ \norm{\sigmav ^*}_1 \big)h^{q-k}
	\|g\|_{H^{s}(\partial\Omega)}
\end{equation}
for all functions $g\in H^{s}(\partial\Omega)$ with $s>q-k+(d+1)/2$,
where $C_2$ is independent of $g$.
Note that both estimates \eqref{eq:fb} and \eqref{eq:gb} are
supposed to be suboptimal as they possibly
require a higher smoothness of $f$ and $g$ than strictly needed
for the $\mathcal{O}(h^{q-k})$ convergence order of the
quadrature, since the
natural assumption for this should be $f\in H^{q-k}(\Omega)$ and $g\in
H^{q-k}(\partial\Omega)$, respectively, see 
\cite[Chapter~4]{sobolev1997theory}. 

The estimates \eqref{eq:fb} and \eqref{eq:gb} with $k=1$ are obtained 
in the same way for the operators $\Lcal=\diver$, $\Bcal=\gamma_\nu$ if 
$\Omega\subset\R^d$ is smooth, because the regularity of the solution of 
\eqref{eq:bvp-split-error-estimates} follows from 
Proposition~\ref{prop:well-posedness-neumann} by making
use of the gradient of the solution to the corresponding
Neumann problem \eqref{eq:bvp-laplacian}.

\subsubsection*{Choosing discretization nodes}
In practical applications, quadrature formulas often
need to be defined on fixed, user-supplied nodes,
so in Algorithm~\ref{algo:abstract-algorithm} we do not assume
to be in control of the placement of quadrature
nodes $Y$ and $Z$.
The set $X\subset \overline{D}$ of discretization nodes
that determines the operator $\Psi$, however, can be
chosen freely, and so it must be computed as part of the algorithm.

First of all, we have to choose $D$, which may coincide with
$\Omega$, but it may also be a larger domain in $\Mcal$ or in
the ambient space $\Acal$, for example a bounding box around
$\Omega$. While exploring these options numerically, we have
seen that using $D=\Omega$ and irregular $X$ was consistently
a better choice than a bounding box $D$ with a Cartesian grid $X$,
see the numerical results in Section~\ref{sssec:choice-of-nodes}.

Even though a discretization set $X$ fitting $\Omegabar$ may be
obtained via mesh generation, for example by taking $X$
as the vertices of a triangulation of $\Omega$,
we only need meshless nodes for our purposes.
The nodes need not be connected into grids or networks,
which simplifies node generation algorithms,
especially for complicated 3D domains or surfaces.
For a survey on the topic of meshless
node generation, see \cite{suchde-point}.

The generation of $X$ should take into account multiple
criteria that in part contradict one another.
On the one hand, for any fixed quadrature nodes $Y$ and $Z$,
we want the set $X$ to be as large as possible, in order
to improve the errors $\eps_L(u)$ and $\eps_B(u)$
of the numerical differentiation thanks to
a higher density of $X$ in $\Omegabar$.
On the other hand, a smaller set $X$ produces
more stable quadrature formulas. Indeed, let us consider
two nested sets of nodes $X \subset \tilde X$, 
and corresponding matrices $L,B,\tilde L,\tilde B$ so that
$L,B$ are submatrices of $\tilde L,\tilde B$. Then,
the solution $(\tilde\muw^*,\tilde\sigmav^*)$ to
\eqref{eq:linear-system-fhat-ghat} with $\tilde L,\tilde B$
that minimizes the 1-norm $\norm{\muw}_1 + \norm{\sigmav}_1$
satisfies not just
$\tilde L^T \tilde\muw^* - \tilde B^T \tilde\sigmav^* = 0$,
but also $L^T \tilde\muw^* - B^T \tilde\sigmav^* = 0$,
and so $\norm{\muw^*}_1 + \norm{\sigmav^*}_1
\le\norm{\tilde\muw^*}_1 + \norm{\tilde\sigmav^*}_1$.
Moreover, as soon as $X$ is large enough so that $M+1>N_Y +
N_Z$, the linear system \eqref{eq:linear-system-fhat-ghat}
is overdetermined, i.e.\ it has more equations than unknowns,
and so it may not admit solutions anymore.
In general, $X$ should not be unnecessarily irregular,
although a higher density of the nodes may be
advantageous near the boundary of $\Omega$, especially in the
vicinity of its corners or fine features. In the numerical tests we 
only consider quasi-uniform nodes.
For more details on the distribution of discretization nodes $X$
and how the size $N_X$ should be chosen, see
Section~\ref{sssec:choice-of-nodes}.

\subsubsection{Numerical differentiation by tensor-product splines}
\label{sssec:splines}

Let $\Scal$ be an $M$-dimensional linear subspace of $\Dcal$
spanned by a basis $\{s_1,\dots,s_M\}\subset \Dcal$.
Any map 
\[
\Psi \colon \Dcal \to \R^M, \quad
\Psi(u) = \bigl( c_1(u), \dots, c_M(u) \bigr)^T,
\]
can be seen as a discretization operator that gives rise to 
the numerical differentiation  scheme 
\begin{align*}
\Lcal u (y_i)&\approx \Lcal \Big(
	\sum_{j=1}^M c_j(u) s_j \Big) (y_i)
	=\sum_{j=1}^M c_j(u)\Lcal s_j(y_i)
	=(L\Psi(u))_i,\quad i=1,\ldots,N_Y\\
\Bcal u (z_i)&\approx \Bcal \Big(
	\sum_{j=1}^M c_j(u) s_j \Big) (z_i)
	=\sum_{j=1}^M c_j(u)\Bcal s_j(z_i)
	=(B\Psi(u))_i,\quad i=1,\ldots,N_Z,
\end{align*}
given by the matrices $L \in \R^{N_Y \times M}$ and
$B \in \R^{N_Z \times M}$ with entries
\begin{equation}\label{LBS}
\ell_{i j} = \Lcal s_j(y_i)
\quad \text{and} \quad
b_{ij } = \Bcal s_j(z_i).
\end{equation}

Since these matrices do not depend on $\Psi$, the weights
$\muw ^*,\sigmav ^*$ computed by Algorithm~\ref{algo:abstract-algorithm} do
not depend on it either, and hence the choice of $\Psi$ is 
irrelevant for the implementation of our quadrature in this
setting. However, numerical differentiation errors
$$
\eps_L(u)=\max_{i=1,\dots,N_Y} \absB{\Lcal u (y_i) 
- \Lcal \Big(\sum_{j=1}^M c_j(u) s_j \Big)(y_i)},
\quad\eps_B(u)=\max_{i=1,\dots,N_Z} \absB{\Bcal u (z_i) 
- \Bcal \Big(\sum_{j=1}^M c_j(u) s_j \Big)(z_i)}$$  
of \eqref{eq:epsL}--\eqref{eq:epsB} do depend on $\Psi$.
Therefore we take the infimum of \eqref{eq:split-error-estimate-f} and
\eqref{eq:split-error-estimate-g} over all possible $\Psi$, which is equivalent to
the infimum over all $s=\sum_{j=1}^M c_j(u) s_j\in\Scal$,
and implies the estimates
\begin{align}
\label{eq:fbaS}
\absB{\int_\Omega f \dmu - \sum_{i=1}^{N_Y} \muw_i f(y_i)}
&\leq \inf_{u \in \Ucal_f} \inf_{s\in\Scal}\Bigl\{
	\norm{\muw}_1 \|\Lcal (u-s)|_Y\|_\infty
	+ \norm{\sigmav}_1 \|\Bcal (u-s)|_Z\|_\infty
\Bigr\}, \\
\label{eq:gbaS}
\absB{\int_{\partial\Omega} g \dsigma - \sum_{i=1}^{N_Z} \sigmav_i g(z_i)}
&\leq \inf_{u \in \Ucal_g} \inf_{s\in\Scal}\Bigl\{
	\norm{\muw}_1  \|\Lcal (u-s)|_Y\|_\infty
	+ \norm{\sigmav}_1\|\Bcal (u-s)|_Z\|_\infty
\Bigr\},
\end{align}
for any quadrature weights satisfying
\eqref{eq:linear-system-fhat-ghat}.

Note that the weights $\muw^*$ and $\sigmav^*$ do not depend on the 
choice of the basis  $\{s_1,\dots,s_M\}$ of a given space
$\Scal$. Indeed, if we choose another basis and let $V$
be the change-of-basis matrix, then the new differentiation
matrices are given by $\tilde{L} = L V$
and $\tilde{B} = B V$, hence the condition
$\tilde L^T\muw -\tilde B^T\sigmav
= V^{T} (L^T\muw-B^T\sigmav )=0$
is equivalent to $L^T\muw -B^T\sigmav =0$ because $V$ is invertible.
Nevertheless, the matrices $L$ and $B$ still depend on the
choice of the basis, and their properties strongly influence the
efficiency and stability of the computation of the weights. In
particular, in order to obtain sparse  matrices $L$ and $B$, we need
locally supported basis functions $s_j$, such that only a few of
them do not vanish in the vicinity of each point $y_i$ or $z_i$.

Although the space $\Scal$ can be chosen freely,
and in principle it would be interesting to compare
different approaches, in this work we only consider the case
of unfitted tensor-product spline approximation.
Alternative candidates for $\Scal$  that also possess locally
supported bases include finite element spaces, mapped
tensor-product splines used in the isogeometric analysis 
\cite{cottrell2009isogeometric} and various spline 
constructions on triangulations, see e.g.\
\cite{lai2007spline,schumaker2022approximation}.

Suppose that the manifold $\Mcal$ of dimension $d$ is embedded in
$\R^n$, with $n\ge d$,  and let $\bbox$ be the $n$-dimensional
bounding box around $\Omegabar$ such that the lengths of its
sides are integer multiples of a parameter $h_{\Scal} > 0$:
\[
\Omegabar \subset \bbox = [a_1, b_1] \times \dots \times [a_n, b_n],
\qquad b_i - a_i = N_i \, h_{\Scal}
\quad \text{for all $i = 1,\dots,n$}.
\]
For each dimension, let $T_i$ be the uniform knot vector
\begin{equation} \label{eq:knotv}
T_i = \{ a_i, \dots, a_i,
a_i + h_{\Scal}, \dots, b_i - h_{\Scal},
b_i, \dots, b_i \},
\end{equation}
where the first and last knots are repeated $q$ times,
and let $\Scal_i$ be the univariate spline space
of order $q$ (degree $q-1$) defined by the knot vector $T_i$.
We define $\Scal$ to be the restriction of the
tensor-product spline space
$\Scal_1 \otimes \dots \otimes \Scal_n$
to  $\Omegabar$, and a natural basis
for $\Scal$ is given by the restriction to $\Omegabar$
of those tensor-product B-splines $s_1,\ldots,s_M$,
whose supports have non-empty intersection with $\Omega$.
In the case $d<n$ this is known as the \emph{ambient},
or \emph{immersed} approach \cite{lehmann-ambient}.
Clearly, an upper bound for $M = \dim\Scal$ is given by
\[
\prod_{i=1}^n \dim\Scal_i
= \prod_{i=1}^n \bigl( N_i + q - 1 \bigr),
\]
although $M$ will often only be a 
fraction of this upper bound, for example when
$n>d$, or when $n=d$ but $\abs{\Omega} \ll \abs{\bbox}$.
We denote by $D$ the interior of the union of the supports of all
B-splines in $\Scal$. Then $\overline D$ may also be considered
as the domain of definition of the splines in $\Scal$.

B-splines $s_j$ have local supports contained in cubes with side
length $qh_\Scal$, and so the matrices $L$ and $B$ are sparse as
each of their rows has at most $q^n$ nonzeros. Note that the
assembly of $L$ and $B$ does not require testing
whether the intersection of the support of $s_j$
with $\Omega$ is non-empty: we can simply discard
any basis function such that $s_j|_{Y\cup Z} \equiv 0$,
as done in Algorithm~\ref{algo:spline}.
 
Assume that $n=d$, such that  $\Omega\subset D$ are domains in $\R^d$.
As before, we denote by $k_L$ and $k_B$
the orders of the operators $\Lcal$ and $\Bcal$, respectively,
with $\max\{k_L,k_B\}< q$.
Then it follows by \cite[Theorem 5.1]{deboor-spline} that
for any $u\in W^{q}_\infty(D)$ there exists $s\in\Scal$
such that simultaneously
$$
\|\Lcal (u-s)|_Y\|_\infty\le C_{L}h_{\Scal}^{q-k_L}|u|_{W^{q}_\infty(D)}
\quad\text{and}\quad 
\|\Bcal (u-s)|_Z\|_\infty\le
C_{B}h_{\Scal}^{q-k_B}|u|_{W^{q}_\infty(D)},$$
where $C_L,C_B$ are some positive constants independent of $h_L,h_B$ and $u$.
Therefore, we obtain from \eqref{eq:fbaS} and \eqref{eq:gbaS}
\begin{align}
\label{eq:fbaBS}
\absB{\int_\Omega f \dmu - \sum_{i=1}^{N_Y} \muw_i f(y_i)}
&\leq \inf_{u \in \Ucal_f} \Bigl\{
	C_L\norm{\muw}_1 h_{\Scal}^{q-k_L}
	+ C_B\norm{\sigmav}_1h_{\Scal}^{q-k_B} 
\Bigr\}|u|_{W^{q}_\infty(D)}, \\
\label{eq:gbaBS}
\absB{\int_{\partial\Omega} g \dsigma - \sum_{i=1}^{N_Z} \sigmav_i g(z_i)}
&\leq \inf_{u \in \Ucal_g} \Bigl\{
	C_L\norm{\muw}_1 h_{\Scal}^{q-k_L}
	+ C_B\norm{\sigmav}_1h_{\Scal}^{q-k_B} 
\Bigr\}|u|_{W^{q}_\infty(D)},
\end{align}
as long as $\Ucal_f$ and $\Ucal_g$ contain functions extensible
to $D$ with a finite seminorm $|u|_{W^{q}_\infty(D)}$, and the
weight vectors $w,v$ satisfy \eqref{eq:linear-system-fhat-ghat}.

Similar to Section~\ref{sssec:mfd}, for a smooth domain
$\Omega\subset\R^d$ and operators $\Lcal=\Delta$,
$\Bcal=\partial_\nu$ with $k_L=2$, $k_B=1$, or operators 
$\Lcal=\diver$, $\Bcal=\gamma_\nu$ with $k_L=1$, $k_B=0$,
assuming that $\fhat$
and $\ghat$ are infinitely differentiable, we may use
Proposition~\ref{prop:well-posedness-neumann}, the Sobolev
embedding theorem, and the Stein extension
theorem, to obtain the $\Ocal(h_{\Scal}^{q-k_L})$
estimates 
\begin{equation}\label{eq:fbBS}
\absB{\int_\Omega f \dmu - \sum_{i=1}^{N_Y} \muw_i^* f(y_i)}
\le C_1\big(\norm{\muw ^*}_1+ \norm{\sigmav ^*}_1 \big)h_{\Scal}^{q-k_L}
	\|f\|_{H^{s}(\Omega)}
\end{equation}
for $f\in H^{s}(\Omega)$ with $s>q-k_L+d/2$, and
\begin{equation}\label{eq:gbBS}
\absB{\int_{\partial\Omega} g \dsigma - \sum_{i=1}^{N_Z} \sigmav_i^* g(z_i)}
\le C_2\big(\norm{\muw ^*}_1+ \norm{\sigmav ^*}_1 \big)h_{\Scal}^{q-k_L}
	\|g\|_{H^{s}(\partial\Omega)}
\end{equation}
for $g\in H^{s}(\partial\Omega)$ with $s>q-k_L+(d+1)/2$,
where the constants $C_1$ and $C_2$ are independent of $f$ and $g$.

We refer to \cite{lehmann-ambient} for approximation 
results by ambient tensor-product splines when $\Omega$ is a
compact closed hypersurface. %

Elements of the basis of $\Scal$ obtained by
restricting B-splines to $\Omega$
may have arbitrarily small support, and this is a known
cause of instability in some numerical schemes
of immersed type, see for example 
\cite{chu2022stabilization,davydov2014two,de2023stability}.
In the context of Algorithm~\ref{algo:abstract-algorithm},
however, we did not encounter situations where
instability of $w$ and $v$ would likely be caused by small 
cut elements, and so we did not
investigate the use of any special stabilization techniques,
see the remarks at the end of Section~\ref{sec:numerical-tests}
for more information.

\subsection{On the choice of minimization norm}
\label{ssec:on-the-choice-of-minimization-norm}

As we have seen in Section~\ref{ssec:on-the-choice-of-L-and-B},
Algorithm~\ref{algo:abstract-algorithm} leads to quadrature
formulas of high convergence order assuming sufficient regularity
of the boundary value problem \eqref{eq:bvp-joint-error-estimate},
the use of an appropriate numerical differentiation scheme,
and stability of the quadrature formulas, that is,
$\norm{\muw }_1 = \Ocal(1)$ and $\norm{\sigmav }_1 = \Ocal(1)$.
This suggests to employ the combined 1-norm
\[
\|(\muw,\sigmav)\|_\sharp
= \|(\muw,\sigmav)\|_1 = \norm{\muw}_1+\norm{\sigmav}_1
\]
in Step 5 of the algorithm.

However, the main reason why the 1-norm appears in the
estimates is that in the error analysis it was convenient
to use the 1-norms
of the quadrature weight vectors $\muw$, $\sigmav$,
and the $\infty$-norm of the error of numerical differentiation.
In fact, H\"older's inequality with any pair of conjugated 
exponents $p,p' \in [1,\infty]$ such that $1/p + 1/p' = 1$,
leads to the estimates 
\begin{align*}
\absB{\int_\Omega f \dmu
	- \sum_{i=1}^{N_Y} \muw_i f(y_i)}
&\leq \inf_{u \in \Ucal_f} %
\Bigl\{
	\norm{\muw}_p \norm{\Lcal u \restrict{Y} - L\Psi(u)}_{p'}
	+ \norm{\sigmav}_p \norm{\Bcal u \restrict{Z} - B\Psi(u)}_{p'}
\Bigr\}, \\
\absB{\int_{\partial\Omega} g \dsigma
	- \sum_{i=1}^{N_Z} \sigmav_i g(z_i)}
&\leq \inf_{u \in \Ucal_g} %
\Bigl\{
	\norm{\muw}_p \norm{\Lcal u \restrict{Y} - L\Psi(u)}_{p'}
	+ \norm{\sigmav}_p \norm{\Bcal u \restrict{Z} - B\Psi(u)}_{p'}
\Bigr\},
\end{align*}
where the pair $p=1$ and $p'=\infty$ as in \eqref{eq:fba}
and \eqref{eq:gba} is not necessarily optimal.

Nevertheless, there are still good reasons why one may prefer
the minimization of the 1-norm of the quadrature weights.
First of all, the 1-norm is the standard way
to assess stability of a quadrature formula: any
perturbation of size $\varepsilon > 0$ in the values
$f(y_i)$ and $g(z_i)$ is potentially amplified by
$\norm{\muw}_1$ and $\norm{\sigmav}_1$ when computing the sums
$\sum_{i=1}^{N_Y} \muw_i f(y_i)$ and 
$\sum_{i=1}^{N_Z} \sigmav_i g(z_i)$.

Second, minimizing the 1-norm can be a way to obtain
\emph{positive formulas} with non-negative weights
$\muw_i\ge0$ or $\sigmav_i\ge0$, which is a highly
desirable property in many applications, although
far from necessary for achieving stability of numerical
integration in general. If we know the measures
\[
\abs{\Omega} = \int_\Omega 1 \dmu, \qquad
\abs{\partial\Omega} = \int_{\partial\Omega} 1 \dsigma,
\]
of the domain and its boundary,
then we can choose $(\fhat,\ghat) \equiv (1,-1)$
as auxiliary functions, so that
\begin{equation}\label{comb_exact}
\sum_{i=1}^{N_Y} \muw_i +\sum_{i=1}^{N_Z} \sigmav_i = \abs{\Omega} 
+ \abs{\partial\Omega}
\end{equation}
holds for all quadrature formulas $(Y,\muw)$ and $(Z,\sigmav)$
satisfying \eqref{eq:linear-system-fhat-ghat}. Hence,
\[
\abs{\Omega} + \abs{\partial\Omega}
\leq \norm{\muw}_1 + \norm{\sigmav}_1,
\]
and equality is attained if and only if the weights
$\muw$ and $\sigmav$ are non-negative. If a pair of non-negative
weight vectors $(\muw,\sigmav)$ satisfying system
\eqref{eq:linear-system-fhat-ghat} exists, then it will be found
as $(\muw^*,\sigmav^*)$ by 1-norm minimization, and the combined
1-norm $\|(\muw^*,\sigmav^*)\|_1$ will be equal to
$\abs{\Omega} + \abs{\partial\Omega}$,
implying excellent stability.
Alternatively, knowing $\abs{\Omega}$ and $\abs{\partial\Omega}$ we 
may ensure that both formulas $(Y,\muw)$ and $(Z,\sigmav)$ 
are exact for constants by requiring two separate conditions 
\begin{equation}\label{sep_exact}
\sum_{i=1}^{N_Y} \muw_i = \abs{\Omega}\quad\text{and}\quad 
\sum_{i=1}^{N_Z} \sigmav_i = \abs{\partial\Omega}
\end{equation}
instead of \eqref{comb_exact}. Note that when imposing one or both
conditions in \eqref{sep_exact} there is no guarantee that 1-norm 
minimization delivers a positive formula for either $w$ or $v$, 
even if they exist, because we minimize the sum
$\|w\|_1+\|v\|_1$ rather than $\|w\|_1$ or $\|v\|_1$ separately.

When using more than one auxiliary condition, for example by
imposing exactness of the combined quadrature formula for two
pairs of functions $(\fhat_1,\ghat_1)$ and
$(\fhat_2,\ghat_2)$ as in \eqref{sep_exact}, the error analysis
in Proposition~\ref{prop:joint-error-estimate}
can be modified to take this into account.
The boundary value problem \eqref{eq:bvp-joint-error-estimate}
is replaced by
\begin{equation} \label{eq:bvp-two-pairs-fhat-ghat}
\begin{cases}
\Lcal u = f -\alpha_1 \fhat_1 -\alpha_2 \fhat_2
	& \text{in $\Omega$}, \\
\Bcal u = g -\alpha_1 \ghat_1 -\alpha_2 \ghat_2
	& \text{on $\partial\Omega$},
\end{cases}
\end{equation}
the term $\abs{\alpha} \hat{\varepsilon}$ in
inequality \eqref{eq:joint-error-estimate} becomes
\[
\abs{\alpha_1} \hat{\varepsilon}_1
+ \abs{\alpha_2} \hat{\varepsilon}_2,
\quad \text{with $\hat{\varepsilon}_1 = |\delta(\fhat_1,\ghat_1)|$
and $\hat{\varepsilon}_2 = |\delta(\fhat_2,\ghat_2)|$},
\]
and by taking the infimum over all pairs of coefficients 
$\alpha_1,\alpha_2$ satisfying the  compatibility condition
$$
\int_\Omega f -\alpha_1 \fhat_1 -\alpha_2 \fhat_2\dmu
= \int_{\partial\Omega} g -\alpha_1 \ghat_1 -\alpha_2
\ghat_2\dsigma$$
and over all solutions $u$ of \eqref{eq:bvp-two-pairs-fhat-ghat} for
these pairs $(\alpha_1,\alpha_2)$, we may improve the estimate 
\eqref{eq:joint-error-estimate} compared to a single
pair of auxiliary functions $(\fhat_1,\ghat_1)$ or
$(\fhat_2,\ghat_2)$. However, in our numerical experiments in 
Section~\ref{sssec:choice-fghat}
 we did not observe a significant improvement
of the accuracy of the quadrature formulas
when using more than one pair $(\fhat,\ghat)$.

From a computational point of view, any 1-norm minimization
problem can be reformulated as a linear program
with twice as many unknowns, so any general purpose
linear programming solver can be used for 1-norm
minimization. When the simplex algorithm is used,
\emph{sparse} interior quadrature formulas $\muw$
with no more than $m$ nonzero weights can be obtained,
with $m$ being the total number of rows in the
linear system \eqref{eq:linear-system-fhat-ghat}.

The choice $p = 2$ is very attractive from the computational
point of view: efficient linear algebra routines
based on e.g.\ QR decomposition are available
to find a solution of system
\eqref{eq:linear-system-fhat-ghat}
with minimal combined 2-norm
\[
\norm{(\muw,\sigmav)}_2
\deq \sqrt{\norm{\muw}_2^2 + \norm{\sigmav}_2^2}.
\]
Moreover, the strict convexity
of the 2-norm guarantees uniqueness of the optimal
solution $(\muw^*,\sigmav^*)$, a property that may not
hold for the 1-norm.
For a numerical comparison of 1-norm
and 2-norm minimization, we refer to the experiments
in Section~\ref{sssec:choice-norm}.

More generally, one can consider a weighted 2-norm
\[
\norm{(\muw,\sigmav)}_{2,\Lambda}
\deq \sqrt{\lambda_1 \norm{\muw}_2^2 + \lambda_2 \norm{\sigmav}_2^2}
\]
to balance the asymptotic sizes of
$\norm{w}_2$ and $\norm{v}_2$ as the spacing
parameters $h$ and $h_{\Scal}$
introduced in Section~\ref{ssec:on-the-choice-of-L-and-B}
approach zero, under the assumptions of quasi-uniform nodes
and the expectation that 
\[
\norm{w}_\infty = \Ocal(N_Y^{-1}),
\quad \norm{v}_\infty = \Ocal(N_Z^{-1}),
\]
where $N_Z\ll N_Y$ as $Z\subset\partial\Omega$.
Although this idea is theoretically sound because it is easy
to see that the unweighted 2-norm is unbalanced,
numerical experiments do not show significant improvements in the
accuracy and stability of our quadrature formulas
after switching to a suitably balanced weighted 2-norm.
For this reason, and to keep complexity to a minimum,
we have decided not to investigate the use of weighted
2-norms in this paper.

\subsection{Practical choices for effective algorithms}
\label{ssec:palgos}

After discussing in
Sections~\ref{ssec:on-the-choice-of-fhat-and-ghat}--\ref{ssec:on-the-choice-of-minimization-norm} 
various options available for the realization of the framework
outlined in Algorithm~\ref{algo:abstract-algorithm}, we now
describe two particular settings 
tested in the numerical experiments of
Section~\ref{sec:numerical-tests} and recommended for practical
applications. 
The evidence gathered in our experiments suggests
that the choices presented below in Algorithms~\ref{algo:mfd} and
\ref{algo:spline} are essentially optimal among
the ones that will be compared in
Section~\ref{ssec:validation-of-parameters}, and therefore
represent a good starting point for applications
and future research.
The first algorithm is based on meshless finite difference
formulas, whereas the second one is based on collocation
of tensor-product spline spaces defined on a bounding
box around $\Omega$.
In this way we demonstrate that both the functional and
meshless finite difference approaches have effective
realizations. 

We only consider 
domains $\Omega$ in $\R^d$, even if the methods are applicable
to surfaces and other manifolds.
We assume that $|\partial\Omega|$
is known, and choose
\[
(\fhat,\ghat) \equiv (0,1),
\qquad \Lcal = \diver,
\qquad \Bcal = \gamma_\nu,
\qquad \norm{(\muw,\sigmav)}_\sharp = \norm{(\muw,\sigmav)}_2
= \sqrt{\norm{\muw}_2^2 + \norm{\sigmav}_2^2}.
\]
For simplicity, we assume that the quadrature nodes in $Y$ and $Z$, 
although irregular in general, are not intentionally generated
with density varying over the domain or its boundary. Therefore
we characterize their density by a single
\emph{spacing parameter} $h > 0$, meaning that $Y$ and $Z$ are 
produced by some node generation method with a uniform target
spacing depending on $h$. We assume that the \emph{packing
spacing} of the node sets, defined as 
\begin{equation}\label{packsp}
h_{ps}(Y) \deq \left( \frac{\abs{\Omega}}{N_Y} \right)^{1/d}, \quad
h_{ps}(Z) \deq \left( \frac{\abs{\partial\Omega}}{N_Z}\right)^{1/(d-1)},
\end{equation}
is  proportional to the spacing parameter $h$ of the node
generation method. Note that $h_{ps}(Y)$ is the step size 
of a uniform Cartesian grid with $N_Y$ nodes in a
$d$-dimensional cube of measure $|\Omega|$, and similarly for $h_{ps}(Z)$.
For some methods we may expect that $h_{ps}(Y) \approx h$ and $h_{ps}(Z)
\approx h$, but in other cases the packing spacing may be larger
or smaller than the spacing parameter of the method by some factor.
We expect the algorithms proposed below to perform
well if the nodes are of sufficiently high quality,
see the discussion of node quality measures
and meshless node generation algorithms in
\cite{suchde-point}. Node sets obtained by some of these 
algorithms are tested in Section~\ref{sssec:choice-of-nodes},
which leads to a recommendation to use advancing front method
for node generation unless the nodes are already
provided by the user.

We assume that, in addition to supplying the quadrature nodes
in $Y$ and $Z$, the user provides sufficiently accurate
outward-pointing unit normals at the boundary
nodes of $Z \subset \partial\Omega$,
and, in the case of Algorithm~\ref{algo:mfd}, is in position to
generate an additional node set $X$ in $\Omegabar$ targeting
a prescribed spacing parameter $h_X > 0$.

\begin{algotheorem} \label{algo:mfd}
Approach based on meshless finite difference formulas (MFD).\\[0.5em]
\textbf{Input:} Sets of quadrature nodes $Y\subset\Omegabar$
and $Z\subset \partial\Omega$ with spacing parameter $h>0$.\\
\textbf{Input:} Outward-pointing unit normals $\nu|_Z$
at the nodes in $Z$.\\
\textbf{Input:} Measure $\abs{\partial\Omega}$ of the boundary.\\
\textbf{Input:} Polynomial order $q \geq 2$.\\[0.5em]
\textbf{Output:} Quadrature weights $w$ and $v$.
\begin{algorithmic}[1]
\vspace*{0.5em}
\State Set the spacing parameter $h_X=1.6\,h$.
\State Generate a node set $X$ of $N_X$ nodes in $\Omegabar$ according to
	the spacing parameter $h_X$.
\State Generate a k-d tree to facilitate nearest neighbors
	searches on $X$.
\State Set \[ n_L = 2 \binom{q-1+d}{d}
	\qquad \text{and} \qquad n_B = 2 \binom{q-2+d}{d}. \]
\For{$i=1,\dots,N_Y$}
	\State Query the k-d tree and store in $S_{L,i}$
	the indices of the $n_L$ nodes in $X$ closest to $y_i$.
\EndFor
\For{$i=1,\dots,N_Z$}
	\State Query the k-d tree and store in $S_{B,i}$
	the indices of the $n_B$ nodes in $X$ closest to $z_i$.
\EndFor
\For{$k = 1, \dots, d$}
	\State \parbox[t]
	{\dimexpr\textwidth-\leftmargin-\labelsep-\labelwidth}
	{Assemble the sparse matrix $L_k \in \R^{N_Y \times N_X}$
	whose nonzero elements $\ell_{kij}$ are differentiation
	weights for the discretization of operator $\partial_{x_k}$
	on nodes $X$ and $Y$:
	\[
	\partial_{x_k} u(y_i)
	\approx \sum_{j \in S_{L,i}} \ell_{kij} u(x_j).
	\]
	The weights $\ell_{kij}$ are generated using
	polyharmonic radial basis
	kernels with power $2q-1$ and polynomial term in $\Pi_q^d$,
	as explained in Section \ref{sssec:mfd}.}
	\State Assemble the diagonal matrix
	$D_k \in \R^{N_Z \times N_Z}$ whose elements
	are the $k$-th components of $\nu|_Z$.
\EndFor
\State \parbox[t]
	{\dimexpr\textwidth-\leftmargin-\labelsep-\labelwidth}
	{Assemble the sparse matrix $\tilde B \in \R^{N_Z \times N_X}$
	whose nonzero elements $b_{ij}$ are approximation weights
	for the pointwise values at the nodes in $Z$ from the nodes of $X$:
	\[
	u(z_i) \approx \sum_{j \in S_{B,i}} b_{ij} u(x_j).
	\]
	The weights are generated using polyharmonic radial basis
	kernels with power $2q-3$ and polynomial term in $\Pi_{q-1}^d$,
	as explained in Section \ref{sssec:mfd}.}
\State Concatenate $L_1,\dots,L_d$ horizontally
	into $L \in \R^{N_Y \times dN_X}$.
\State Concatenate the products $D_1 \tilde B,\dots,D_d \tilde B$
	horizontally into $B \in \R^{N_Z \times dN_X}$.
\State Assemble the sparse matrix
	\[ A = \begin{pmatrix}
	L^T & -B^T \\ 
	0 & \mathbbm{1}^T
	\end{pmatrix}, \]
	with $\mathbbm{1}$ being the all-ones column vector
	of dimension $N_Z$.
\State Assemble the right hand side
	$b = (0, \abs{\partial\Omega})^T$.
\State Compute the solution to $Ax = b$ with smallest 2-norm.
\State Split $x$ into subvectors $\muw \in \R^{N_Y}$
	and $\sigmav \in \R^{N_Z}$.
\end{algorithmic}
\end{algotheorem}

Note that in order to increase the performance of Algorithm~\ref{algo:mfd},
one should assemble the sparse matrices $L^T$ and $B^T$ directly,
instead of precomputing matrices $L_k$ and $D_k$ for each
spatial dimension separately.
Moreover, the index sets $S_{L,i}$ and $S_{B,i}$ can be
determined on the fly during the assembly procedure.
Nevertheless, we have chosen to present Algorithm~\ref{algo:mfd}
in the form above in order to enhance readability, and to clearly
show how $L^T$ and $B^T$ can be put together in the divergence
case, which has not been detailed out in
Section~\ref{ssec:on-the-choice-of-L-and-B}.
Indeed, we have assumed in that section that
functions in $\Dcal$ are scalar valued, but in the case
of $\Lcal = \diver$ and $\Bcal = \gamma_\nu$
the functions in $\Dcal$ are actually vector valued.

In the scalar case, the discretization operator
$\Psi \colon \Dcal \to \R^{N_X}$ is defined by pointwise
evaluation over $X$, see \eqref{eq:Psi-u}, and so the elements
in the vector $\Psi(u)$ follow the order of the nodes in $X$.
In the vector valued case, the discretization operator
$\Psi(F) \in \R^{dN_X}$ is defined by pointwise
evaluation over $X$ of all $d$ components of the vector field
$F \in \Dcal$, and so one must choose whether to order
the elements of $\Psi(F)$ so that all $d$ components for a
fixed node are contiguous, as in
\[
\Psi(F) = \bigl(
F_1(x_1), \dots, F_d(x_1),
\;\; \dots, \;\;
F_1(x_{N_X}), \dots, F_d(x_{N_X}) \bigr)^T,
\]
or so that all $N_X$ pointwise values for a fixed component
are contiguous, as in
\[
\Psi(F) = \bigl(
F_1(x_1), \dots, F_1(x_{N_X}),
\;\; \dots, \;\;
F_d(x_1), \dots, F_d(x_{N_X}) \bigr)^T.
\]
In Algorithm~\ref{algo:mfd} we have chosen the latter ordering,
and although the discretization operator $\Psi$ is not
used in the algorithm, the chosen ordering for $\Psi(F)$ clearly
determines the order of the columns of $L$ and $B$,
and hence their assembly.
In a high performance implementation, the order
that provides the fastest memory access should be preferred.
In any case, the final values of $\muw$ and $\sigmav$
do not depend on the chosen ordering for $\Psi(F)$,
because changing the order amounts to permuting the rows
of the linear system $Ax=b$.
Similar considerations about ordering also apply to the
next algorithm, based on collocation of vector fields
whose components are uniform tensor-product splines defined
on a bounding box around $\Omega$. A basis for this space
of vector fields is defined component-by-component using
the same scalar basis of B-splines for each component.
Once again, the ordering of basis elements in $\Psi(F)$
is completely arbitrary, and the one that provides the
fastest memory access should be preferred in practice.

\begin{algotheorem} \label{algo:spline}
Approach based on collocation of a tensor-product
spline space (BSP).\\[0.5em]
\textbf{Input:} Sets of quadrature nodes $Y\subset\Omegabar$
and $Z\subset \partial\Omega$ with spacing parameter $h>0$.\\
\textbf{Input:} Outward-pointing unit normals $\nu|_Z$
at the nodes in $Z$.\\
\textbf{Input:} Measure $\abs{\partial\Omega}$ of the boundary.\\
\textbf{Input:} Polynomial order $q \geq 2$.\\[0.5em]
\textbf{Output:} Quadrature weights $w$ and $v$.
\begin{algorithmic}[1]
\vspace*{0.5ex}
\State Set the knot spacing parameter $h_{\Scal} = 4 \, h$.
\State Compute $\bbox = [a_1,b_1] \times \dots \times [a_d,b_d]$,
	a bounding box around $\Omegabar$ such that the
	length of its sides are multiples of $h_{\Scal}$.
\For{$i = 1, \dots, d$}
	\State Compute the knot vector $T_i$
		on $[a_i,b_i]$ with uniform spacing $h_{\Scal}$,
		as defined in \eqref{eq:knotv}.
\EndFor
\State Let $\{s_1,\dots,s_{N_{\Scal}}\}$ be the B-spline basis
	of the tensor-product space
	$\Scal_1 \otimes \dots \otimes \Scal_d$,
	where each $\Scal_i$ is the univariate spline space
	with knot vector $T_i$ and degree $q-1$.
\For{$k = 1, \dots, d$}
	\State \parbox[t]
	{\dimexpr\textwidth-\leftmargin-\labelsep-\labelwidth}
	{Assemble the sparse collocation matrix
	$L_k \in \R^{N_Y \times N_{\Scal}}$ whose nonzero
	elements $\ell_{kij}$
	are given by the evaluation of the $k$-th partial derivative
	of $B$-splines at the nodes of $Y$:
	\[
	\ell_{kij} = \partial_{x_k} s_j(y_i)
	\]
	For each node $y_i$, only $q^d$ B-splines need to be evaluated,
	i.e.\ the ones whose support contains $y_i$.}
	\State Assemble the diagonal matrix
	$D_k \in \R^{N_Z \times N_Z}$ whose elements
	are the $k$-th components of $\nu|_Z$.
\EndFor
\State \parbox[t]
	{\dimexpr\textwidth-\leftmargin-\labelsep-\labelwidth}
	{Assemble the sparse collocation matrix
	$\tilde B \in \R^{N_Z \times N_{\Scal}}$
	whose nonzero elements $b_{ij}$
	are given by evaluation of $B$-splines at the nodes of $Z$:
	\[
	b_{ij} = s_j(z_i)
	\]
	For each node $z_i$, only $q^d$ B-splines need to be evaluated,
	i.e.\ the ones whose support contains $z_i$.}
\State Concatenate $L_1,\dots,L_d$ horizontally
	into $L \in \R^{N_Y \times dN_{\Scal}}$.
\State Concatenate the products $D_1 \tilde B,\dots,D_d \tilde B$
	horizontally into $B \in \R^{N_Z \times dN_{\Scal}}$.
\State Assemble the sparse matrix
	\[ A = \begin{pmatrix}
	L^T & -B^T \\ 
	0 & \mathbbm{1}^T
	\end{pmatrix}, \]
	with $\mathbbm{1}$ being the all-ones column vector
	of dimension $N_Z$. Any row whose elements are all zeros
	is not included in $A$. Such rows come from B-splines
	whose support is disjoint from $Y \cap Z$.
\State Assemble the right hand side
$b = (0, \abs{\partial\Omega})^T$.
\State Compute the solution to $Ax = b$ with smallest 2-norm.
\State Split $x$ into subvectors $\muw \in \R^{N_Y}$
and $\sigmav \in \R^{N_Z}$.
\end{algorithmic}
\end{algotheorem}

Once again, we have written Algorithm~\ref{algo:spline}
in a way that enhances readability rather than performance.

We conclude this section with three remarks.

First, we note that the set $X$ of Algorithm~\ref{algo:mfd} may
also be generated as a Cartesian grid over the bounding box $\bbox$
as defined in Algorithm~\ref{algo:spline}. This version of MFD
delivered acceptable results in our numerical experiments,
although it is inferior in accuracy and stability to
the approach described in the algorithm.
Generating a node set $X\subset\Omegabar$ targeting a prescribed 
spacing parameter $h_X$ is not a restrictive hypothesis in practice, 
because, whenever node generation is deemed expensive due to the
complexity of~$\Omegabar$, one may obtain $X$ by thinning 
the set $Y \cup Z$. 

Second, Algorithms~\ref{algo:mfd} and \ref{algo:spline} can be 
adapted to the setting where the quadrature nodes $Y$ and $Z$ 
are generated with locally varying density according to a
\emph{spacing function} $h(x) \colon \Omegabar \to \R_+$. In this
case, the set $X$ of Algorithm~\ref{algo:mfd} may be generated by
using a scaled spacing function $h_X(x) = c\, h(x)$ for some
$c>1$, either from scratch, or by a suitable
subsampling algorithm, see e.g.\ \cite{lawrence2024node}. 
In the case of Algorithm~\ref{algo:spline}, the spacing function
$h(x)$ may be used  to guide local refinement of the
tensor-product spline space.

Third, the choice of the coefficients in $h_X = 1.6 \, h$ and 
$h_{\Scal} = 4 \, h$ is justified by the numerical experiments of
Section~\ref{sec:numerical-tests}.
For now, we point out that these choices lead
by design to underdetermined systems $Ax=b$.
To see why, let us consider the case of Algorithm~\ref{algo:mfd}.
Assuming that $N_Y \gg N_Z$, the matrix $A$
has approximately $N_Y$ columns and $dN_X$ rows.
By the definitions of $h$ and $h_X$, assuming that 
$\big( \frac{\abs{\Omega}}{N_X} \big)^{1/d}\approx h_X $, we have
\[
\frac{d N_X}{N_Y}
\approx \frac{d \abs{\Omega} h_X^{-d}}{\abs{\Omega} h^{-d}}
= d (1.6)^{-d},
\]
and so the ratio of rows to columns is always smaller
than 1 for all $d \in \N$, with a peak of about 0.8
for $d=2$. A similar argument shows that the linear system
$Ax=b$ assembled in Algorithm~\ref{algo:spline} is
also underdetermined for small enough $h$.

\graphicspath{{./figures/}}

\pgfplotstableset{
	search path = ./tables,
	col sep = comma,
	string replace = {NaN}{},
	empty cells with = {-},
	set thousands separator = {},
	every column/.append style = %
		{zerofill, sci e, precision = 2}
}

\pgfplotstableset{
	columns/Nsamples/.style={
		column name=$N_{\text{seeds}}$, int detect
	},
	columns/hX/.style={
		column name=$h_X$, sci
	},
	columns/hY/.style={
		column name=$h$, sci
	},
	columns/hZ/.style={
		column name=$h_Z$, sci
	},
	columns/NX/.style={
		column name=$N_X$, int detect
	},
	columns/NY/.style={
		column name=$N_Y$, int detect
	},
	columns/NZ/.style={
		column name=$N_Z$, int detect
	},
	columns/NrowsA/.style={
		column name=$m$, int detect
	},
	columns/NrowsA_mfd/.style={
		column name=$m_{\text{MFD}}$, int detect
	},
	columns/NrowsA_bsp/.style={
		column name=$m_{\text{BSP}}$, int detect
	},
	columns/sizeratioA/.style={
		column name=$m/N$, fixed
	},
	columns/sizeratioA_mfd/.style={
		column name=$m_{\text{MFD}}/N$, fixed
	},
	columns/sizeratioA_bsp/.style={
		column name=$m_{\text{BSP}}/N$, fixed
	},
	columns/nnzA/.style={
		column name=$nnz(A)$, int detect
	},
	columns/nnzA_mfd/.style={
		column name=$nnz_{\text{MFD}}$, sci
	},
	columns/nnzA_bsp/.style={
		column name=$nnz_{\text{BSP}}$, sci
	},
	columns/residual/.style={
		column name=$\norm{Ax-b}$, sci
	},
	columns/tnodegen/.style={
		column name=$t_{\text{nodes}}$, sci
	},
	columns/tmatassembly/.style={
		column name=$t_{\text{mat}}$, sci
	},
	columns/tsolve/.style={
		column name=$t_{\text{solve}}$, sci
	},
	columns/sreintrungeavg/.style={
		column name=$e_{\text{avg}}(f_1)$, sci
	},
	columns/sreintrungestd/.style={
		column name=$e_{\text{std}}(f_1)$, sci
	},
	columns/sreintrungerms/.style={
		column name=$e_{\text{rms}}(f_1)$, sci
	},
	columns/sreintrungemax/.style={
		column name=$e_{\text{max}}(f_1)$, sci
	},
	columns/srebndrungeavg/.style={
		column name=$e_{\text{avg}}(g_1)$, sci
	},
	columns/srebndrungestd/.style={
		column name=$e_{\text{std}}(g_1)$, sci
	},
	columns/srebndrungerms/.style={
		column name=$e_{\text{rms}}(g_1)$, sci
	},
	columns/srebndrungemax/.style={
		column name=$e_{\text{max}}(g_1)$, sci
	},
	columns/sreintfrankeavg/.style={
		column name=$e_{\text{avg}}(f_2)$, sci
	},
	columns/sreintfrankestd/.style={
		column name=$e_{\text{std}}(f_2)$, sci
	},
	columns/sreintfrankerms/.style={
		column name=$e_{\text{rms}}(f_2)$, sci
	},
	columns/sreintfrankemax/.style={
		column name=$e_{\text{max}}(f_2)$, sci
	},
	columns/srebndfrankeavg/.style={
		column name=$e_{\text{avg}}(g_2)$, sci
	},
	columns/srebndfrankestd/.style={
		column name=$e_{\text{std}}(g_2)$, sci
	},
	columns/srebndfrankerms/.style={
		column name=$e_{\text{rms}}(g_2)$, sci
	},	
	columns/srebndfrankemax/.style={
		column name=$e_{\text{max}}(g_2)$, sci
	},	
	columns/sreintexpsumavg/.style={
		column name=$e_{\text{avg}}(f_3)$, sci
	},
	columns/sreintexpsumstd/.style={
		column name=$e_{\text{std}}(f_3)$, sci
	},
	columns/sreintexpsumrms/.style={
		column name=$e_{\text{rms}}(f_3)$, sci
	},
	columns/sreintexpsummax/.style={
		column name=$e_{\text{max}}(f_3)$, sci
	},
	columns/srebndexpsumavg/.style={
		column name=$e_{\text{avg}}(g_3)$, sci
	},
	columns/srebndexpsumstd/.style={
		column name=$e_{\text{std}}(g_3)$, sci
	},
	columns/srebndexpsumrms/.style={
		column name=$e_{\text{rms}}(g_3)$, sci
	},
	columns/srebndexpsummax/.style={
		column name=$e_{\text{max}}(g_3)$, sci
	},
	columns/stabw/.style={
		column name=$K_{\muw}$, fixed, dec sep align ={c}, precision = 2
	},
	columns/stabv/.style={
		column name=$K_{\sigmav}$, fixed
	},
	columns/stabwmax/.style={
		column name=$\max K_{\muw}$, fixed,
		dec sep align ={c}, precision = 2
	},
	columns/stabvmax/.style={
		column name=$\max K_{\sigmav}$, fixed,
		dec sep align ={c}, precision = 2
	},
	columns/minw/.style={
		column name=$\min{\muw}$, sci
	},
	columns/minv/.style={
		column name=$\min{\sigmav}$, sci
	},
	columns/nnzw/.style={
		column name=$\nnz(\muw)$, int detect
	},
	columns/nnzv/.style={
		column name=$\nnz(\sigmav)$, int detect
	},
	columns/nnzwv/.style={
		column name=$\nnz(\muw)+\nnz(\sigmav)$, int detect
	}
}

\section{Numerical tests}
\label{sec:numerical-tests}

In this section we numerically evaluate the accuracy and 
stability of the quadrature formulas $(Y,w)$ and $(Z,v)$
produced by Algorithms~\ref{algo:mfd} and \ref{algo:spline}
on several 2D and 3D domains (denoted by
$\Omega_1,\dots,\Omega_7$, see Figure~\ref{fig:domains}).
Let $\Omega$ be a bounded Lipschitz domain in $\R^2$ or $\R^3$
with piecewise smooth boundary.
Quadrature errors of $(Y,w)$ and $(Z,v)$ are computed
for two test functions $f_1, f_2 \in C^\infty(\Omegabar)$, and,
respectively, their restrictions to the boundary
$g_1 = f_1\restrict{\partial\Omega}$ and 
$g_2 = f_2\restrict{\partial\Omega}$.
The first test function $f_1$ is a multidimensional
generalization of Runge's function centered at
a domain-dependent point $x_R$ in the interior of $\Omega$:
\[
f_1(x,x_R) = \frac{1}{1+25\norm{x-x_R}_2^2}.
\]
The second test function $f_2$ is a scaled and translated
version of the Franke function \cite{franke1979critical} in 2D,
and its 3D generalization by Renka defined in \cite{renka1988multivariate}.
The Franke and Renka functions are meant to be evaluated over $[0,1]^d$,
but our domains are centered at the origin. For this reason,
we compose these functions with the affine mapping
\[
(x_1,\dots,x_d) \mapsto
\left( \frac{x_1+1}{2}, \dots, \frac{x_d+1}{2} \right),\quad
d=2,3,
\]
between $[0,1]^d$ and $[-1,1]^d$.

The accuracy of the quadrature formulas is assessed
by computing the relative errors
\[
e(f_1) \deq \abs{\delta(f_1,0)} / \abs{I(f_1,0)}\;,
\;\ldots\;,\; %
e(g_2) \deq \abs{\delta(0,g_2)} / \abs{I(0,g_2)}.
\]
Note that the denominators are large enough in all tests, which
makes relative errors meaningful.
Relative errors were preferred to absolute errors to make
results comparable across different test functions and domains.
As will be clear %
from the convergence
plots of Section~\ref{ssec:conv} that go below $10^{-15}$ in some
instances, we need very accurate reference values of the integrals
\[
I(f_1,0), \quad I(f_2,0), \quad I(0,g_1), \quad I(0,g_2),
\]
in order to reliably compute at least one significant digit
of the true relative errors. For every test domain,
except for the deco-tetrahedron $\Omega_7$,
parametrizations of all smooth pieces of $\partial\Omega$
are known in closed form, with the parametric domain given by
either an interval for 2D domains, or a rectangle for 3D domains.
The integrals of $f_1$ and $f_2$ over $\Omega$ were
computed by choosing vector fields $F_1$ and $F_2$
whose divergence is $f_1$ and $f_2$, then using
the divergence theorem to turn the integrals over $\Omega$
into integrals over $\partial\Omega$, and finally integrating
over the parametric domain of each smooth boundary
piece with MATLAB's adaptive quadrature routines
\texttt{integral} and \texttt{integral2}.
To meet the required accuracy target, absolute and
relative tolerances were set to $4 \cdot 10^{-16}$.

Note that even though the test functions $f_1$ and $f_2$
are both infinitely differentiable and of a simple shape,
we can distinguish their smoothness because the partial
derivatives of $f_1$ grow as the factorial of their order,
whereas those of $f_2$, as en entire function, grow much 
slower as an exponent of the order.
This makes $f_1$, that actually stems from the famous Runge
example for polynomial interpolation, a more difficult test
function for high order methods than $f_2$.

The stability of the quadrature formulas is assessed
by computing the \emph{normalized stability constants}
\[
K_{\muw} \deq \frac{1}{\abs{\Omega}}
	\sum_{i=1}^{N_Y} \abs{\muw_i},
\qquad
K_{\sigmav} \deq \frac{1}{\abs{\partial\Omega}}
	\sum_{i=1}^{N_Z} \abs{\sigmav_i},
\]
that measure the sensitivity of the quadrature to the maximum
absolute error in the function values, such that formulas with 
smaller $K_{\muw}$ and $K_{\sigmav}$ are more stable. Again, the
normalization helps to compare the stability across all domains. 
For any quadrature formula exact on
constants we have $K_{\muw} \ge 1$ or $K_{\sigmav} \ge 1$, and the
equality holds if and only if the formula is positive. Without
exactness for constants, but with the relative quadrature error
for the constant function less than some $\eps\in(0,1)$, we get 
$K_{\muw}> 1-\eps$ and $K_{\sigmav} > 1-\eps$. Therefore, the best
stability is attained when $K_{\muw}$ and $K_{\sigmav}$ are close
to one. 
Note that Algorithms~\ref{algo:mfd} and \ref{algo:spline} may produce
quadrature weights such that $K_{\muw} < 1$, because exactness
for constants is only enforced for $(Z,\sigmav)$ by the choice
$(\fhat,\ghat) \equiv (0,1)$. If $|\Omega|$ is known, in addition
to $|\partial\Omega|$, then one may also enforce exactness of
$(Y,\muw)$ for constants by adding one more row to the linear
system \eqref{eq:linear-system-fhat-ghat}, but this does not
provide any tangible benefit, as will be shown
in Section~\ref{sssec:choice-fghat}.

In what follows we first demonstrate in
Section~\ref{ssec:validation-of-parameters} 
the effectiveness of the settings suggested in 
Algorithms~\ref{algo:mfd} and \ref{algo:spline},
which serves as a kind of a tuning step for the free
parameters in the algorithms.
We do the testing mostly for Algorithm~\ref{algo:mfd},
because the results for Algorithm~\ref{algo:spline}
are very similar. The second batch of numerical experiments in
Section~\ref{ssec:conv} provides comprehensive tests of 
the numerical convergence order of the method on domains 
$\Omega_2,\Omega_3,\Omega_5,\Omega_6$. Surprisingly, 
this order is higher than predicted in
Section~\ref{ssec:on-the-choice-of-L-and-B} as it matches or
exceeds  the order $q$ of the numerical differentiation scheme.
Finally, in Section~\ref{ssec:comp} we compare the errors of the
interior integral for the weights $w$ produced by Algorithm~\ref{algo:spline}
on domains $\Omega_6$ and $\Omega_7$ to those by the numerical integration schemes
provided by state-of-the-art open source packaged Gmsh and MFEM.

In all tests the weights of the meshless finite difference
formulas are computed using the open source library mFDlab
\cite{mFDlab}, and those for the tensor-product spline
differentiation by MATLAB's Curve Fitting Toolbox.

\subsection{Validation of recommended parameters}
\label{ssec:validation-of-parameters}

\subsubsection{Choice of quadrature nodes}
\label{sssec:choice-of-nodes}

In the first test we demonstrate the robust performance of the
MFD method described in Algorithm~\ref{algo:mfd} with respect to
the choice of nodes in the sets $X$, $Y$, $Z$.
These experiments are performed on 
an elliptical domain $\Omega_1$ in $\R^2$
with semi-axes of length $1$ and $3/4$ centered at the origin:
\[
\Omega_1 = \left\{
(x_1,x_2) \in \R^2 \mathrel{\Big|}
x_1^2 + \frac{x_2^2}{(3/4)^2} < 1
\right\}.
\]

A domain in $\R^d$ with piecewise smooth boundary may be
discretized by scattered nodes using a wide range of techniques,
see the survey \cite{suchde-point}.
We consider here two important cases: a meshless advancing front
node generation algorithm, which we have released as the
open-source library NodeGenLib \cite{nodegenlib},
and a rejection sampling algorithm.

Advancing front methods for node generation,
as introduced in \cite{lohner1998advancing},
work by generating a set of boundary nodes $Z$ from
a parametric description of $\partial\Omega$,
possibly consisting of multiple patches,
and then placing interior nodes $Y_{int}$ in $\Omega$
by advancing the nodal front inside the domain starting from $Z$.
Node sets $Z$ and $Y_{int}$ are disjoint, and their nodes
are spaced according to locally varying \emph{spacing functions}
$h_Z \colon \partial\Omega \to \R_+$ and
$h_Y \colon \Omega \to \R_+$,
although we restrict ourselves to constant spacing
\[
h_Z \equiv h_Y \equiv h \in \R_+
\]
for all numerical experiments, which implies that the nodes
are quasi-uniform. Outward-pointing unit normals
$\nu \restrict{Z}$ are also computed and stored,
because they are required to assemble matrix $B$
in Algorithms~\ref{algo:mfd} and \ref{algo:spline}.
We do not provide here a description of our own
implementation of the advancing front method,
because it is still in early development, and is mostly motivated
by future research topics, such as automatic refinement
of quadrature nodes, or integration over CAD geometries.
For reference, however, we remark that the set $Z$ is generated
by NodeGenLib release 0.3 (the version used in our numerical
experiments) in essentially
the same way as in \cite[Algorithm~2.1]{duh2021fast},
and the set $Y_{int}$ as in
\cite[Algorithm~4.1]{slak2019generation}.

Rejection sampling is based on either a uniform Cartesian grid,
or the quasi-random Halton sequence,
or pseudo-random uniformly distributed samples, and
works in the same way regardless
of the node distribution: samples are first generated
in a bounding box~$\bbox$ containing $\Omega$, and then
are either accepted, projected to the boundary,
or discarded. More precisely, a set $Y_{\bbox}$ of
$N_{\bbox} = \round \bigl( \abs{\bbox} h^{-d} \bigr)$ nodes is 
initially generated in the bounding box $\bbox$, 
and then only nodes inside $\Omega$ are kept by using a
level set function $\varphi$ such that
\[
\Omega = \{ x \in \bbox \colon \varphi(x) < 0 \},
\qquad
Y_{int} = \{ y \in Y_{\bbox} \colon \varphi(y) < 0 \}.
\]
Nodes in a first-order approximation of a tubular
neighborhood of $\partial\Omega$ of width $2h$ are
projected to the zero set of $\varphi$
along the direction parallel to the gradient of $\varphi$,
and then a thinning operation is applied to the projected
points to ensure that no pair of nodes in $Z$
has distance smaller than $h$.

By the uniform density of the initial nodes used in the
rejection sampling process, the packing spacing $h_{ps}(Y)$
of the generated internal nodes, see \eqref{packsp}, ends up being very
close to the spacing parameter $h$. The same, however, is not true
for the advancing front method: for a fixed value of~$h$,
the two approaches produce a significantly different
number of internal nodes (about 15\% more for rejection
sampling in 2D, compared to advancing front), and this
is due to different definitions of $h$ in two
node placement techniques.
For the sake of notational simplicity, we have decided
to use the same variable $h$ in all cases. However,
whenever necessary, we tweak $h$ in our numerical
experiments to ensure that the sets $Y$ and $Z$ have
approximately the same size across all node generation methods.

Regardless of how the sets $Y_{int}$ and $Z$ are generated,
we produce \emph{open} quadrature formulas, for which
$Y \cap \partial\Omega = \emptyset$, as well as \emph{closed}
quadrature formulas, for which
$Y \cap \partial\Omega \neq \emptyset$.
In all of our numerical experiments, the former are obtained by
taking $Y = Y_{int}$, whereas the latter are obtained by
taking $Y = Y_{int} \cup Z$.
Note that the accuracy and stability of $\sigmav$
are also affected by the choice of $Y$, because
the weight vectors $\muw$ and $\sigmav$ are computed
simultaneously.

Unless stated otherwise, the set $X$ of discretization nodes
used to define the numerical differentiation scheme is generated
in the same way as the closed version of~$Y$,
but with a larger spacing $h_X = 1.6 \, h$.
Alternatively, the set $X$ is chosen as an unfitted
uniform Cartesian grid over a bounding box $\bbox$ around $\Omega$,
large enough to ensure that no set of influence $S_{L,i}$ or $S_{B,i}$
reaches the boundary of $\bbox$. The step size of the grid
is slightly larger than $1.6 \, h$, so that the number of
rows in the matrix $A$ stays approximately the same.

Node generation may be a stochastic process,
which applies in particular to the advancing front method
and rejection-sampling of random nodes,
or a deterministic process, whose starting conditions
are arbitrary and may therefore be randomized,
like rejection-sampling of Halton nodes.
Even in the case of Cartesian grids, some randomness
can be included in the node generation process
by a random shift of the entire grid in the range $[0,h)$
along each Cartesian dimension. 
In either case, node sets $X$, $Y$, $Z$ can be
understood as functions of a \emph{seed}, an integer
variable $i_s$ that initializes the pseudo-random number
generators.
A naive comparison of quadrature errors based on a
single choice of seed can lead to misleading results,
because all errors are subject to random noise, and 
so any particular node distribution may overperform
or underperform compared to the others, sometimes
by an order of magnitude. Numerical quadrature is much more
affected by this than, for example, the data fitting problem, where
the maximum or the root mean square error over an entire domain
is computed, which significantly reduces the influence of the
noise in the pointwise errors.

Therefore we use average errors in all parameter validation tests.
They are computed as follows.
Let $e(\cdot,i_s)$ be the relative quadrature
error for a given seed $i_s \in \N$.
We denote by $e_{rms}(\cdot)$ %
the root mean square values (RMS) of
$e(\cdot,i_s)$ for $i_s = 1, \dots,n_s$:
\begin{equation} \label{eq:err-rms}
e_{rms}(\cdot)
= \bigg(\frac{1}{n_s} \sum_{i_s = 1}^{n_s} e(\cdot,i_s)^2
\bigg)^{1/2},
\end{equation}
where we take $n_s=64$.
The stability constants $K_{\muw}$
and $K_{\sigmav}$ are obtained by averaging the values
\[
K_{\muw}(i_s)
= \frac{1}{\abs{\Omega}}
	\sum_{j=1}^{N_Y(i_s)} \abs{\muw_j(i_s)}
\quad \text{and} \quad
K_{\sigmav}(i_s)
= \frac{1}{\abs{\partial\Omega}}
	\sum_{j=1}^{N_Z(i_s)} \abs{\sigmav_j(i_s)}
\]
over all seeds $i_s = 1, \dots, n_s$.

Table~\ref{tab:choice-XYZ} reports the RMS
quadrature errors and average stability
constants for open and closed quadrature formulas
computed by Algorithm \ref{algo:mfd}
on $\Omega_1$ and $\partial\Omega_1$
using the four node generation methods described above.
Numerical differentiation is performed by meshless finite
difference formulas of polynomial order $q=5$, and
the Runge function $f_1$ is centered at $x_R = (0,0)$.
In all rows of Table~\ref{tab:choice-XYZ}, care was taken
to tweak the value of $h$ so that $N_Y \approx 2500$.
The values of $N_Y$ and $N_Z$ are also averages over 64 seeds,
but have been rounded to the nearest integer
to enhance readability.

We observe that nodes generated by the advancing
front algorithm lead to the most accurate and stable
quadrature formulas, although Cartesian nodes,
Halton nodes, and even random nodes remain competitive.
This suggests that our quadrature formulas are robust with
respect to the exact placement of the input
nodes in $Y$ and $Z$. On average, closed quadrature
formulas are more accurate and stable than
their open counterparts, and so they will be used in all
subsequent experiments. Note that the errors for the worst of 64
seeds are less than three times higher than the RMS errors in the
table in all cases except for the open quadrature on random nodes,
where they may get to almost 8 times higher. The worst stability
constant $K_w$ is 1.71 for the closed quadrature on advancing
front nodes and is significantly higher than the average only for
random nodes, with 8.29 for the closed and 233.21 for the open
formulas. The largest constant $K_v$ for the open formulas on
random nodes is 12.72 and never exceeds 1.07 in other cases.

Table~\ref{tab:choice-XYZ-unfitted} reports the errors
for the same numerical experiments as in Table~\ref{tab:choice-XYZ},
except that the set $X$ is chosen as an unfitted
uniform Cartesian grid with step size $2h$
when $Y$ and $Z$ are generated by the advancing front method,
and step size $1.8 \, h$ when rejection sampling is used.
Smaller step sizes, such as $1.6 \, h$, may lead
to overconstrained linear systems, because the
sets of influence $S_{L,i}$ and $S_{B,i}$ extend
outside of $\Omegabar$, and this increases significantly
the number of constraints; we have found $2h$ and $1.8 \, h$
to roughly equalize the size of $A$ across the two tables.
Errors are up to two orders of
magnitude larger, and the resulting quadrature formulas are
much less stable. This indicates a clear advantage of the
fitted approach, where $X$ is chosen in the same way as the
closed version of $Y$.

Summarizing the results of this section, in
those applications where the nodes are not provided by
the user  we recommend to generate the node sets by the advancing
front algorithm, and include the boundary nodes in both sets $X$
and $Y$.

\begin{table}[htbp!]
\caption{Comparison of RMS quadrature errors and average
stability constants for different node distributions.
MFD algorithm on ellipse $\Omega_1$ with polynomial
order $q=5$ and $n_s=64$ distinct seeds. The set $X$
is chosen in the same way as the closed version of $Y$,
but with a larger spacing $h_X = 1.6 \, h$.}
\label{tab:choice-XYZ}
\centering
\begin{small}
\pgfplotstabletypeset[
	create on use/method/.style={
        create col/set list={
        	Adv.~front closed, Adv.~front open,
        	Cartesian grid closed, Cartesian grid open,
        	Halton closed, Halton open,
        	Random closed, Random open
        }
    },
	columns/method/.style={
		column name = {Node distribution},
		column type = {l},
		string type
	},
	columns/stabv/.append style={
		precision = 3
	},
	columns={
		method,hY,NY,NZ,sreintrungerms,srebndrungerms,
		sreintfrankerms,srebndfrankerms,stabw,stabv
	}]
{test-choice-XYZ-fitted.csv}
\end{small}
\end{table}

\begin{table}[htbp!]
\caption{Comparison of RMS quadrature errors and average
stability constants for different node distributions.
MFD algorithm on ellipse $\Omega_1$ with polynomial
order $q=5$ and $n_s=64$ distinct seeds. The set $X$
is chosen as an unfitted uniform Cartesian grid
with step size about 0.057 over the bounding box
$\bbox = [-2,2] \times [-2,2]$ around $\Omega_1$.}
\label{tab:choice-XYZ-unfitted}
\centering
\begin{small}
\pgfplotstabletypeset[
	create on use/method/.style={
        create col/set list={
        	Adv.~front closed, Adv.~front open,
        	Cartesian grid closed, Cartesian grid open,
        	Halton closed, Halton open,
        	Random closed, Random open
        }
    },
	columns/method/.style={
		column name = {Node distribution},
		column type = {l},
		string type
	},
	columns/stabv/.append style={
		precision = 3
	},
	columns={
		method,hY,NY,NZ,sreintrungerms,srebndrungerms,
		sreintfrankerms,srebndfrankerms,stabw,stabv
	}]
{test-choice-XYZ-unfitted.csv}
\end{small}
\end{table}

\subsubsection{Choice of spacing parameters \texorpdfstring{$h_X$}{hX} and \texorpdfstring{$h_{\Scal}$}{hS}}
\label{sssec:spacing}

In order to keep the linear system \eqref{eq:ls} underdetermined
and solvable, the spacing parameters $h_X$ and $h_{\Scal}$ must
be sufficiently large, because they determine the number of rows of
the system matrix. Nevertheless, as seen by the 
considerations in Section \ref{ssec:on-the-choice-of-L-and-B},
they should remain comparable to $h$ 
to ensure that the numerical differentiation scheme
is asymptotically accurate, in the sense that
\[
\varepsilon_L(u) = \Ocal(h^{q-k})
\quad \text{and} \quad
\varepsilon_B(u) = \Ocal(h^{q-k}).
\]
Therefore, we suggest to choose $h_X$ and $h_{\Scal}$ to be
directly proportional to $h$, and investigate how the errors
of the quadrature formulas depend on the
ratios $h_X/h$ and $h_{\Scal}/h$.

In Algorithms~\ref{algo:mfd} (MFD) and \ref{algo:spline} (BSP)
we recommend to choose the spacing parameters as
$h_X = 1.6 \, h$ and $h_{\Scal} = 4 \, h$
based on numerical evidence that we have gathered in
our numerical experiments on several 2D and 3D domains.
The case of the ellipse $\Omega_1$ is presented in
Figure~\ref{fig:test-choice-hX-hS-2D}, where quadrature errors
are plotted as a function of the ratios $h_X/h$ and $h_{\Scal}/h$
for $h = 0.025$ and $q = 5$.
Figure~\ref{fig:test-choice-hX-hS-3D} shows a similar test
on the ellipsoid $\Omega_4$ in $\R^3$ with semi-axes of
length 1, 0.7, 0.7,
\[
\Omega_4 = \left\{
(x_1,x_2,x_3) \in \R^3 \mathrel{\Big|}
x_1^2 + \frac{x_2^2}{0.7^2} + \frac{x_3^2}{0.7^2} < 1
\right\},
\]
and with $h = 0.05$ and $q = 5$.

In both cases, we use node sets $X,Y,Z$ generated
by the advancing front algorithm, with $Z \subset Y$.
As in the previous section, data points are the RMS of 64 quadrature
errors given by different seeds. The Runge function $f_1$ is
centered at the origin.
To aid visualization, instead of plotting the absolute values
of the errors, we plot their relative sizes compared
to the choices $h_X = 1.6 \, h$ and $h_{\Scal} = 4 \, h$.
The vertical axes are in logarithmic scale.
In addition, we plot the stability constant $K_w$.
In the BSP case, the size of the bounding box $\bbox$
has been chosen depending on $h_{\Scal}$ to avoid
boundary effects, and to ensure that the length of each side
of $\bbox$ is a multiple of $h_{\Scal}$.

\begin{figure}[htbp!]
\centering
\subcaptionbox{MFD in 2D: Errors and stability constant $K_w$
as functions of the ratio $h_X/h$; errors are relative to 
their values for $h_X/h = 1.6$.}{
	\includegraphics[width=0.47\textwidth]{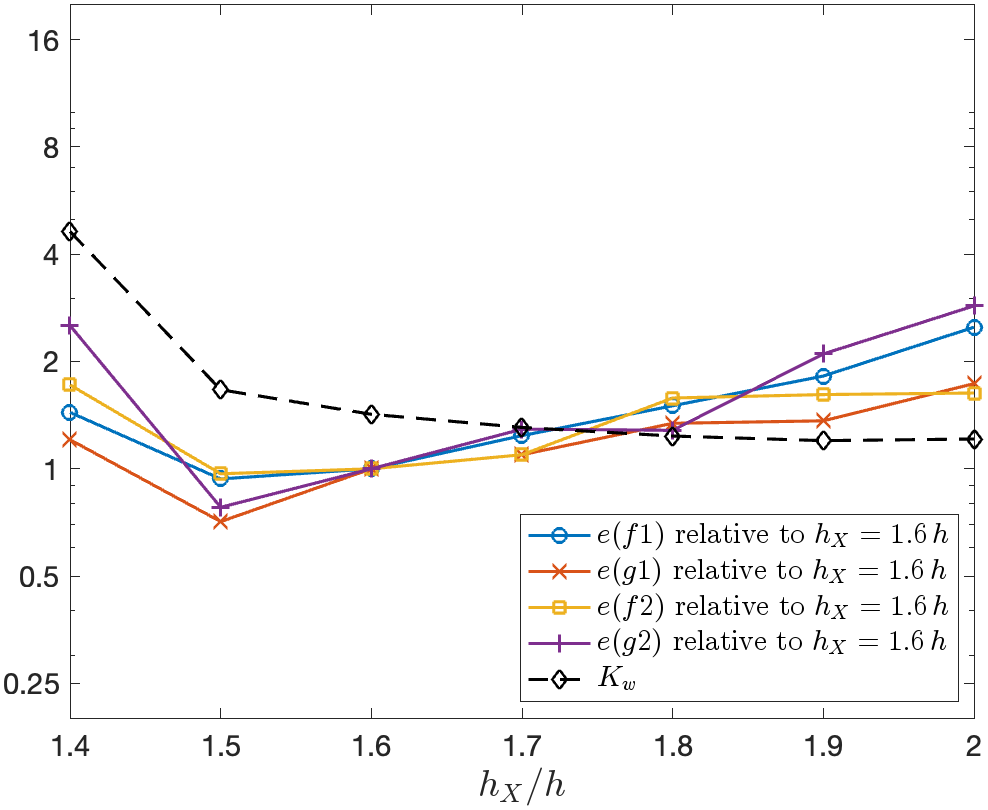}
} \hfill
\subcaptionbox{BSP in 2D: Errors and stability constant $K_w$
as functions of the ratio $h_{\Scal}/h$; errors are relative to 
their values for $h_{\Scal}/h = 4$.}{
	\includegraphics[width=0.47\textwidth]{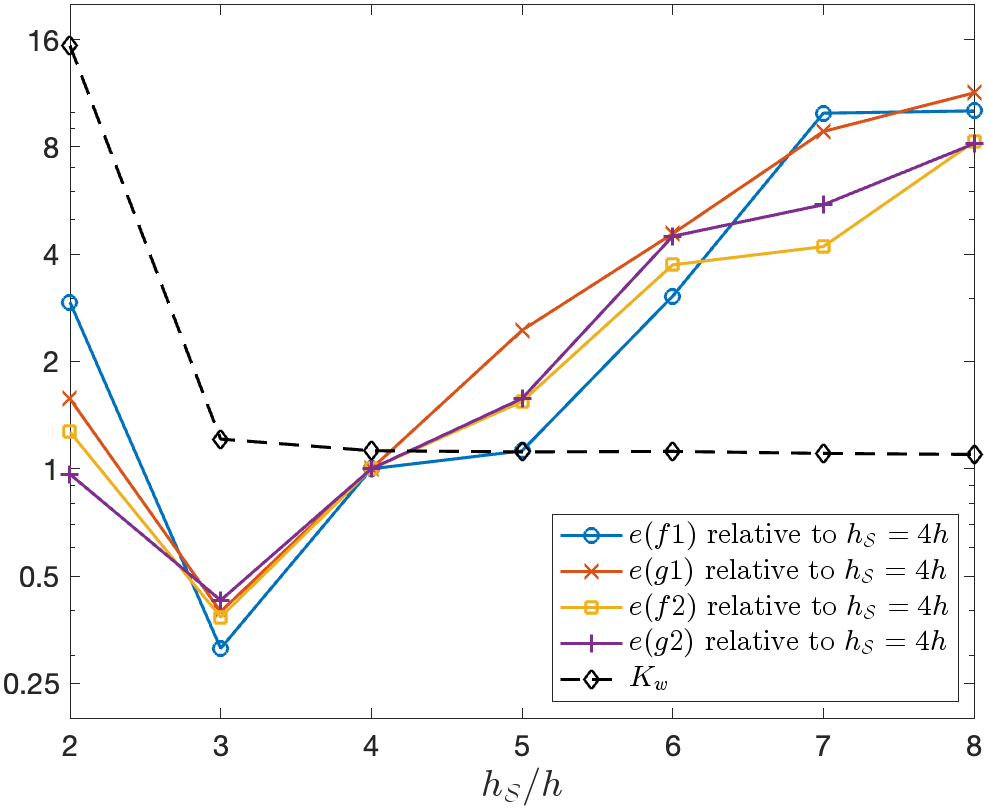}
}
\caption{RMS quadrature errors on ellipse $\Omega_1$ with
$h = 0.025$, $q=5$ and 64 different seeds. 
The dashed lines show the average stability constant.
On the left:~MFD algorithm.
On the right:~BSP algorithm.
}
\label{fig:test-choice-hX-hS-2D}
\end{figure}

\begin{figure}[htbp!]
\centering
\subcaptionbox{MFD in 3D: Errors and stability constant $K_w$
as functions of the ratio $h_X/h$; errors are relative to 
their values for $h_X/h = 1.6$.}{
	\includegraphics[width=0.47\textwidth]{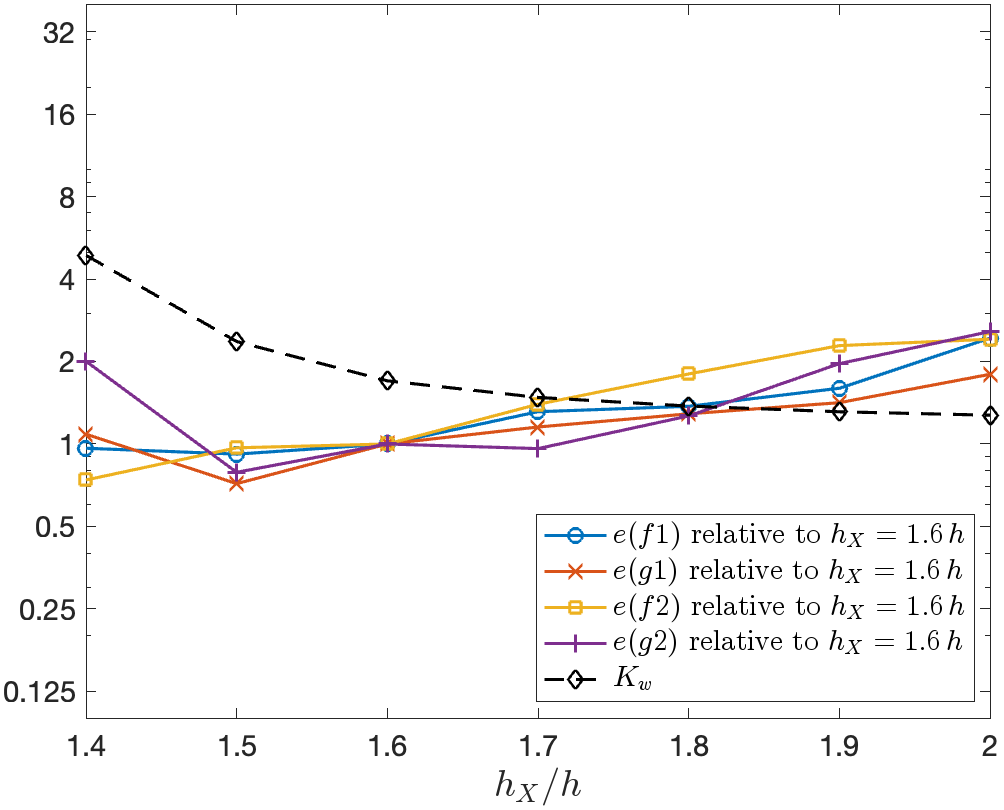}
} \hfill
\subcaptionbox{BSP in 3D: Errors and stability constant $K_w$
as functions of the ratio $h_{\Scal}/h$; errors are relative to 
their values for $h_{\Scal}/h = 4$.}{
	\includegraphics[width=0.47\textwidth]{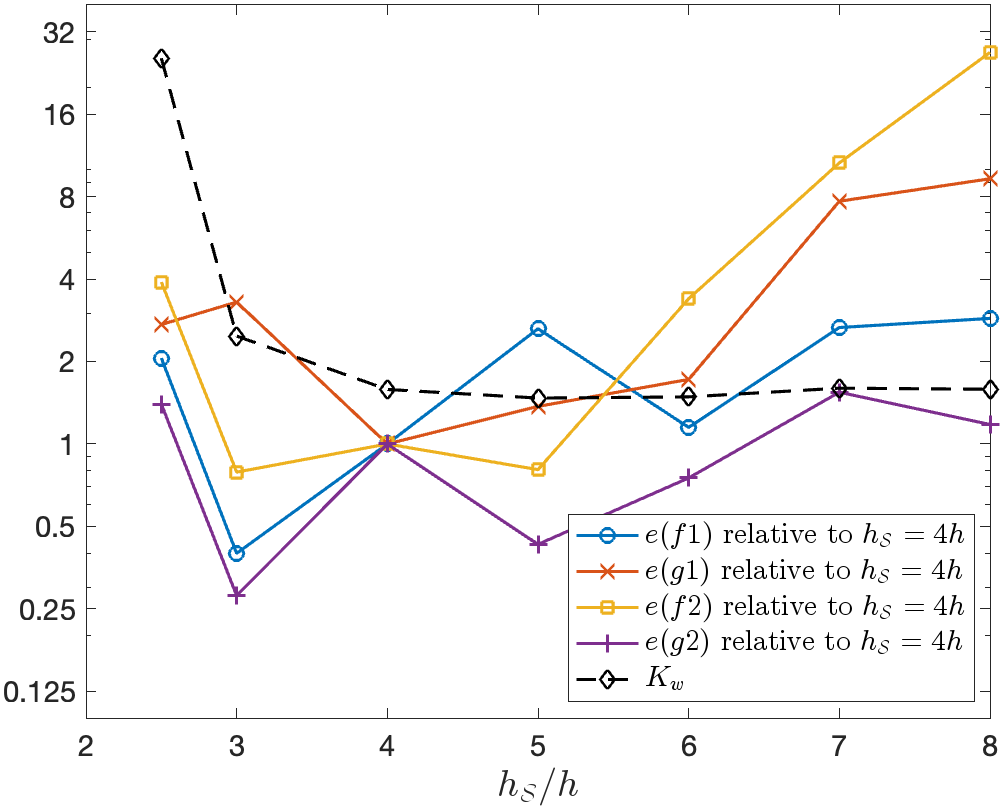}
}
\caption{RMS quadrature errors on ellipsoid $\Omega_4$ with
$h = 0.05$, $q=5$ and 64 different seeds. 
The dashed lines show the average stability constant $K_w$.
On the left:~MFD algorithm.
On the right:~BSP algorithm.
}
\label{fig:test-choice-hX-hS-3D}
\end{figure}

The two figures clearly show that quadrature errors
decrease as the ratios $h_X/h$ and $h_{\Scal}/h$ get smaller,
but only up to a point: when the ratios get too small,
the errors and the stability constant $K_w$
quickly rise up again.
Reducing the ratios even further leads to overdetermined systems
for which no exact solution exists, and so their errors are not
included in the plots. The stability constant $K_v$
is not included in the figures because its
dependence on $h_X/h$ and $h_S/h$ is very weak.
The ratios $h_X = 1.6 \, h$ and $h_{\Scal} = 4 \, h$
were found to be a good compromise between accuracy and stability,
as they are close to the minimum of the curves in
the two figures, but leaning on the side of stability,
so that a safety margin is left
in Algorithms~\ref{algo:mfd} and \ref{algo:spline}.
This choice of the ratios was also validated by running the convergence
tests of Section~\ref{ssec:conv} with slightly different
values of $h_X/h$ and $h_{\Scal}/h$, and comparing the results.

Even though increasing the ratios %
reduces the number of rows of the system matrix $A$, it does not
change the total number of nonzeros in $A$. Indeed, the number of
nonzeros in every column of the horizontal concatenation
$\big(L^T\,-\!B^T\big)$ depends in the MFD case only on
the size $n_L$ or $n_B$ of the
set of influence of the corresponding node in $Y$ or $Z$, as chosen
in Step~4 of Algorithm~\ref{algo:mfd}. Likewise, in the BSP case
the number of nonzeros in every column in general equals the
number $q^d$ of B-splines of degree $q-1$ that include the
corresponding quadrature node in the interior of their support.
Taking into account the last row of $A$ corresponding to the
non-homogeneous constraint, we get the following upper bound for
the number of nonzeros:
\begin{equation}\label{nnzA}
\nnz(A)\le N_Z+
\begin{cases}
n_LN_Y+n_BN_Z, & \text{for Algorithm~\ref{algo:mfd},}\\
q^d(N_Y+N_Z), & \text{for Algorithm~\ref{algo:spline}.}
\end{cases}
\end{equation}
Note that we do not test how the performance of the MFD
quadrature depends on the parameters $n_L$ and $n_B$, leaving
this to future work, where also more sophisticated methods for
the selection of the sets of influence could be investigated. As
mentioned in Section~\ref{sssec:mfd}, we rely on the safe and
simple but possibly not optimal selection of $K$ nearest
neighbors in $X$ to a node in $Y$ or $Z$, with $K$ being twice
the dimension of the corresponding polynomial space. As a result,
for a fixed polynomial order $q$, $\nnz(A)$ for BSP is about
twice as large compared to MFD in 3D, but somewhat smaller than
it in 2D, compare numerical results in Table~\ref{tab:nodes}.

\subsubsection{Choice of operators \texorpdfstring{$\Lcal$}{Lcal} 
and \texorpdfstring{$\Bcal$}{Bcal}}
\label{sssec:choice-LBcal}

In Section~\ref{ssec:choice-of-operators-Lcal-and-Bcal} we
discussed two possible ways to choose operators
$\Lcal$ and $\Bcal$ for our scheme: the \emph{elliptic approach}
with $\Lcal = \Delta$ and $\Bcal = \partial_\nu$, and the
\emph{divergence approach}, with $\Lcal = \diver$
and $\Bcal = \gamma_\nu$. Theoretical considerations indicate
that the elliptic approach should be appropriate for
domains with smooth boundary.
However, the theory of the elliptic approach breaks down when
the boundary is not smooth. At the same time,
the divergence approach looks promising for piecewise smooth
domains, even though the theory to justify it is far from complete 
because of the lack of results on the regularity of the
corresponding boundary  value problem \eqref{eq:bvp-divergence}.

In this section we compare the numerical performance of both
approaches on two 2D domains, the ellipse $\Omega_1$ with a
smooth boundary, and the disk sector $\Omega_2$ defined in the
polar coordinates $(r,\theta)$ centered
at the origin by
\[
\Omega_2 = \left\{
(r,\theta) \in \R^2 \mathrel{\Big|}
0 < r < 1 \text{ and } 0 < \theta < 3\pi/2
\right\}.
\]
The domain $\Omega_2$ has non-smooth boundary, with a reentrant
corner at $(0,0)$, and serves as a standard benchmark for
testing numerical methods for the elliptic boundary value problem
\eqref{eq:bvp-laplacian} that take into account the reduced smoothness
of the solution at reentrant corners.

We use numerical differentiation schemes $(\Psi,L,B)$
based on meshless finite difference formulas for both approaches,
the same sets of quadrature nodes $Y$ and $Z$ generated
by the advancing front algorithm with $Z \subset Y$,
and the same non-homogeneous constraints
with $(\fhat,\ghat) \equiv (0,1)$.
In the divergence approach we generate $X$ with
the usual advancing front method and spacing parameter
$h_X = 1.6 \, h$, whereas in the elliptic approach
we pick $X = Y$. We have found this simple choice to
be more accurate and reliable compared to the generation
of a coarser set of nodes $X$. (Figure~\ref{fig:test-choice-hX-hS-2D}
is specific to the divergence approach.)
The Runge function $f_1$ is centered at $x_R = (0,0)$
in the case of the ellipse $\Omega_1$, and at
$x_R = (\cos(3\pi/4)/2,\sin(3\pi/4)/2)$
in the case of the disk sector $\Omega_2$.
We have chosen the polynomial order as $q = 4+k$, where
$k$ is the order of the differential operator $\Lcal$, so that
the errors are expected to decay as $h^4$ as $h\to0$, unless the
convergence order is saturated by the low regularity of the
solution $u$ of the boundary value problem
\eqref{eq:bvp-joint-error-estimate}.

Figure~\ref{fig:choice-LBcal} presents relative quadrature errors
as a function of spacing parameter $h$ for both the elliptic  and
the divergence approach for the ellipse $\Omega_1$ (top row) and
for the disk sector $\Omega_2$ (bottom row). Relative errors are
the RMS of 8 outcomes given by distinct seeds.

\begin{figure}[h!]
\centering
\subcaptionbox{Errors on ellipse $\Omega_1$
for $\Lcal = \Delta$ and $\Bcal = \partial_\nu$.}{
	\includegraphics[width=0.45\textwidth]{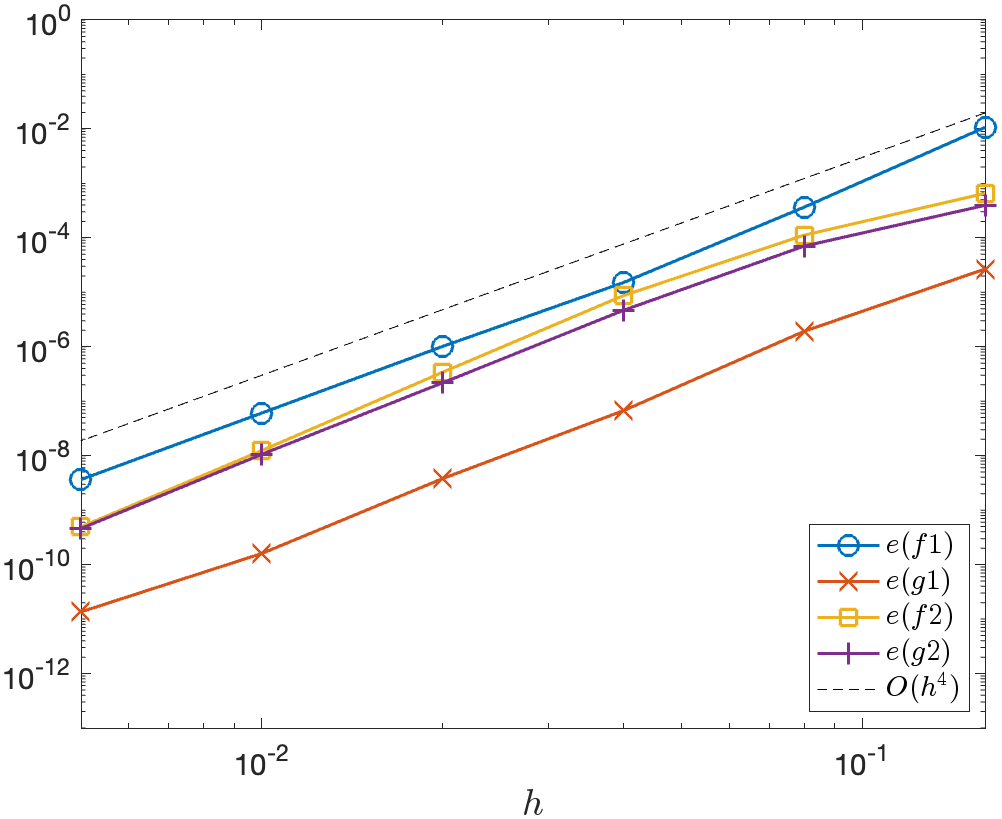}
} \hfill
\subcaptionbox{Errors on ellipse $\Omega_1$
for $\Lcal = \diver$ and $\Bcal = \gamma_\nu$.}{
	\includegraphics[width=0.45\textwidth]{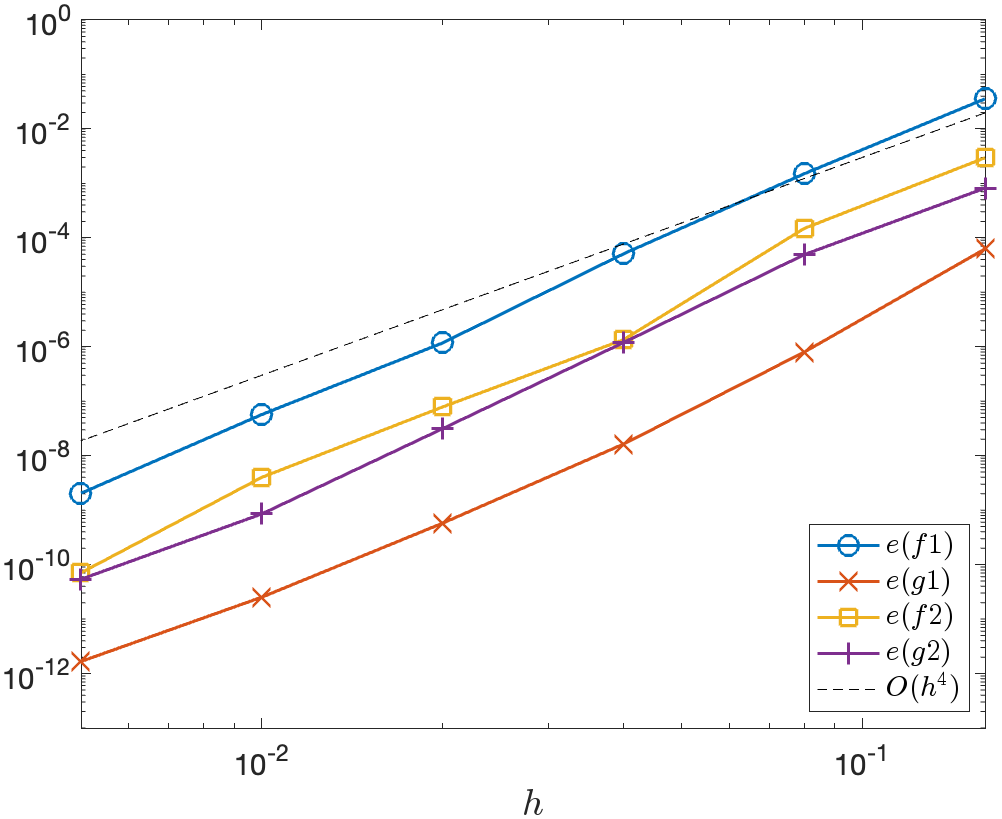}
} \\[2em]
\subcaptionbox{Errors on disk sector $\Omega_2$
for $\Lcal = \Delta$ and $\Bcal = \partial_\nu$.}{
	\includegraphics[width=0.45\textwidth]{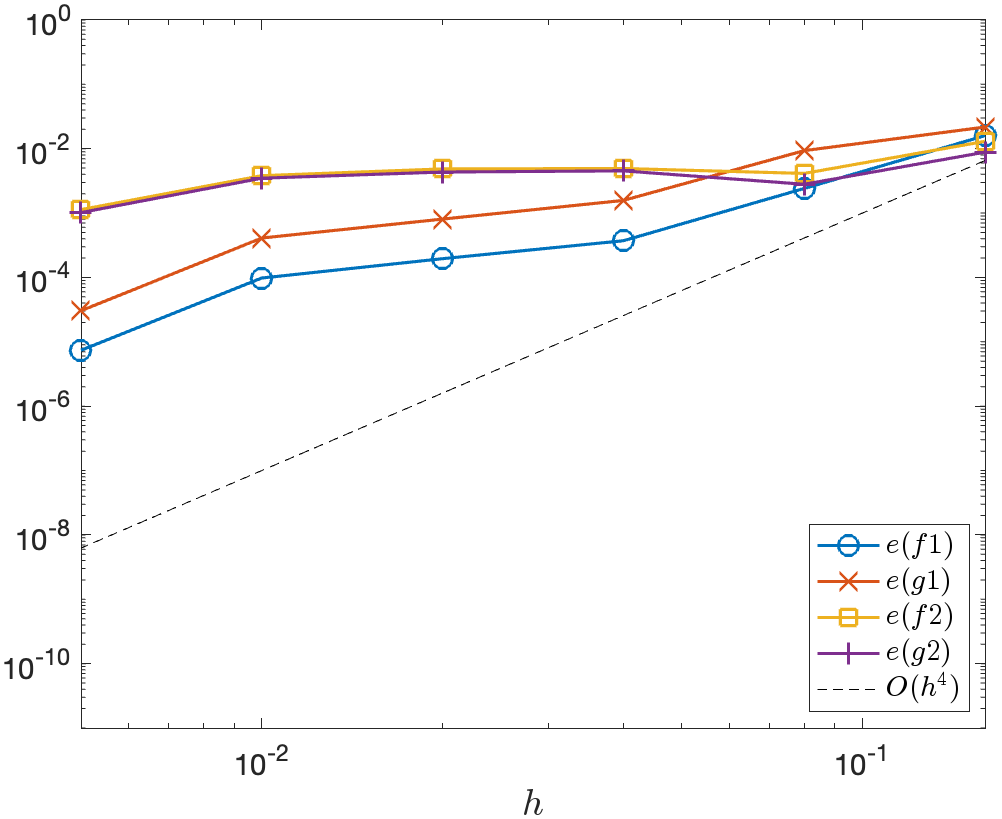}
} \hfill
\subcaptionbox{Errors on disk sector $\Omega_2$
for $\Lcal = \diver$ and $\Bcal = \gamma_\nu$.}{
	\includegraphics[width=0.45\textwidth]{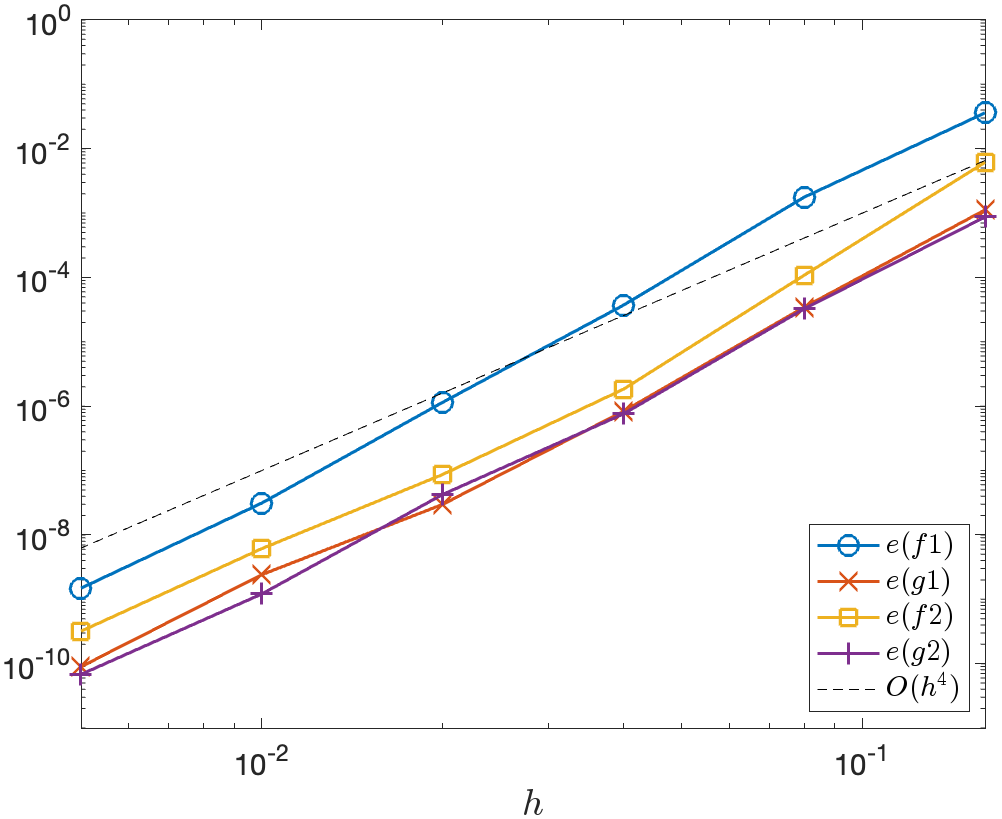}
}
\caption{Comparison of RMS quadrature errors for the elliptic (left)
and divergence (right) approaches on the ellipse $\Omega_1$ (top)
and disk sector $\Omega_2$ (bottom), averaged over 8 seeds.
Numerical differentiation based on MFD 
with polynomial order $q=4+k$, with $k=2$, the order of
$\Delta$, (left) and $k=1$, the order of $\diver$, (right).}
\label{fig:choice-LBcal}
\end{figure}

Comparing the plots in the top row of
Figure~\ref{fig:choice-LBcal} for the smooth domain $\Omega_1$,
we observe that although most errors are smaller for the
divergence approach, the decay is $h^4$ in both cases, as
predicted by the theory presented in Sections~\ref{sssec:neumann}
and \ref{sssec:mfd}. In contrast to this, the plots in the bottom
row for the non-smooth disk sector $\Omega_2$ show that in the case 
of the elliptic approach the quadrature errors are barely decreasing
as $h \to 0$, whereas the divergence approach still delivers
the expected $h^4$ convergence order.
This is the very compelling reason why the divergence
approach is recommended in Algorithms~\ref{algo:mfd}
and \ref{algo:spline}, despite the additional complexity
introduced by the numerical differentiation of a  
vector field instead of a scalar function.
Moreover, the divergence approach can achieve the same
convergence order $q-k$ with a smaller polynomial
order $q$, which means that smaller sets of influence can be used.

Note that the stability constants $K_\muw$ and $K_\sigmav$ of the
quadrature formulas generated by the elliptic approach
on the disk sector never exceed 1.1
in the tests of Figure~\ref{fig:choice-LBcal}(c).
This provides further evidence that the
large quadrature errors are not caused by the instability
of the computed formulas, but rather by the ineffectiveness
of the numerical differentiation scheme on solutions $u$
to the boundary value problem \eqref{eq:bvp-laplacian}
on the disk sector due to their low regularity near the reentrant
corner, which leads to large recovery errors
$\varepsilon_L(u)$ and $\varepsilon_B(u)$ and influences all
estimates of Section~\ref{sec:qf} starting
from \eqref{eq:joint-error-estimate}.

In addition to the experiments in this section, those
in Section~\ref{ssec:conv} for $4\le q\le 8$ on the
disk sector $\Omega_2$ and an L-shaped 3D domain (see
Figures~\ref{fig:disk-sector} and \ref{fig:ell-bricks}), also
confirm that  the divergence approach does not
suffer from saturation of the convergence order of the
quadrature formulas.
This can be interpreted as numerical evidence in support
of our conjecture that the boundary value problem
\eqref{eq:bvp-divergence} for the divergence operator
has solutions of high regularity order on Lipschitz
domains with piecewise smooth boundary, whenever the right hand
side is highly regular, see
Section~\ref{sssec:div} for a more detailed discussion.

\subsubsection{Choice of non-homogeneous constraints}
\label{sssec:choice-fghat}

At least one non-homogeneous constraint
\eqref{eq:non-homogeneous-constraint} has to be included
in the linear system \eqref{eq:ls}, or else the quadrature will
not be accurate for $f,g$ with nonzero combined integral
$I(f,g)\ne0$, and the minimum-norm solution will be trivial.
In Table~\ref{tab:choice-fghat-ellipse} we have compared
quadrature errors and stability constants for
different combinations of the non-homogeneous
constraints introduced
in Section~\ref{ssec:on-the-choice-of-fhat-and-ghat}:
\begin{equation} \label{eq:choice-fghat}
(\fhat,\ghat) \equiv (1,0),
\qquad
(\fhat,\ghat) \equiv (0,1),
\qquad
(\fhat,\ghat) \equiv (1,-1),
\qquad
(\fhat,\ghat) \equiv (0,\partial_\nu \Phi(\cdot,x_0))
\text{ with $x_0 \in \Omega$.}
\end{equation}
The integration domains are the ellipse $\Omega_1$
and the disk sector $\Omega_2$, as defined previously.
The spacing parameter is $h = 0.025$ and the polynomial
order is $q = 5$. Nodes in $X,Y,Z$ are generated
by the advancing front algorithm with $Z \subset Y$,
and relative errors are the RMS of 64 quadrature errors
given by different seeds. The Runge function $f_1$ is
centered at $x_R = (0,0)$ in the case of the
ellipse $\Omega_1$, and at $x_R = (\cos(3\pi/4)/2,\sin(3\pi/4)/2)$
in the case of the disk sector $\Omega_2$.
The fundamental solution is centered at $x_0 = (0.1, 0.05)$,
a point that does not belong to
any axis of symmetry of the two domains.

\begin{table}[htbp!]
\caption{Comparison of RMS quadrature errors and average
stability constants for different non-homogeneous constraints.
MFD algorithm on ellipse $\Omega_1$ and disk sector $\Omega_2$
with polynomial order $q=5$ and 64 distinct seeds.}
\label{tab:choice-fghat-ellipse}
\centering
\begin{small}
\pgfplotstabletypeset[
	create on use/constraints/.style={
        create col/set list={
        	{$(1,0)$}, {$(0,1)$}, {$(1,-1)$},
        	{$(0,\partial_\nu\Phi)$},
        	{$(1,0)$ and $(0,1)$},
        	{$(1,0)$ and $(0,\partial_\nu\Phi)$},
        	{$(0,1)$ and $(0,\partial_\nu\Phi)$},
        	{$(1,0)$}, {$(0,1)$}, {$(1,-1)$},
        	{$(0,\partial_\nu\Phi)$},
        	{$(1,0)$ and $(0,1)$},
        	{$(1,0)$ and $(0,\partial_\nu\Phi)$},
        	{$(0,1)$ and $(0,\partial_\nu\Phi)$}
        }
    },
	columns/constraints/.style={
		column name = {Constraints $(\fhat,\ghat)$},
		column type = {l},
		string type
	},
	create on use/domain/.style={
        create col/set list={
        	$\Omega_1$, $\Omega_1$, $\Omega_1$,
        	$\Omega_1$, $\Omega_1$, $\Omega_1$, $\Omega_1$,
        	$\Omega_2$, $\Omega_2$, $\Omega_2$,
        	$\Omega_2$, $\Omega_2$, $\Omega_2$, $\Omega_2$
        }
    },
	columns/domain/.style={
		column name = {Domain},
		column type = {c},
		string type
	},
	columns/hY/.append style={
		precision = 1
	},
	columns/stabw/.append style={
		precision = 4
	},
	columns/stabv/.append style={
		precision = 4
	},
	every row no 6/.append style={
		after row=\midrule
	},
	columns={
		domain,constraints,hY,sreintrungerms,srebndrungerms,
		sreintfrankerms,srebndfrankerms,stabw,stabv
	}]
{test-choice-fghat.csv}
\end{small}
\end{table}

The results in the table show that quadrature errors
depend quite weakly on the choice of non-homogeneous
constraints. In particular, on the ellipse, whose boundary
is smooth, there is essentially no difference in the errors,
whereas on the disk sector, with a non-smooth boundary,
a difference of one order of magnitude or more can be
observed between enforcing exactness for
constants and enforcing exactness for
the normal derivative of the fundamental solution
centered at $x_0 \in \Omega$.
This is the reason to recommend the choice $(\fhat,\ghat) \equiv (0,1)$
in Algorithms~\ref{algo:mfd} and \ref{algo:spline}
whenever the measure of $\partial\Omega$ is known or can be
sufficiently accurately approximated at low cost, as in the case of domains
whose boundary is described by parametric patches with standard
parameter domains for which accurate quadrature formulas are
available.
Whenever this is challenging, the purely moment-free
approach based on the fundamental solution remains effective,
although a smaller spacing parameter $h$ may be needed to achieve
the same errors on domains with non-smooth boundary.
As far as stability is concerned, all the non-homogeneous
constraints perform equally well. Boundary quadrature formulas
are close to being positive, but no constraint can consistently
deliver non-negative weights across all 64 seeds.

\subsubsection{Choice of minimization norm}
\label{sssec:choice-norm}

In Section~\ref{ssec:on-the-choice-of-minimization-norm},
we have discussed relative merits of the norms
\[
\norm{(\muw,\sigmav)}_1 = \norm{\muw}_1 + \norm{\sigmav}_1,
\qquad
\norm{(\muw,\sigmav)}_2 = \sqrt{\norm{\muw}_2^2 + \norm{\sigmav}_2^2}
\]
as candidates for the optimization norm
$\norm{(\muw,\sigmav)}_\sharp$ to be used in the final step
of the general framework described in
Algorithm~\ref{algo:abstract-algorithm}, where the 
quadrature weights $\muw^*$ and $\sigmav^*$ are computed
by minimizing $\norm{(\muw,\sigmav)}_\sharp$
over all solutions to the non-homogeneous
linear system $Ax=b$ of \eqref{eq:ls}.
We now present numerical experiments to compare their
performance and support our choice of the 2-norm for
the practical algorithms of Section~\ref{ssec:palgos}.

The 1-norm minimization problem can be reformulated
as the linear program
\begin{equation} \label{eq:linprog}
\text{Minimize} \quad \mathbbm{1}^T x^+ + \mathbbm{1}^T x^-
\quad \text{subject to} \quad A x^+ - A x^- \! = b
\quad \text{and} \quad x^+, x^- \geq 0,
\end{equation}
with $x=x^+ -x^-$, and solved by e.g.\ the dual simplex algorithm
or an interior-point method
\cite{nocedal1999numerical,huangfu2018parallelizing}.
For our numerical tests, we have used the implementations
provided by the 2024a release of MATLAB's Optimization Toolbox.
For the solution of the 2-norm minimization problem,
we have used in this test (and all other tests in
Section~\ref{ssec:validation-of-parameters})
a direct method based on the sparse
QR factorization provided by the SuiteSparseQR library
\cite{davis-algorithm}, version 4.3.3 available from \cite{SuiteSparse}.

The integration domain for these tests is the ellipse $\Omega_1$,
the polynomial order is $q = 5$, and 
the sets of nodes $X,Y,Z$ are generated as usual by the
advancing front algorithm with $Z \subset Y$.
The Runge function $f_1$ is centered at $x_R = (0,0)$.

Table~\ref{tab:choice-norm-convergence} reports quadrature errors
and stability constants of the formulas produced by
Algorithm~\ref{algo:mfd} when minimizing the norms
$\norm{(\muw,\sigmav)}_1$ and $\norm{(\muw,\sigmav)}_2$,
denoted in the table by $L^1$ and $L^2$, respectively.
Separate rows are devoted to the 1-norm minimization computed
by the dual simplex algorithm and the interior-point method,
because their results are significantly different.
For the same tests, Table~\ref{tab:choice-norm-sparsity} reports
further information such as the runtime $t_{\text{solve}}$
of the solver, the number of the rows $m$ of the system matrix $A$,
and the number of nonzero weights in the quadrature
formulas $\muw$ and $\sigmav$. The computations have been performed 
on an Intel %
NUC Mini PC system with a Core %
i7-1165G7 CPU and 64 GB of DDR4 RAM.
For each value of the spacing parameter $h$, results
are averaged over 8 different seeds. To enhance readability, we
have rounded to the nearest integer the values that are meant
to be integers, such as the number $m$ of the rows of the system
matrix.

It turned out that the interior-point method for 1-norm minimization may
stall or lose feasibility when solving the linear
program~\eqref{eq:linprog}.
For this reason, we have included the column "conv" in the tables
that reports how many runs have converged out of 8,
and all statistics are computed only for the seeds for
which convergence has been achieved. 

\begin{table}[htbp!]
\caption{Comparison of quadrature errors and average
stability constants for different minimization norms and solvers.
MFD algorithm on ellipse $\Omega_1$ with polynomial
order $q=5$ and 8 distinct seeds. Column conv reports
how many runs have converged out of 8. Value 1 in column
$K_v$ implies that the formula $(Z, v)$ was positive for all
seeds.}
\label{tab:choice-norm-convergence}
\centering
\begin{small}
\pgfplotstabletypeset[
	create on use/normsolver/.style={
        create col/set list={
        	$L^1$ dual simplex, $L^1$ interior-point, $L^2$ SuiteSparseQR,
        	$L^1$ dual simplex, $L^1$ interior-point, $L^2$ SuiteSparseQR,
        	$L^1$ dual simplex, $L^1$ interior-point, $L^2$ SuiteSparseQR
        }
    },
	create on use/converged/.style={
        create col/set list={
        	8/8, 7/8, 8/8, 8/8, 6/8, 8/8, 8/8, 0/8, 8/8
        }
    },
	columns/normsolver/.style={
		column name = {Norm and solver},
		column type = {l},
		string type
	},
	columns/converged/.style={
		column name = {conv},
		column type = {c},
		string type
	},
	every nth row={3}{
		before row=\midrule
	},
	columns/hY/.append style={
		precision=1
	},
	columns/stabw/.append style={
		precision = 3
	},
	columns/stabv/.append style={
		precision = 3, int detect, sci generic={}
	},
	columns={
		normsolver,hY,converged,
		sreintrungerms,srebndrungerms,
		sreintfrankerms,srebndfrankerms,stabw,stabv
	}]
{test-choice-norm.csv}
\end{small}
\end{table}

\begin{table}[htbp!]
\caption{Comparison of solver runtime and sparsity of the
quadrature formulas for different minimization norms and solvers.
MFD algorithm on ellipse $\Omega_1$ with polynomial
order $q=5$ and 8 distinct seeds. Column conv reports
how many runs have converged out of 8, $m$ denotes the number of
rows of the system matrix.}
\label{tab:choice-norm-sparsity}
\centering
\begin{small}
\pgfplotstabletypeset[
	create on use/normsolver/.style={
        create col/set list={
        	$L^1$ dual simplex, $L^1$ interior-point, $L^2$ SuiteSparseQR,
        	$L^1$ dual simplex, $L^1$ interior-point, $L^2$ SuiteSparseQR,
        	$L^1$ dual simplex, $L^1$ interior-point, $L^2$ SuiteSparseQR
        }
    },
	create on use/converged/.style={
        create col/set list={
        	8/8, 7/8, 8/8, 8/8, 6/8, 8/8, 8/8, 0/8, 8/8
        }
    },
	columns/normsolver/.style={
		column name = {Norm and solver},
		column type = {l},
		string type
	},
	columns/converged/.style={
		column name = {conv},
		column type = {c},
		string type
	},
	every nth row={3}{
		before row=\midrule
	},
	columns/hY/.append style={
		precision=1
	},
	columns/stabw/.append style={
		precision = 3
	},
	columns/stabv/.append style={
		precision = 3, int detect, sci generic={}
	},
	columns={
		normsolver,hY,converged,tsolve,NrowsA,NY,nnzw,NZ,nnzv
	}]
{test-choice-norm.csv}
\end{small}
\end{table}

As we see in Table~\ref{tab:choice-norm-convergence},
minimization of the 1-norm leads on average to smaller
quadrature errors compared to the 2-norm,
although the advantage becomes less significant
as $h$ decreases.
The $L^1$ methods also deliver more stable formulas,
especially on the boundary: the appearance of the exact
value 1 for $K_\sigmav$ indicates that a positive formula
$(Z,\sigmav)$ has been found for all seeds, because
Algorithm~\ref{algo:mfd} relies on the non-homogeneous
constraint $(\fhat,\ghat)\equiv(0,1)$.
The $L^2$ method delivers positive formulas
on the boundary only for a few seeds.

The $L^1$ methods, however, struggle with computational
efficiency and robustness. The interior-point method
does not converge for any seed when $h = 0.025$,
even if parameters such as \texttt{OptimalityTolerance},
\texttt{ConstraintTolerance}, and \texttt{MaxIterations}
are increased significantly from their default values.
The dual simplex algorithm always terminates successfully
in these tests, but its running time is orders of magnitude
longer compared to SuiteSparseQR. This is the reason why
the 2-norm is recommended in Algorithms~\ref{algo:mfd}
and \ref{algo:spline} for general use.

The two algorithms for the minimization of the 1-norm
that we have considered so far are not the only iterative
algorithms that struggle with the system $Ax=b$: the ones
for the minimization of the 2-norm, such as LSQR \cite{paige-lsqr},
also require too many iterations to be practical,
unless an effective preconditioning strategy is employed.
(Recall that the condition number of the minimum-norm
least squares problem naturally grows as $h \to 0$.)
Since out-of-the-box performance of standard left and right
preconditioners such as symmetric successive
over-relaxation and incomplete Cholesky factorization
was not found to be satisfactory, we opted for a direct solver
based on a sparse QR factorization of $A^T$, as implemented
in the command \texttt{spqr\_solve} provided by the
MATLAB interface to the open source library SuiteSparseQR
\cite{davis-algorithm}. Fill-in during sparse QR factorization
is mitigated by the reordering algorithms used by SuiteSparseQR,
which were found to be quite effective, especially in the
case of 2D domains.
The QR factorization provided by SuiteSparseQR is rank-revealing,
and the numerical rank of $A$ is taken into account when
computing the solution to $Ax=b$ with minimal 2-norm.
To maximize accuracy, the tolerance $10^{-15}$
was passed to \texttt{spqr\_solve} for numerical rank
computation in the tests in this paper, unless stated otherwise.

By the well-known properties of the linear program
\eqref{eq:linprog}, its solution is not necessarily unique, and
there is at least one solution $x$ with the number of nonzero
components not exceeding the number $m$ of rows in the matrix $A$.
The dual simplex algorithm is known to terminate at a solution
with this property, and indeed we see that
$\nnz(\muw)+\nnz(\sigmav)$ is less than $m$ in all cases
when this method is used, meaning that the combined
quadrature weights $x = (\muw,\sigmav)^T$ have a significant
proportion of zeros, which may be seen as an advantage.
Moreover, the quadrature formulas $(Y,\muw)$ and $(Z,\sigmav)$
are also sparse when considered individually, because
$\nnz(\muw) < N_Y$ and $\nnz(\sigmav) < N_Z$.
In contrast to this, all weights coming from the interior-point
method and the 2-norm minimization are always nonzero.
Even sparser weights can be obtained by the dual simplex
method with a larger ratio $h_X/h$, although the accuracy
of the resulting quadrature formulas will be reduced.

\subsection{Convergence tests in 2D and 3D}
\label{ssec:conv}

Now that numerical evidence for the choices made
in Algorithms~\ref{algo:mfd} and \ref{algo:spline}
has been given, we move on to the evaluation
of relative quadrature errors
for $h \to 0$, i.e.\ for an increasing number of
quasi-uniform quadrature nodes, on the test 
domains $\Omega_2,\Omega_3\subset\R^2$ and $\Omega_5,\Omega_6\subset\R^3$
depicted in Figure~\ref{fig:domains}.

\begin{figure}[tbp]
\centering
\hspace*{\fill}
\subcaptionbox{Ellipse $\Omega_1$.}{
	\includegraphics[width=0.22\textwidth]{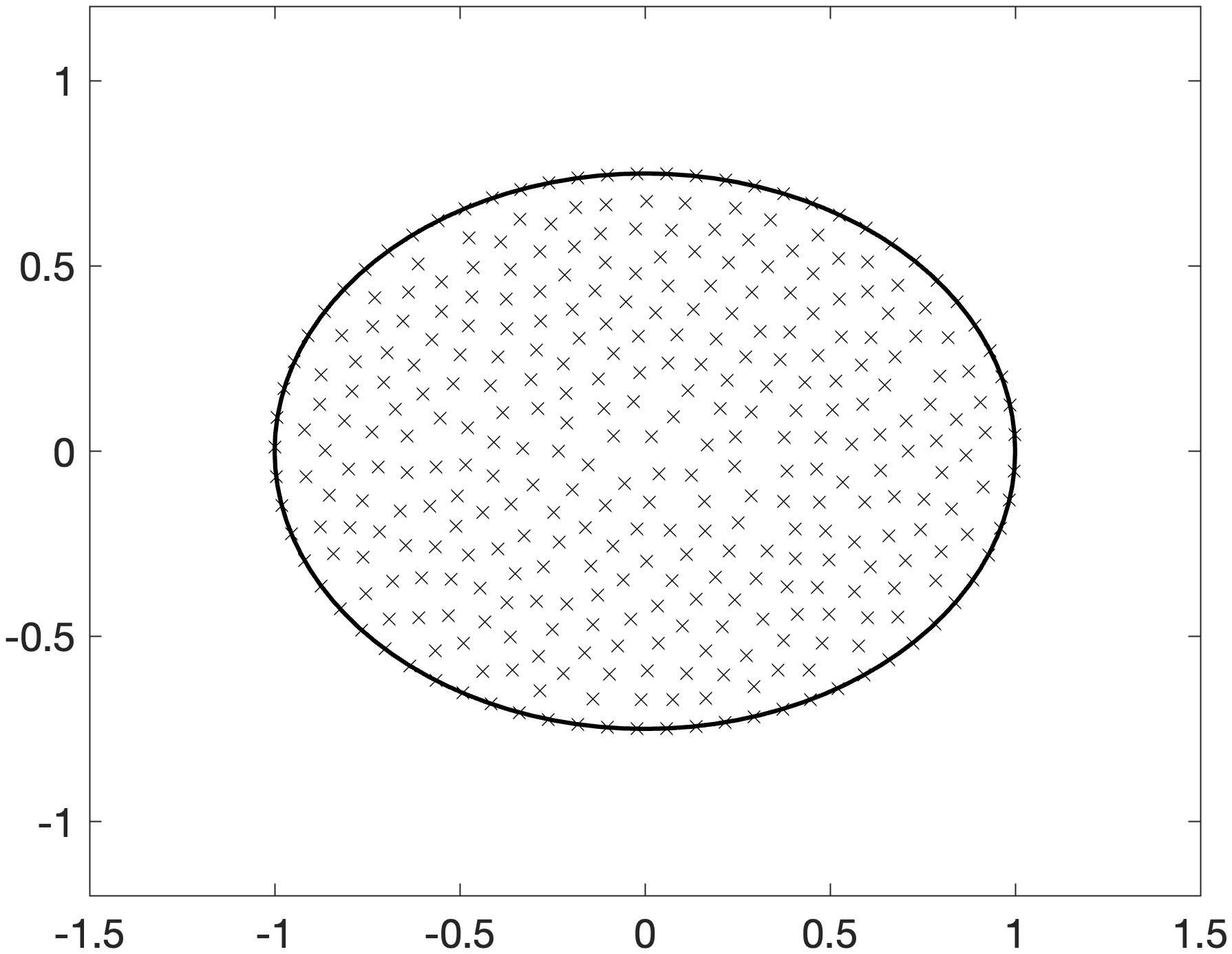}
} \hspace*{\fill}
\subcaptionbox{Disk sector $\Omega_2$.}{
	\includegraphics[width=0.22\textwidth]{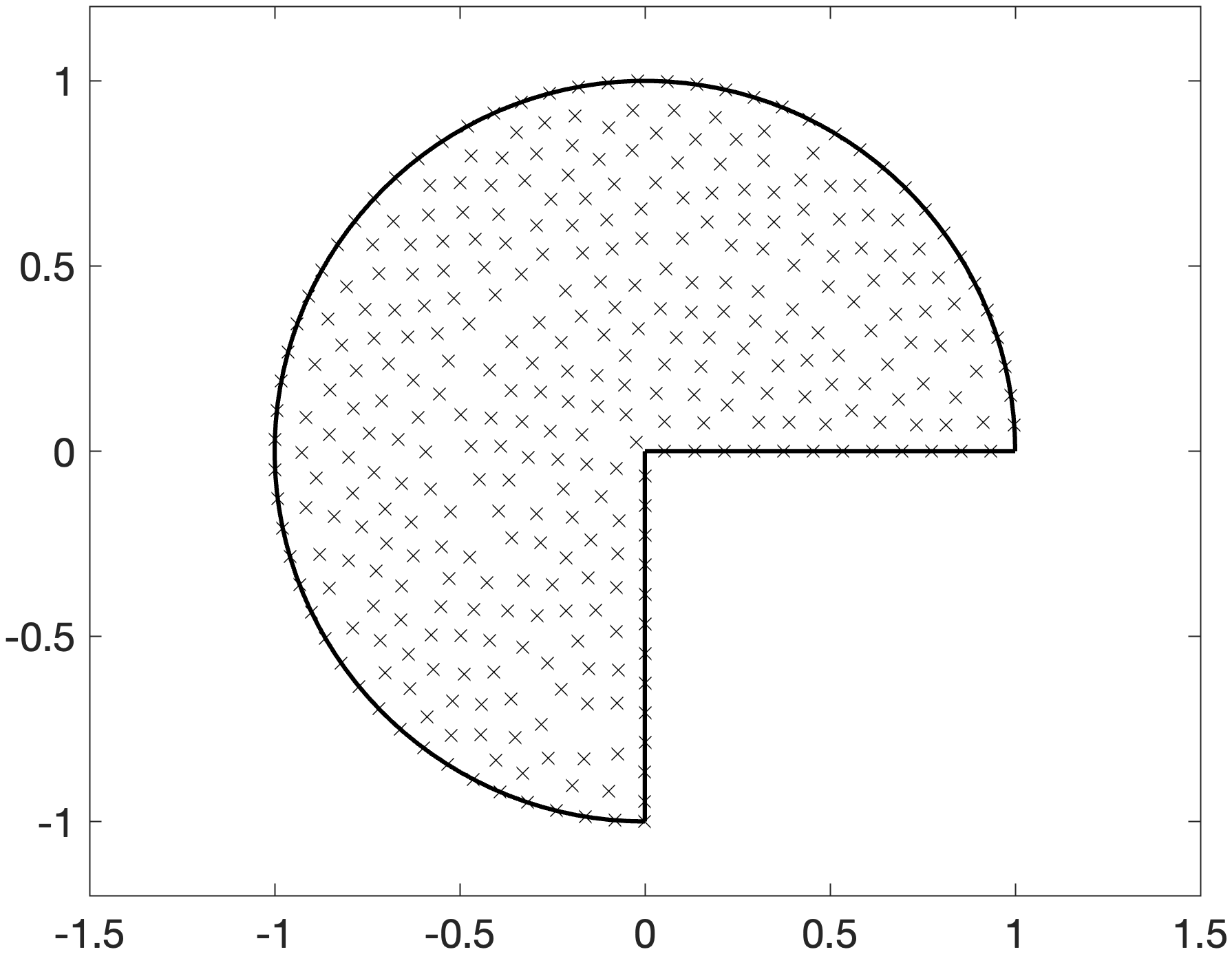}
} \hspace*{\fill}
\subcaptionbox{Cassini oval $\Omega_3$.}{
	\includegraphics[width=0.22\textwidth]{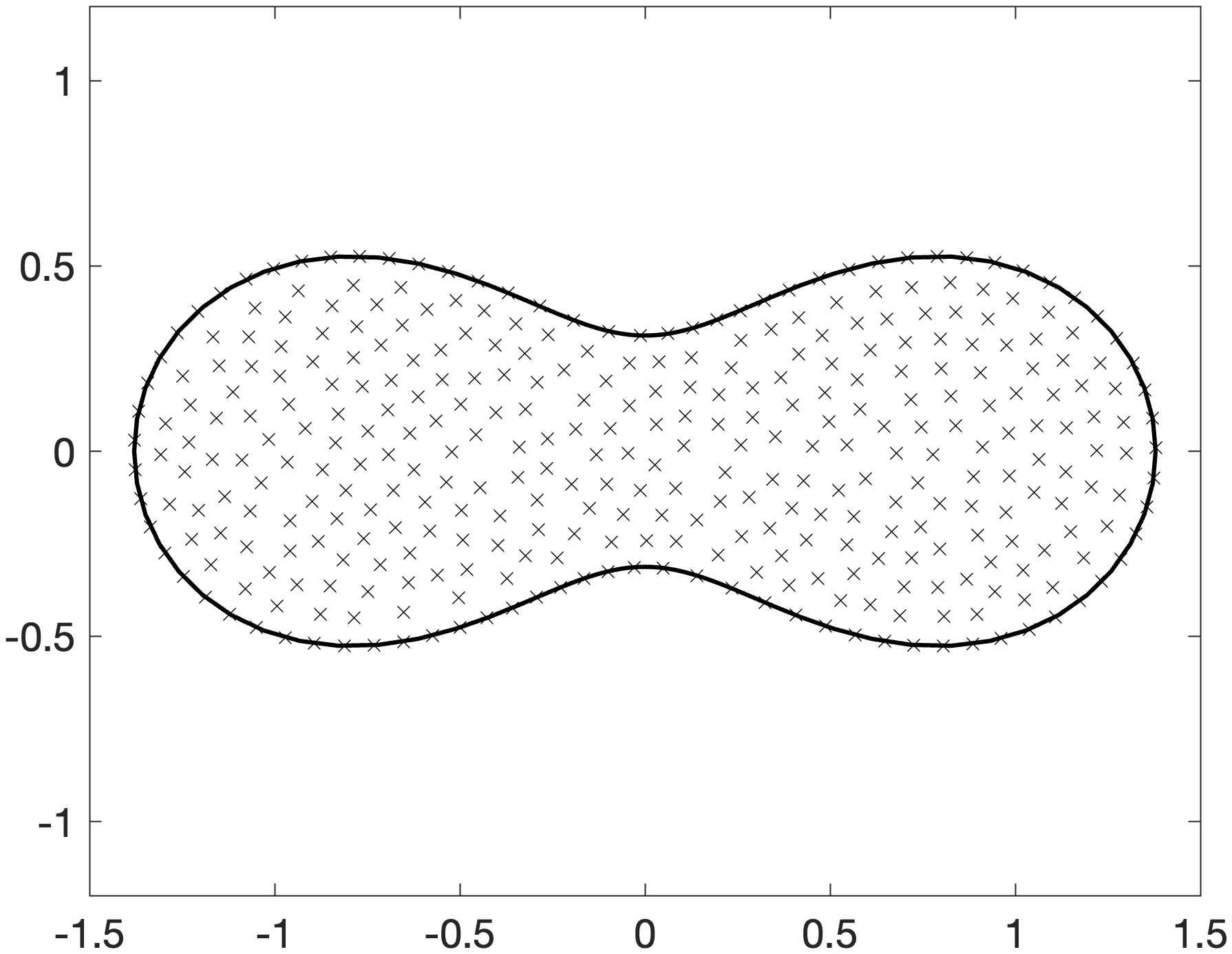}
} \hspace*{\fill} \\[1em]
\subcaptionbox{Ellipsoid $\Omega_4$.}{
	\includegraphics[width=0.22\textwidth]{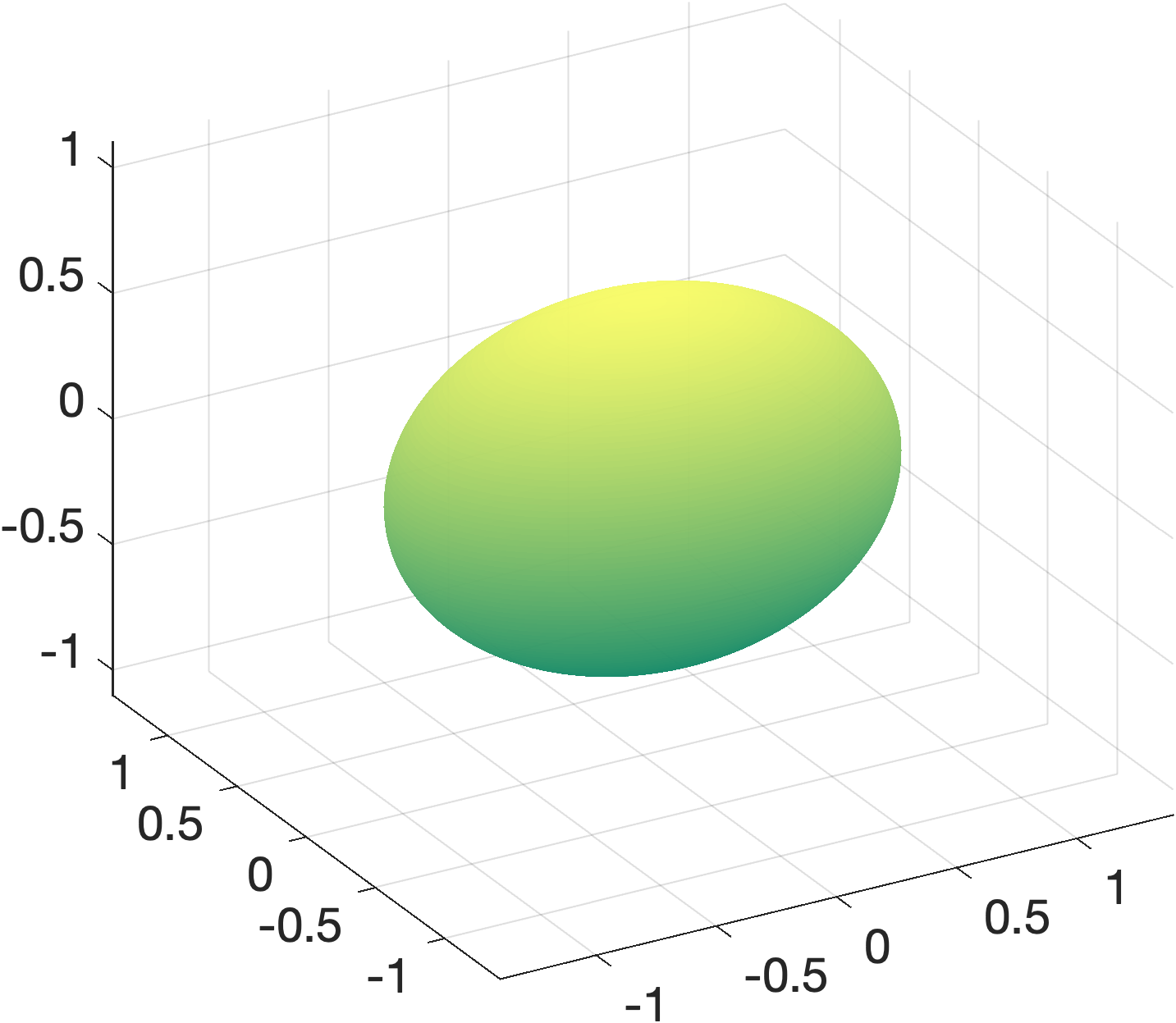}
} \hfill
\subcaptionbox{L-shaped domain $\Omega_5$.}{
	\includegraphics[width=0.22\textwidth]{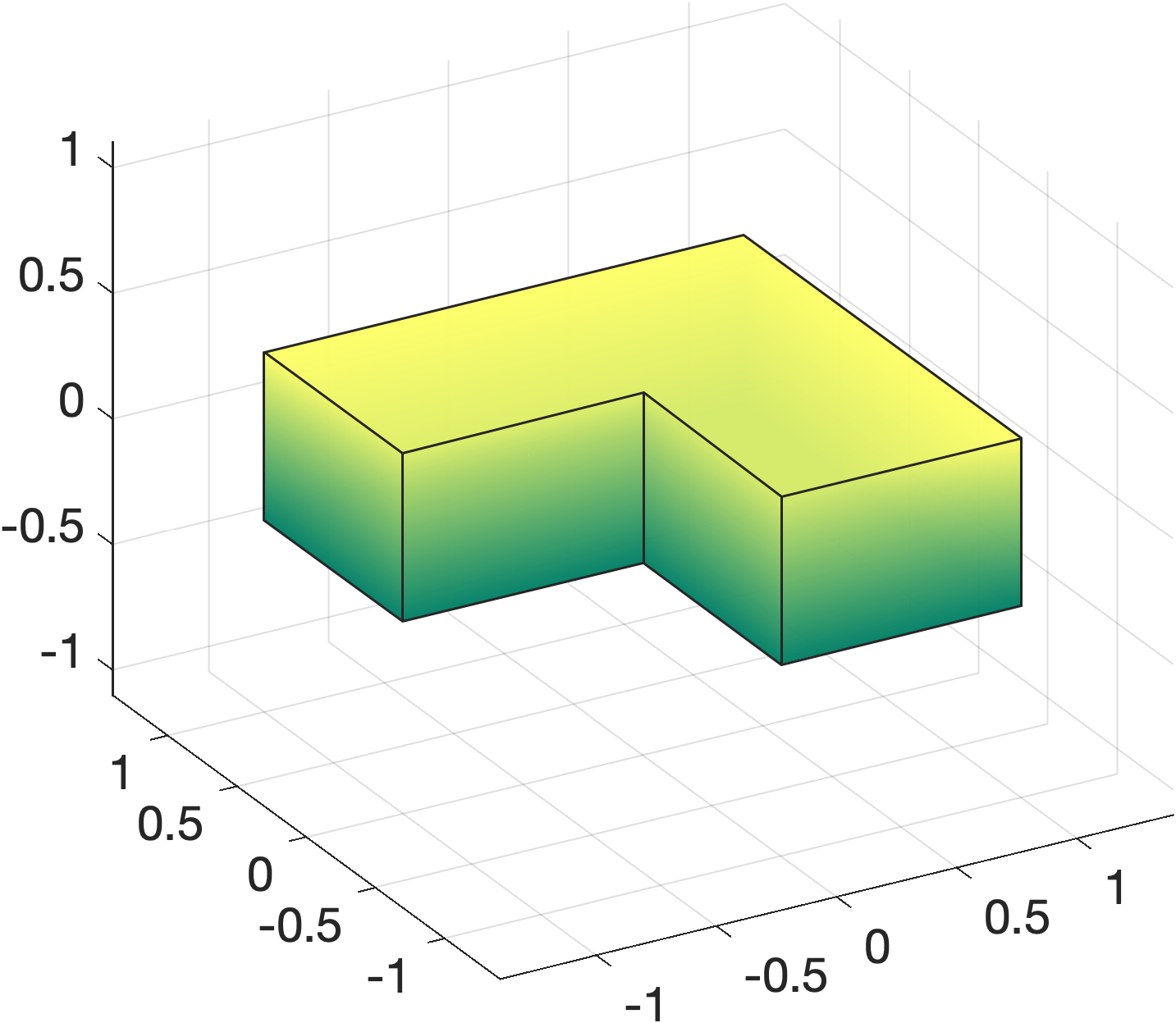}
} \hfill
\subcaptionbox{Torus $\Omega_6$.}{
	\includegraphics[width=0.22\textwidth]{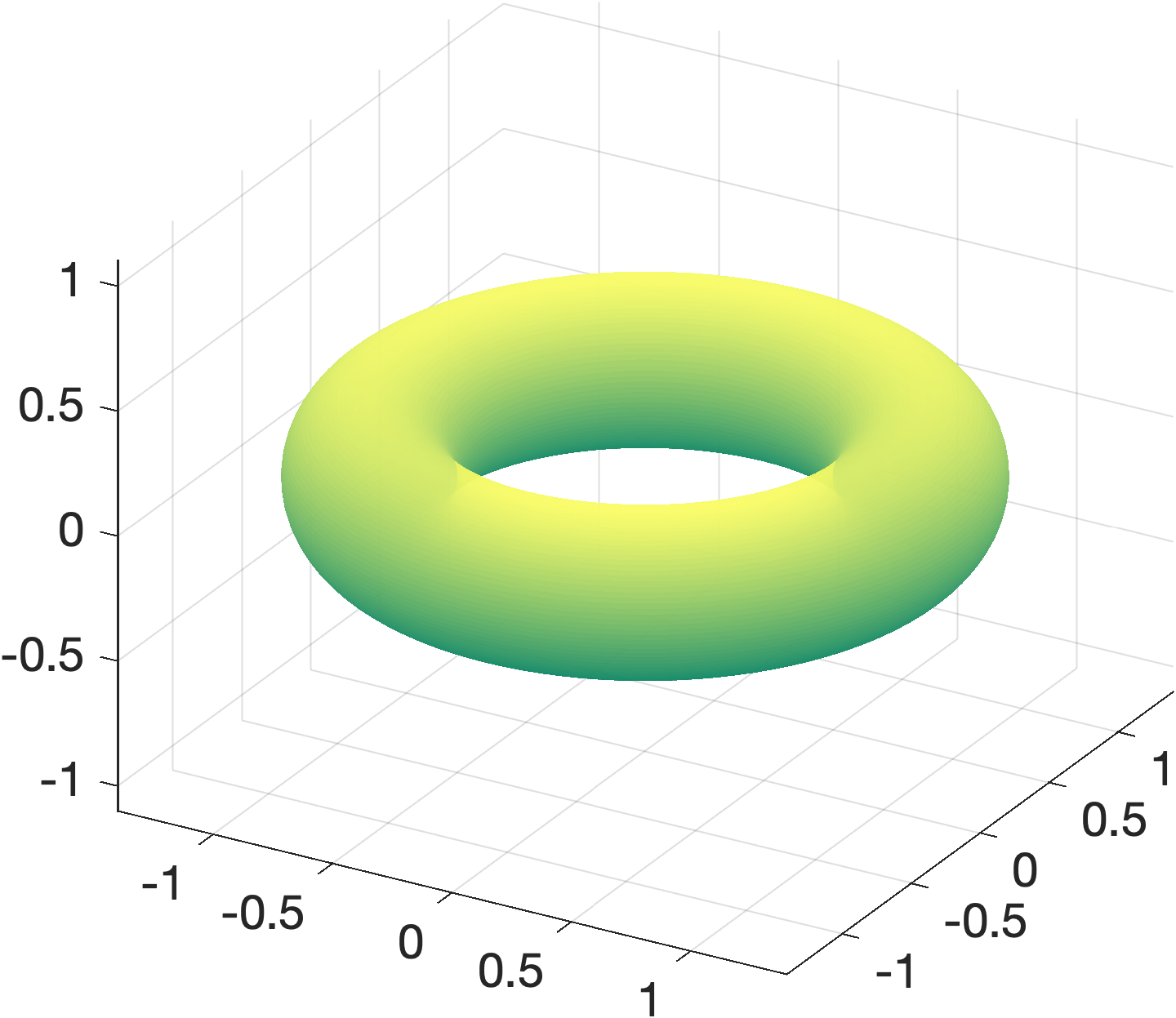}
} \hfill
\subcaptionbox{Deco-tetrahedron $\Omega_7$. \label{subfig:decotet}}{
	\includegraphics[width=0.22\textwidth]{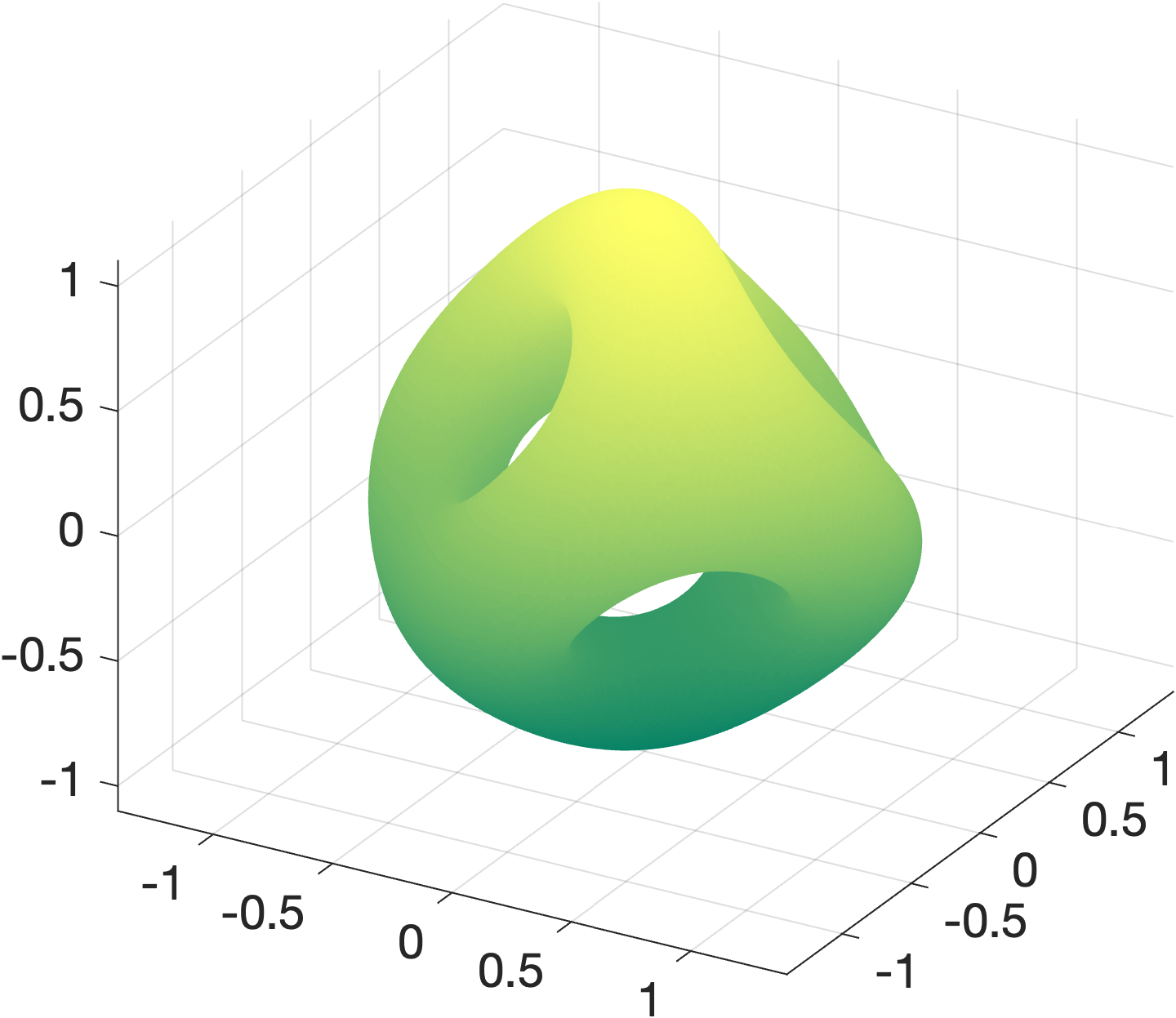}
} \\[1em]
\caption{Integration domains used in numerical tests.
The node set $Y$ generated by the advancing front method
for a closed quadrature formula with $h=0.08$ is displayed
as crosses on top of the 2D domains.}
\label{fig:domains}
\end{figure}

Domain $\Omega_2$ is the disk sector
already defined in Section~\ref{sssec:choice-LBcal}.
The other 2D domain is the region enclosed by a Cassini oval,
a quartic plane curve defined as the locus
of points such that the product of the distances
to two fixed points $(-a,0)$ and $(a,0)$,
called \emph{foci}, is a constant $b^2 \in \R_+$:
\[
\Omega_3 = \Bigl\{ (x_1,x_2) \in \R^2 \mathrel{\Big|}
\bigl((x_1+a)^2+x_2\bigr)^2
\bigl((x_1-a)^2+x_2\bigr)^2 - b^4 < 0 \Bigr\}.
\]
For $a < b < a\sqrt{2}$, the domain $\Omega_3$
has smooth boundary and is simply connected,
although it is not convex: its shape resembles
the number eight rotated sideways. %
For our numerical experiments, we have taken
$a = 0.95$ and $b = 1$ to stress its non-convex shape
and to ensure that its area is close to that of $\Omega_2$.
As far as 3D domains are concerned, $\Omega_5$ is
an L-shaped domain obtained by cutting one quadrant
from a rectangular cuboid with sides of length $(2,2,2/3)$,
and $\Omega_6$ is the solid torus with major radius $R = 1$
and minor radius $r = 0.32$.

For all tests we use node sets $X,Y,Z$ generated
by the advancing front algorithm provided by the library
NodeGenLib (see Section~\ref{sssec:choice-of-nodes}),
with $Z \subset Y$ and spacing parameter $h$ ranging
from 0.005 to 0.16 on 2D domains,
from 0.0198 to 0.1 on 3D domains using the MFD algorithm,
and from 0.0157 to 0.1 on 3D domains using BSP algorithm.
All the 2D domains have been scaled to have approximately
the same area, and all 3D domains have been scaled to have
approximately the same volume. This implies
that the advancing front algorithm generates a similar
number $N_Y$ of internal quadrature nodes for a given
spacing parameter $h > 0$, and so the plots of interior
quadrature errors can be directly compared across different
domains of the same dimension.

The exact number of quadrature nodes in $Y$ and $Z$, the size
of the system matrix $A$, and the number of its nonzero
elements are reported in Table~\ref{tab:nodes}
for the domains $\Omega_2$ and $\Omega_5$, exemplifying 2D
and 3D domains, respectively. The data for the other domains is
very similar to $\Omega_2$ or $\Omega_5$, except that the
size of $Z$ varies somewhat as it depends on the measure of
$\partial\Omega$.
Variables $m_{MFD}$ and $m_{BSP}$ denote the number
of rows of $A$ in the case of Algorithms~\ref{algo:mfd}
and \ref{algo:spline}, respectively, whereas $nnz_{\text{MFD}}$
and $nnz_{\text{BSP}}$ denote the respective number of nonzero
elements in $A$. These quantities are reported for the order $q=5$.
The ratios $m_{MFD}/N$ and $m_{BSP}/N$ are
below one, and this confirms that the linear system
$Ax=b$ is underdetermined. Numbers for $nnz_{\text{MFD}}$
and $nnz_{\text{BSP}}$ agree with the estimates in
\eqref{nnzA}.

\begin{table}[htbp!]
\caption{Number of quadrature nodes and the size and
sparsity of system matrix $A$ for the tests on the
disk sector $\Omega_2$ and L-shaped domain $\Omega_5$,
with polynomial order $q = 5$.
Variables $m_{\text{MFD}}$ and $m_{\text{BSP}}$ denote
the number of rows in $A$, and $nnz_{\text{MFD}}$
and $nnz_{\text{BSP}}$ the number of nonzero
elements in $A$, produced by MFD and BSP algorithms, 
and $N=N_Y + N_Z$ is the number of columns of $A$.}
\label{tab:nodes}
\centering
\begin{small}
\pgfplotstabletypeset[
	create on use/domain/.style={
        create col/set list={
        	$\Omega_2$, $\Omega_2$, $\Omega_2$,
        	$\Omega_2$, $\Omega_2$, $\Omega_2$,
        	$\Omega_5$, $\Omega_5$, $\Omega_5$, $\Omega_5$,
        	$\Omega_5$, $\Omega_5$, $\Omega_5$, $\Omega_5$
        }
    },
	columns/domain/.style={
		column name = {Domain},
		column type = {c},
		string type
	},
	every row no 5/.append style={
		after row=\midrule
	},
	columns={
		domain, hY, NY, NZ,
		NrowsA_mfd, sizeratioA_mfd, nnzA_mfd,
		NrowsA_bsp, sizeratioA_bsp, nnzA_bsp
	}]
{test-convergence-sizeA.csv}
\end{small}
\end{table}

For most tests, the minimum-norm solution to the
underdetermined system $Ax=b$ is found with
SuiteSparseQR, as described
in Section~\ref{sssec:choice-norm}.
The range of the spacing parameter $h$ ensures
that the sparse QR factors of $A^T$ computed by
\texttt{spqr\_solve} fit into the 64 GB of RAM
of the system used to run the tests.
Even though fill-in is mitigated by reordering algorithms
such as colamd (column approximate minimum degree permutation),
space and time complexity of the sparse QR factorization
are superlinear, and this prevents values of $h$
below $\approx 0.02$ from being used in our 3D tests.

When $A$ has full rank, the minimum-norm solution $x$
can be found in the form $x = A^T y$, which leads
to the so-called \emph{normal equations
of the second kind} $A A^T y = b$.
The matrix $A$ assembled by Algorithms~\ref{algo:mfd}
and \ref{algo:spline} typically does not have full rank,
so this made us consider the regularized system
\[
(A A^T + \omega I) \tilde{y} = b,
\]
with a sufficiently small positive constant $\omega$.
The matrix $A A^T + \omega I$ inherits sparsity from $A$,
and admits a Cholesky decomposition because it is
symmetric positive definite by construction.
Remarkably, in the BSP case,
a sparse Cholesky factorization with fill-reducing reordering
of $A A^T + \omega I$, such as the one computed by the
command \texttt{lchol} in the CHOLMOD
library \cite{chen2008algorithm}, delivers a Cholesky
factor whose number of nonzero elements is only at most 50\%
larger than $nnz_{\text{BSP}}$, and so is empirically a linear
function of $N_Y + N_Z$. This implies that $\tilde{y}$
can be computed with
time and space complexity linear in the number
of quadrature nodes, which is asymptotically optimal.
The coefficient $\omega$ is chosen as %
$4 \cdot 10^{-16}$ multiplied by the smallest power
of two such that $A A^T + \omega I$ is numerically
positive definite, i.e.\ so that the command \texttt{lchol}
can find the Cholesky factor successfully.
Values of $\omega$ around $10^{-15}$ are typical.
The same asymptotic reduction in fill-in during
factorization is not observed in the MFD case,
an important difference for which we currently have no explanation.

Since the perturbation of size $\omega$ affects
the accuracy of the weights $w,v$ and
causes unwanted saturation of the quadrature errors
at around $10^{-10}$, see for example Figure~\ref{subfig:saturation},
we decided to use the Cholesky-based
solver only in the BSP case for 3D domains.
For all other tests, we use the more expensive
but also more accurate QR-based solver.
This is the reason why, on 3D domains, a slightly
wider range of spacing parameters $h$ is considered
in the BSP case compared to the MFD case.

Results for each test domain are presented in 
Figures~\ref{fig:disk-sector} to \ref{fig:torus}.
Relative quadrature errors for the polynomial orders 
$q$ between 4 and 8
are shown as functions of the spacing parameter $h$.
Results obtained by Algorithms~\ref{algo:mfd}
and \ref{algo:spline} are shown
in logarithmic scale side by side, with a common
range on the horizontal and vertical axes to aid
visualization and comparisons.
Reference slopes corresponding to different
convergence orders $q-1$ are shown using dotted lines.
The center $x_R$ of the Runge function $f_1$ for each test
domain is specified in the caption of its corresponding figure,
and is chosen so that $x_R \in \Omega$.
Boundary quadrature errors for the test function $g_2$ were omitted
for the sake of brevity, as these values are very similar to the
ones for $g_1$ in all cases.
Just like the numerical tests in
Section~\ref{ssec:validation-of-parameters},
relative quadrature errors are computed as the RMS
over multiple seeds, according to the formula \eqref{eq:err-rms}.
For the tests in this section, 8 distinct seeds were used,
a value large enough to smoothen the convergence
curves to the point where the empirical orders of
convergence (EOC) of the methods for different
$q$ can be estimated reliably using least-squares
linear regression in logarithmic scale (see the EOC values
in the legends of Figures~\ref{fig:disk-sector} to \ref{fig:torus}).
We remark, however, that high-order convergence would still be
evident even for a single seed, and that the
averaging step is not required to get accurate
and reliable results in applications.

Note that quadrature errors %
are dropped from the plots
whenever the linear system $Ax=b$ is overdetermined,
i.e.\ the matrix $A$ has more rows than columns.
Moreover, in all MFD tests, values are dropped from
the plots whenever $n_L > N_X$, that is,
when the target size of the sets of influence for
the discretization of $\Lcal$ exceeds the size of the set $X$.
These two measures can be understood as rough safeguards
against ill-chosen parameters $h$ and $q$,
and are only needed on the coarsest node distributions.

We observe in all the figures a high order of convergence
for both methods MFD and BSP, including the non-smooth domains
$\Omega_2$ and $\Omega_5$, with the slopes of the error curves
increasing with $q$ and almost always exceeding the expected
order $q-1$. This behavior demonstrates superconvergence with
respect to our theoretical upper bounds
\eqref{eq:fbamfd}, \eqref{eq:gbamfd},
\eqref{eq:fbaBS}, and \eqref{eq:gbaBS}.
In the 2D setting, errors close to
the accuracy limits of double-precision floating-point
numbers are reached in both tests, with values
around $10^{-15}$ being typical for the highest order
methods on the largest number of nodes considered for the tests.
To reach such small errors in 2D tests, the rank detection
tolerance used by SuiteSparseQR (see
Section~\ref{sssec:choice-norm})
had to be manually set to $10^{-15}$, because with default
tolerances the errors would plateau around $10^{-12}$.
In the 3D setting, where such high precision is not reached,
the rank detection tolerance used by SuiteSparseQR
was set to $10^{-12}$, a choice motivated by better errors
and stability constants for coarser sets of nodes.

Concerning stability constants, the worst-case values
of $K_w$ and $K_v$ across all test domains and all seeds
are plotted for $q=5$ and $q=7$ as a function of $h$ for
both MFD and BSP in Figure~\ref{fig:maxstab}.
The figure confirms the overall stability of the quadrature
formulas produced by the two algorithms, and show that stability
constants improve as $h$ or $q$ are reduced, with the only
exception of a mild increase for small values
of $h$ in 2D, for which the integration errors
are however close to machine precision, see the convergence plots
in Figures~\ref{fig:disk-sector} and \ref{fig:cassini-oval}.
For the smallest values of $h$ considered in the tests,
$K_w$ never exceeds 5, and $K_v$ is very close to 1.
Moreover, for $q=5$, all stability constants are quite small
even for the coarsest sets of nodes.
In all tests, the boundary quadrature formula
is significantly more stable than the interior one,
with values barely above 1 for $q = 5$, regardless of $h$.

Comparison of the results for MFD and BSP reveals
similar behavior between the two approaches, with comparable
quadrature errors across all tests for methods with the same
polynomial order $q$, and comparable stability constants,
except for the case $q = 8$ and $h > 0.05$.
The main difference in accuracy, although modest,
is that the MFD method performs better than BSP
on 3D domains with non-smooth boundary such as $\Omega_5$,
whereas the BSP method performs better than MFD
for the integration of the Franke function
(the smoother one among $f_1$ and $f_2$) on the torus.

Note that the numbers in the columns
$m_{\text{MFD}}$ and $m_{\text{BSP}}$ of Table~\ref{tab:nodes}
indicate that the system matrices for MFD
are much larger than those for BSP, which increases the fill-in
produced by direct solvers.
On the one hand, the difference in memory usage can
be quite large, especially if the Cholesky-based solver is used.
On the other hand, as can be seen in Table~\ref{tab:nodes}
and the estimate \eqref{nnzA}, 
the total number of nonzeros in the larger system matrix for MFD
is significantly smaller compared to the BSP matrix in the more
computationally demanding 3D setting, even though we have not
attempted to optimize the size of the sets of influence in MFD, as
discussed at the end of Section~\ref{sssec:spacing}. This may
bring computational advantages for MFD if an effective
preconditioning strategy is devised to make iterative
methods competitive with direct solvers for the solution
of the minimum-norm problem.
Another advantage of MFD is that values of $h$ larger
than 0.04 can be used with high polynomial orders
such as $q=6,7,8$ without a significant increase
in the stability constants.

In the case of Algorithm~\ref{algo:spline}, the support
of some B-splines may have small intersection with $\Omega$,
and this is known to be a source of instability in
immersed methods, see e.g.\
\cite{chu2022stabilization,davydov2014two,de2023stability}. 
In our numerical experiments, however, we have not
observed any instability as $h \to 0$,
and a likely explanation is that the minimization of 2-norm has
a regularizing effect on the computation of quadrature weights.
Nevertheless, it is possible that stabilization techniques
from the literature on immersed methods could further
improve the accuracy and stability of the BSP approach.

\begin{figure}[p]
\centering
\subcaptionbox{Errors for $f_1$ with MFD weights.}{
	\includegraphics[width=0.47\textwidth]{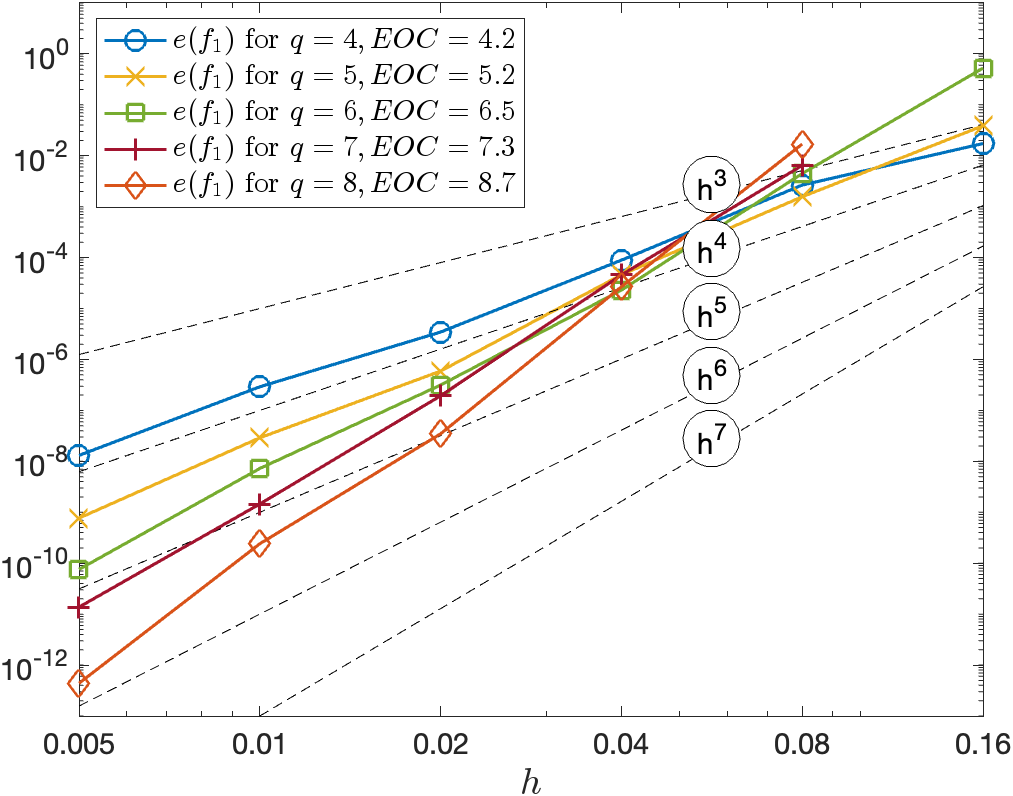}
} \hfill
\subcaptionbox{Errors for $f_1$ with BSP weights.}{
	\includegraphics[width=0.47\textwidth]{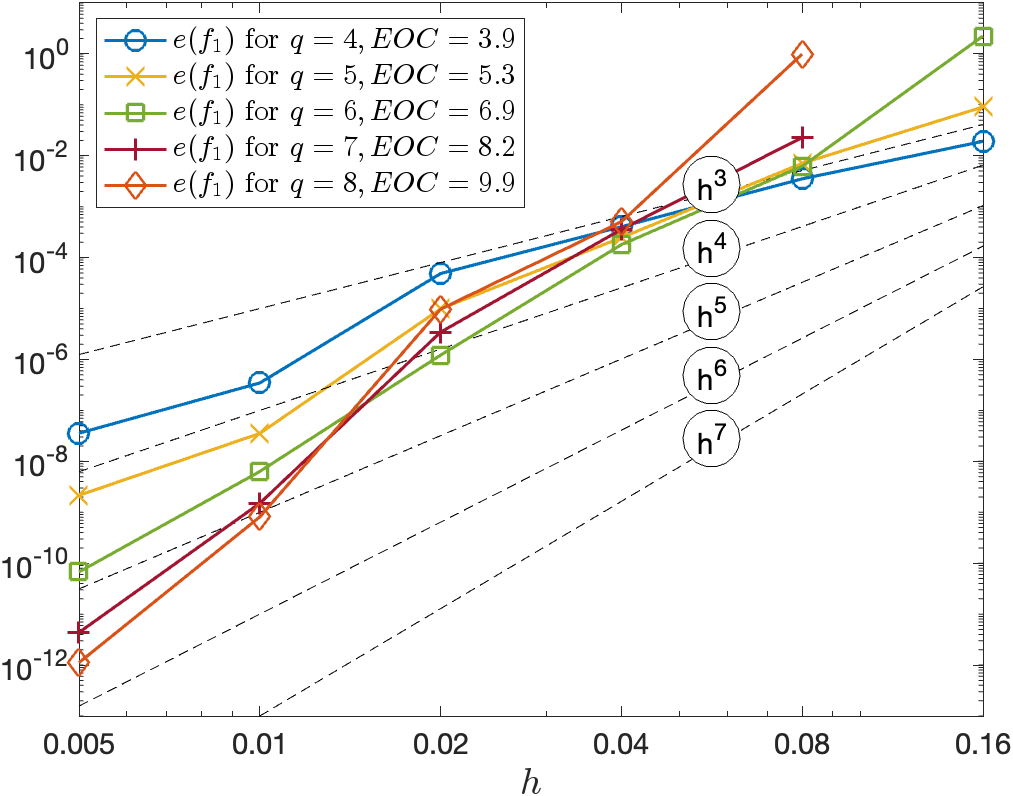}
} \\[3em]
\subcaptionbox{Errors for $g_1$ with MFD weights.}{
	\includegraphics[width=0.47\textwidth]{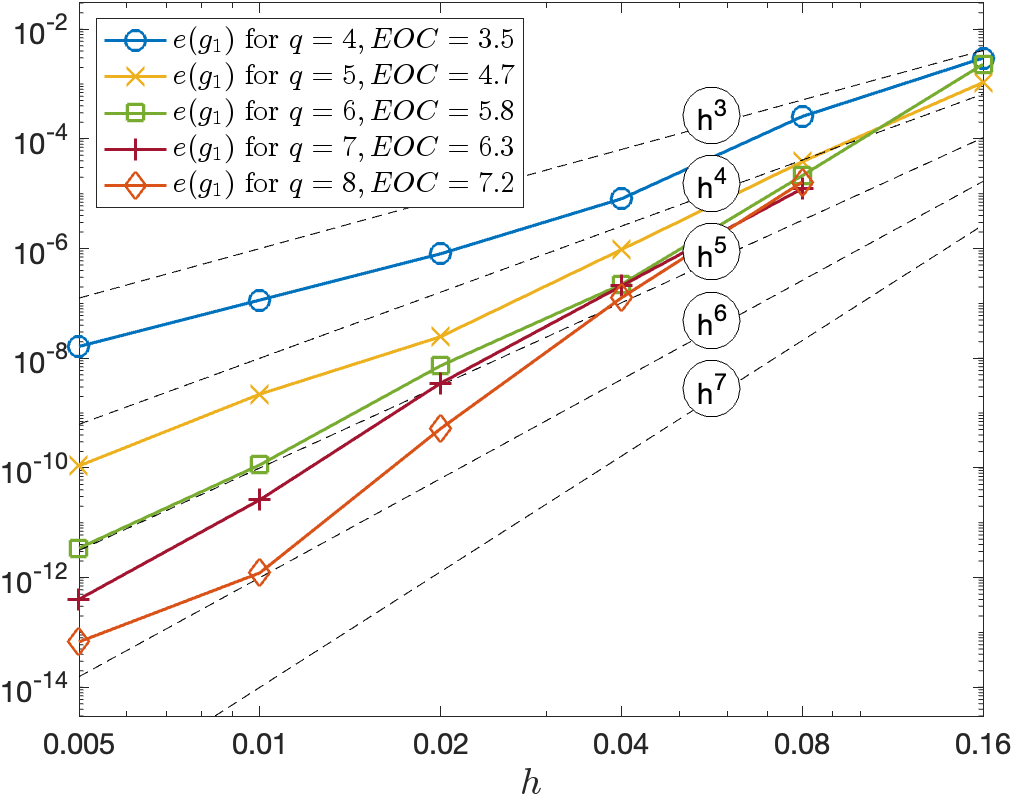}
} \hfill
\subcaptionbox{Errors for $g_1$ with BSP weights.}{
	\includegraphics[width=0.47\textwidth]{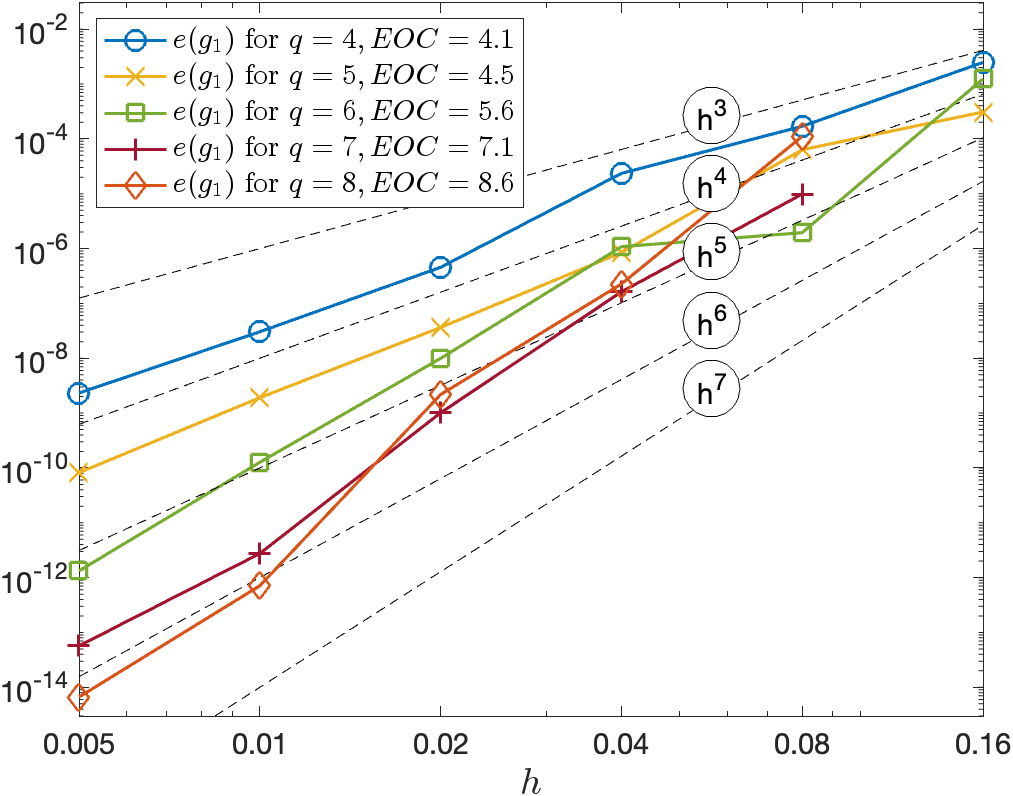}
} \\[3em]
\subcaptionbox{Errors for $f_2$ with MFD weights.}{
	\includegraphics[width=0.47\textwidth]{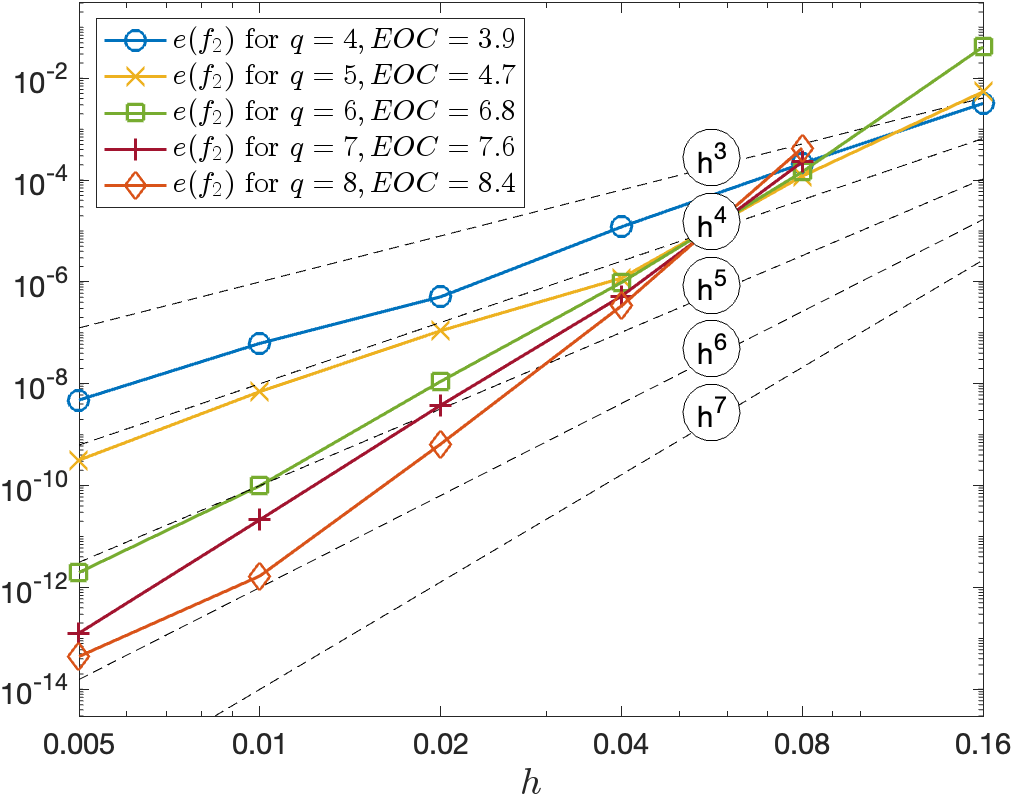}
} \hfill
\subcaptionbox{Errors for $f_2$ with BSP weights.}{
	\includegraphics[width=0.47\textwidth]{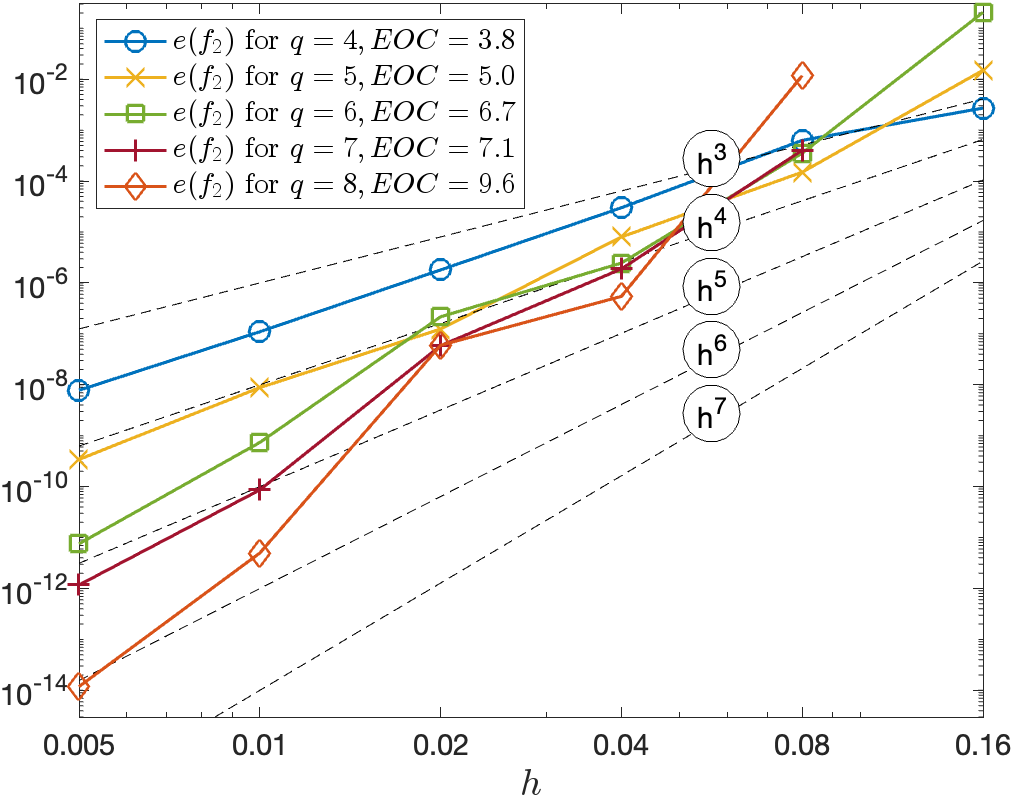}
} \\[1em]
\caption{RMS relative quadrature errors on 2D domain
$\Omega_2$ (disk sector) and its boundary, averaged over
8 distinct seeds and plotted as a function of $h$.
Nodes are generated with an advancing front method.
Runge function $f_1$ is centered at
$x_R = (\cos(3\pi/4)/2,\sin(3\pi/4)/2)$.
Results of Algorithms~\ref{algo:mfd} (MFD) and \ref{algo:spline} (BSP)
are compared for polynomial orders $q=4,\ldots,8$.
Reference slopes of convergence order $q-1$ are shown
using dotted lines. Estimated order of convergence EOC
is obtained by linear least-squares fitting.}
\label{fig:disk-sector}
\end{figure}

\begin{figure}[p]
\centering
\subcaptionbox{Errors for $f_1$ with MFD weights.}{
	\includegraphics[width=0.47\textwidth]{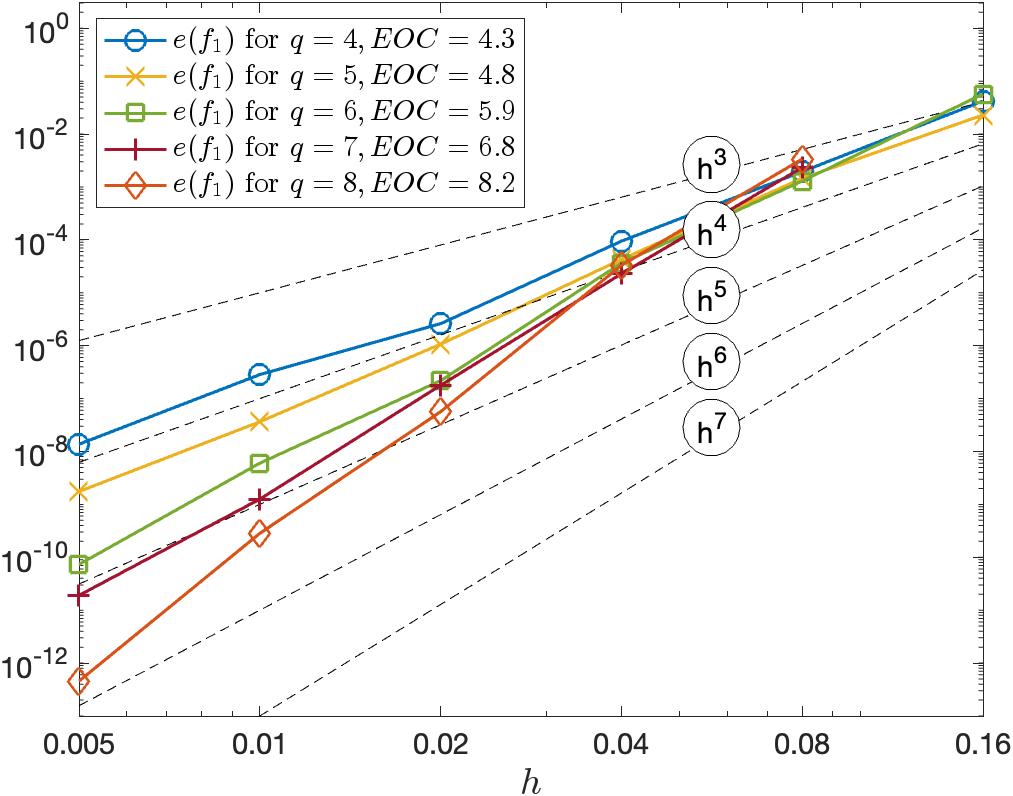}
} \hfill
\subcaptionbox{Errors for $f_1$ with BSP weights.}{
	\includegraphics[width=0.47\textwidth]{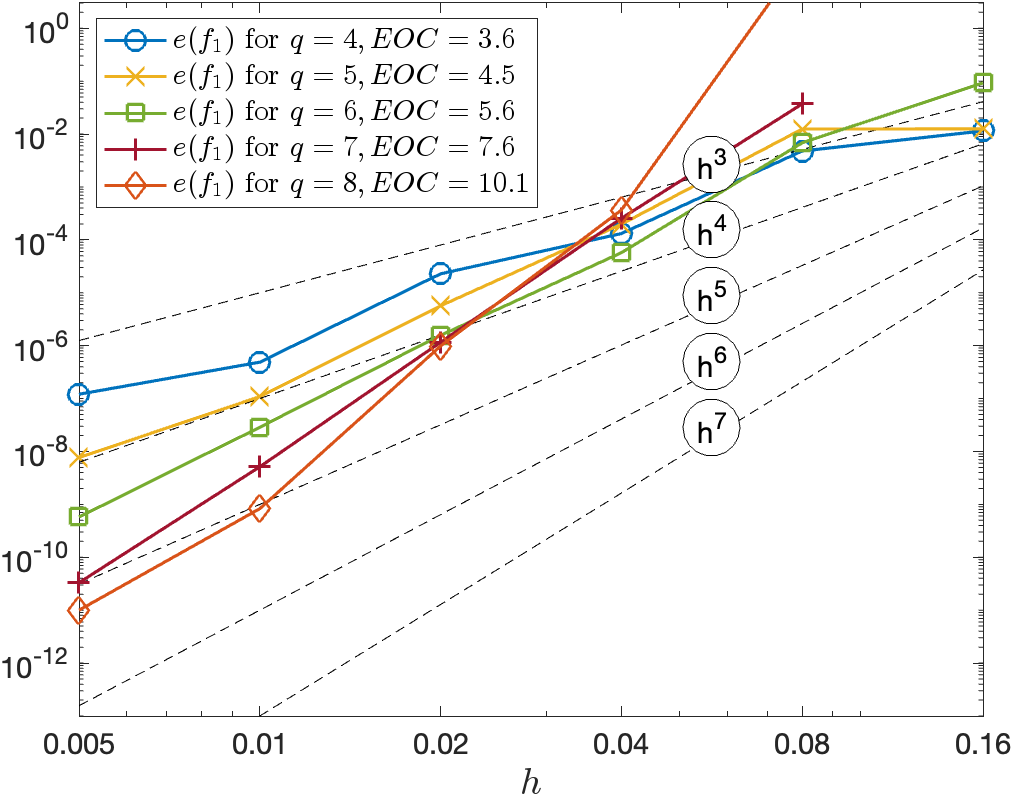}
} \\[3em]
\subcaptionbox{Errors for $g_1$ with MFD weights.}{
	\includegraphics[width=0.47\textwidth]{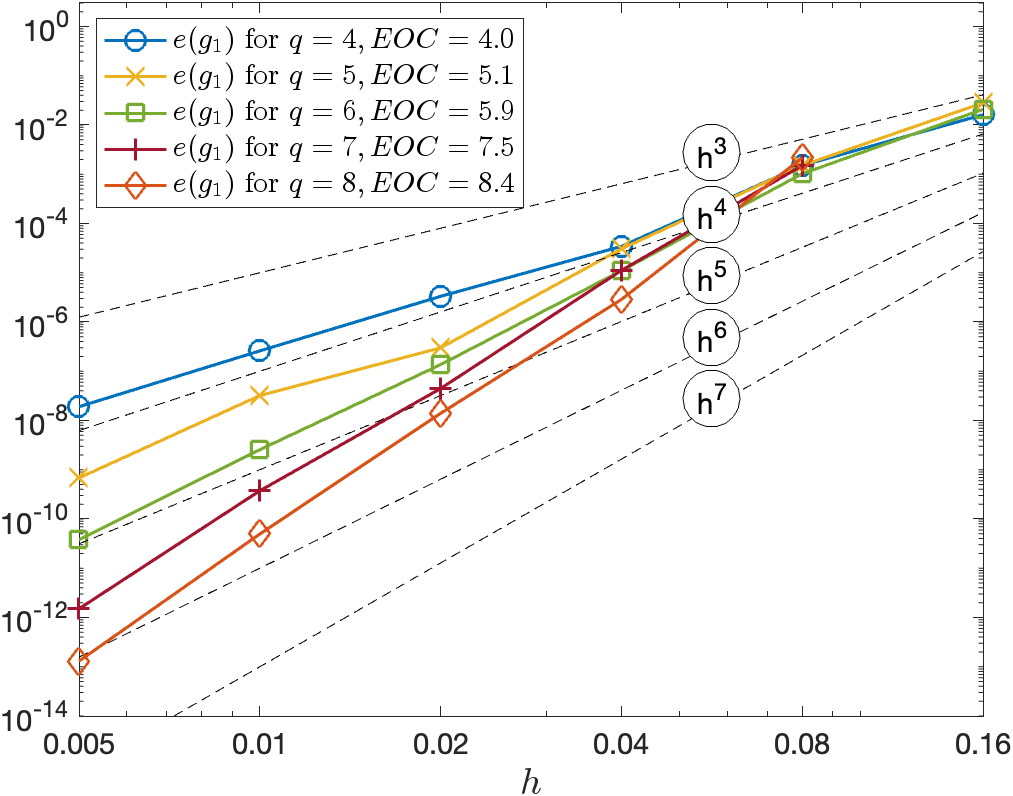}
} \hfill
\subcaptionbox{Errors for $g_1$ with BSP weights.}{
	\includegraphics[width=0.47\textwidth]{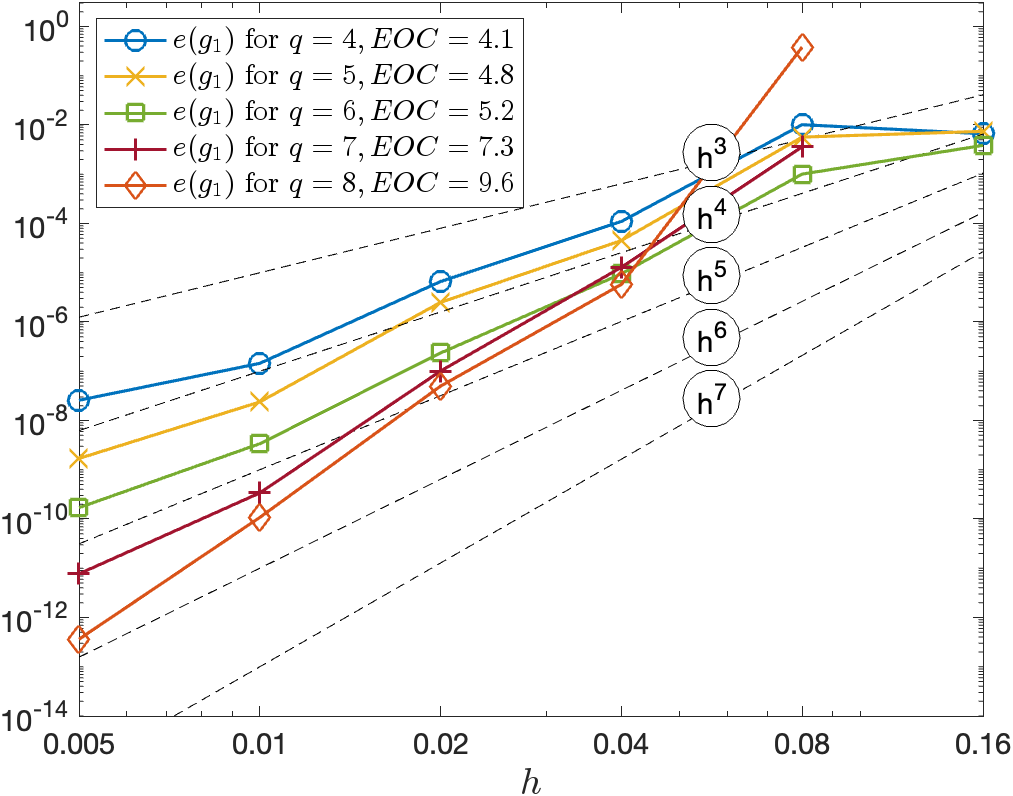}
} \\[3em]
\subcaptionbox{Errors for $f_2$ with MFD weights.}{
	\includegraphics[width=0.47\textwidth]{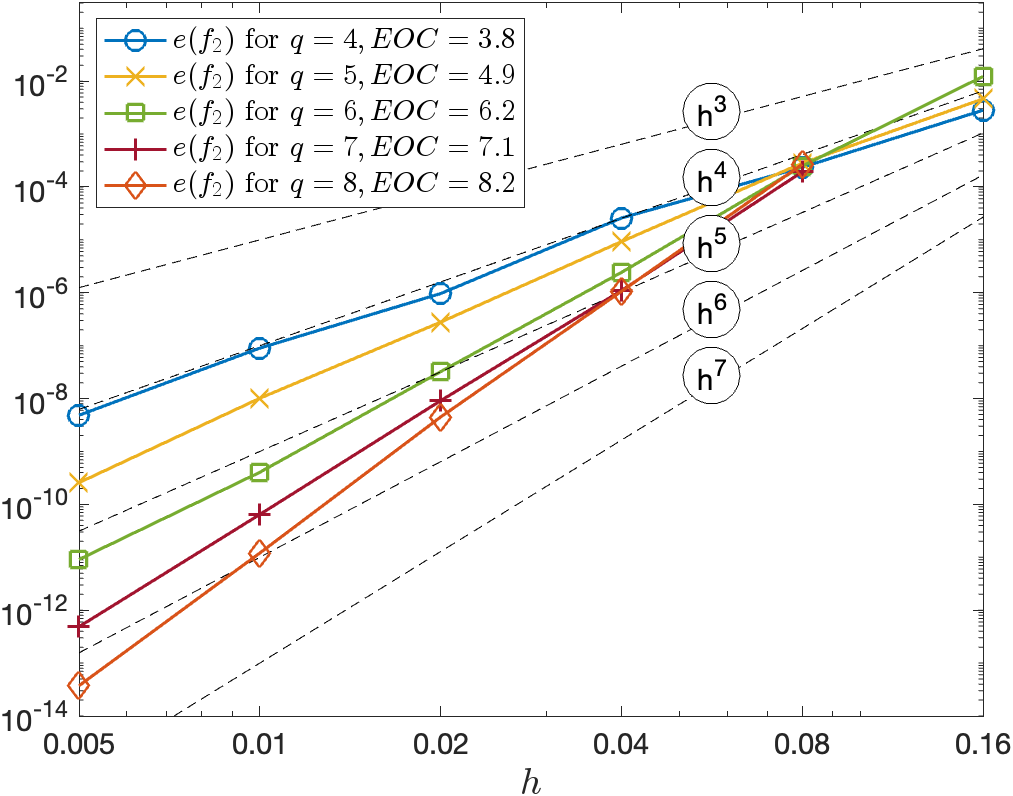}
} \hfill
\subcaptionbox{Errors for $f_2$ with BSP weights.}{
	\includegraphics[width=0.47\textwidth]{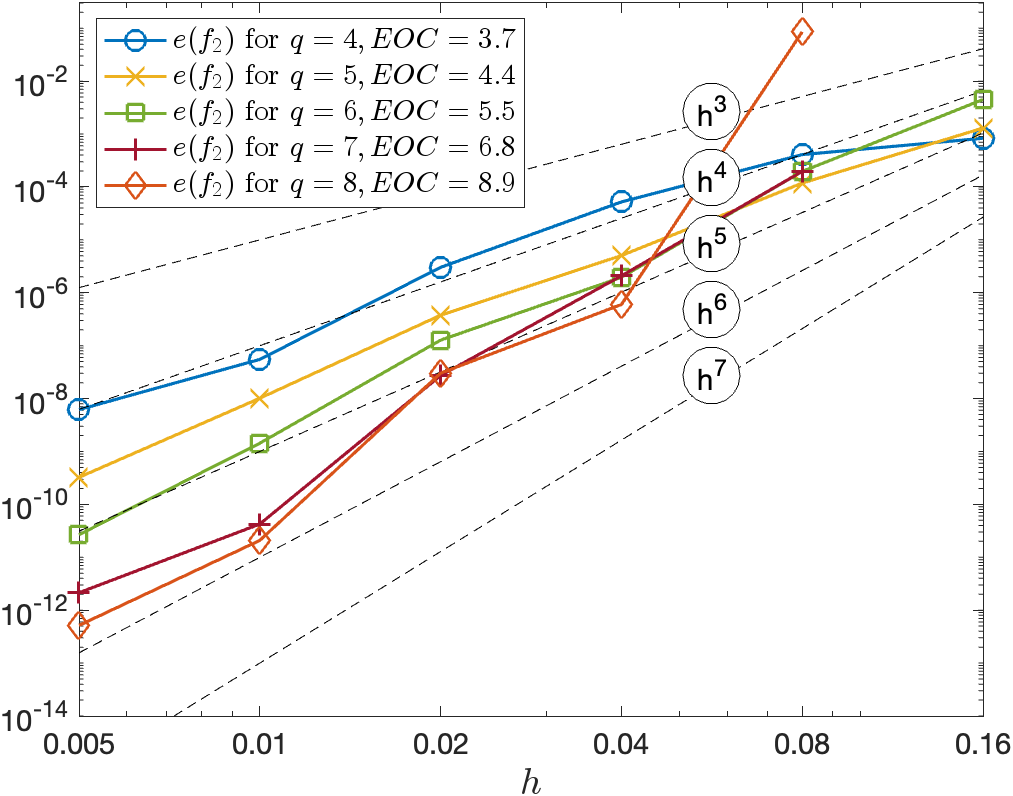}
} \\[1em]
\caption{RMS relative quadrature errors on 2D domain
$\Omega_3$ (Cassini oval) and its boundary, averaged over
8 distinct seeds and plotted as a function of $h$.
Nodes are generated with an advancing front method.
Runge function $f_1$ is centered at $x_R = (0,0)$.
Results of Algorithms~\ref{algo:mfd} (MFD) and \ref{algo:spline} (BSP)
are compared for polynomial orders $q=4,\ldots,8$.
Reference slopes of convergence order $q-1$ are shown
using dotted lines. Estimated order of convergence EOC
is obtained by linear least-squares fitting.}
\label{fig:cassini-oval}
\end{figure}

\begin{figure}[p]
\centering
\subcaptionbox{Errors for $f_1$ with MFD weights.}{
	\includegraphics[width=0.47\textwidth]{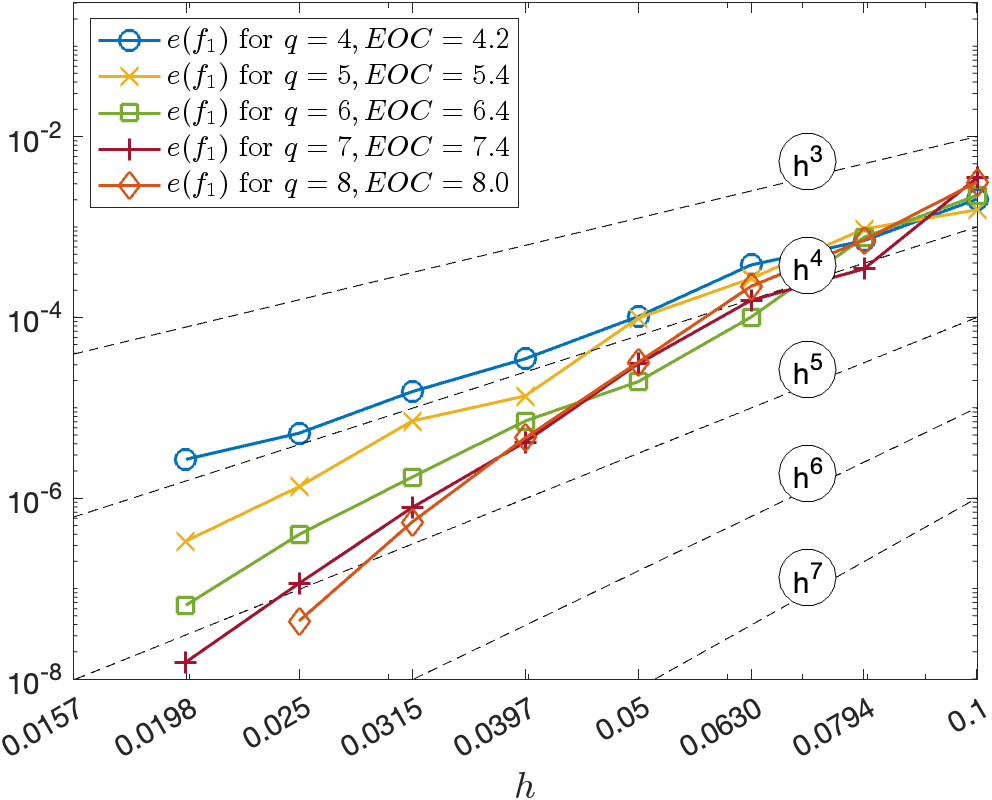}
} \hfill
\subcaptionbox{Errors for $f_1$ with BSP weights.}{
	\includegraphics[width=0.47\textwidth]{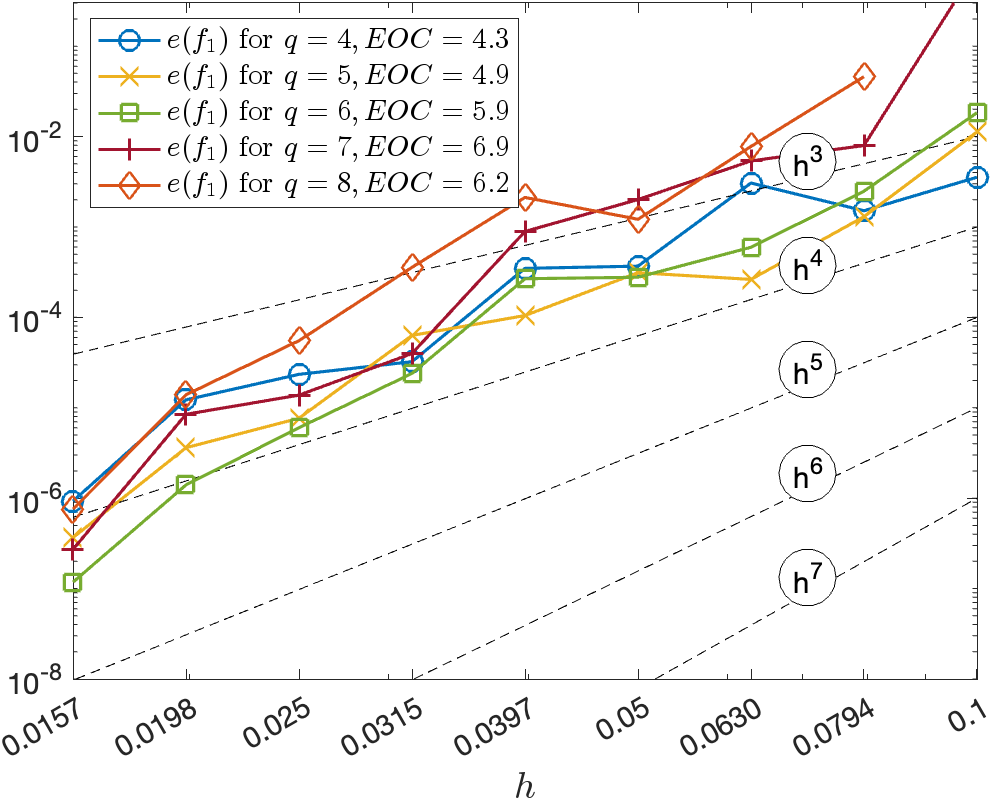}
} \\[3em]
\subcaptionbox{Errors for $g_1$ with MFD weights.}{
	\includegraphics[width=0.47\textwidth]{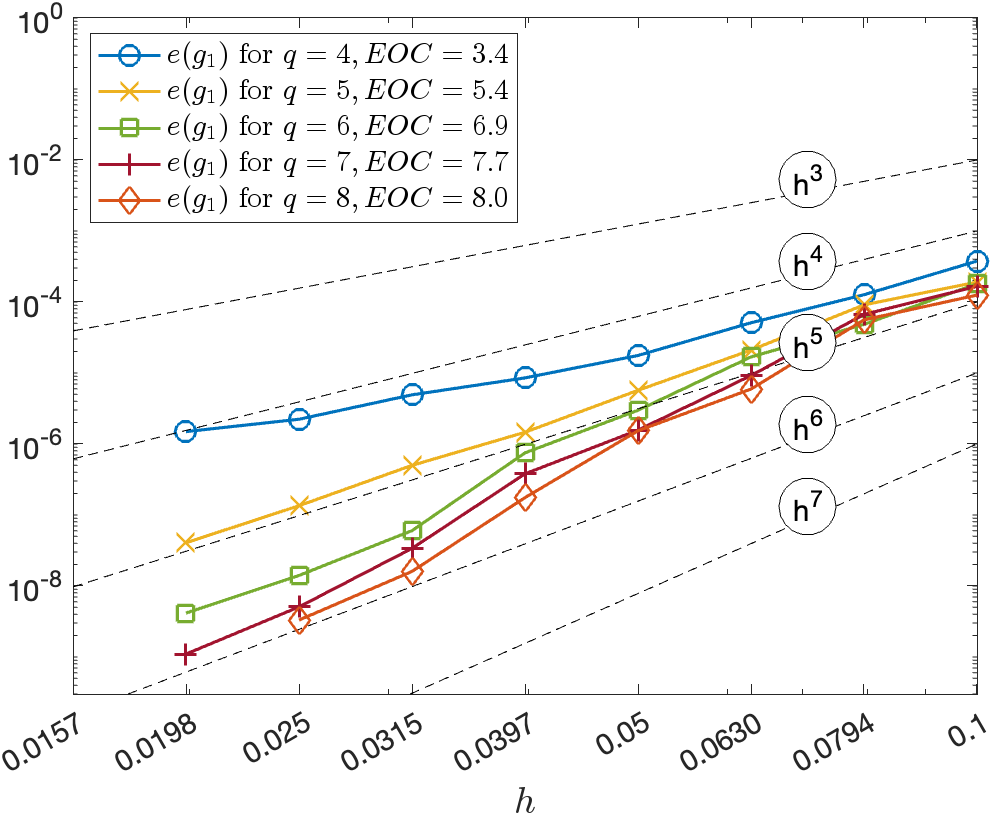}
} \hfill
\subcaptionbox{Errors for $g_1$ with BSP weights.}{
	\includegraphics[width=0.47\textwidth]{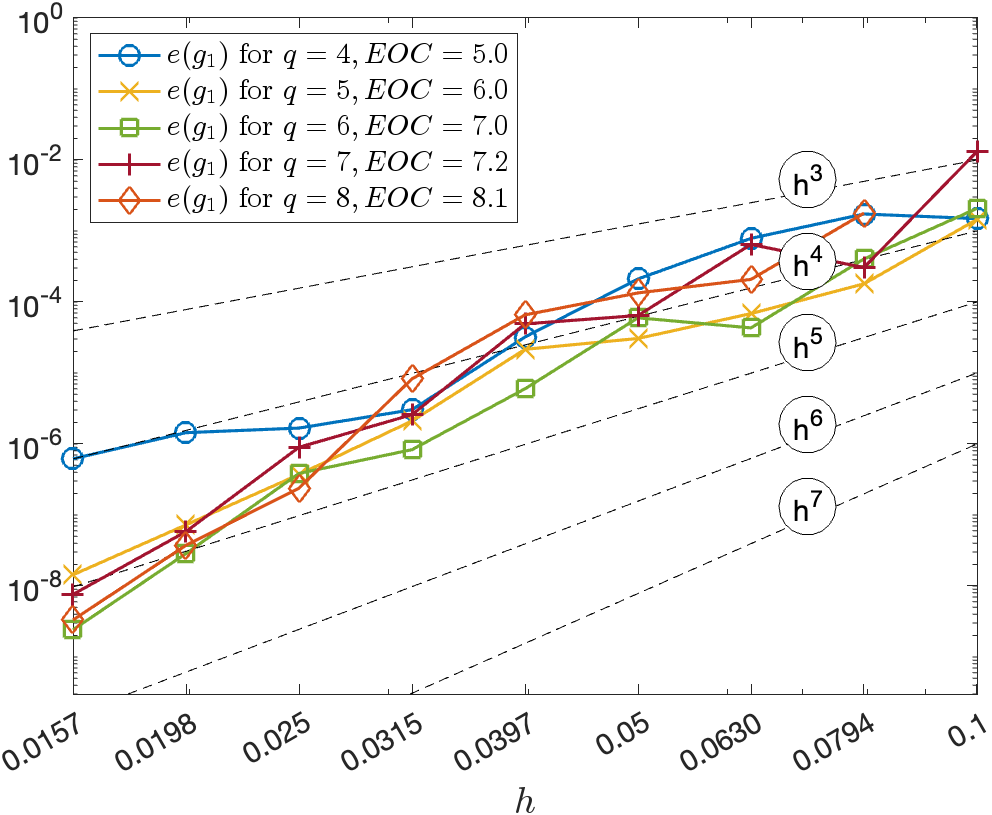}
} \\[3em]
\subcaptionbox{Errors for $f_2$ with MFD weights.}{
	\includegraphics[width=0.47\textwidth]{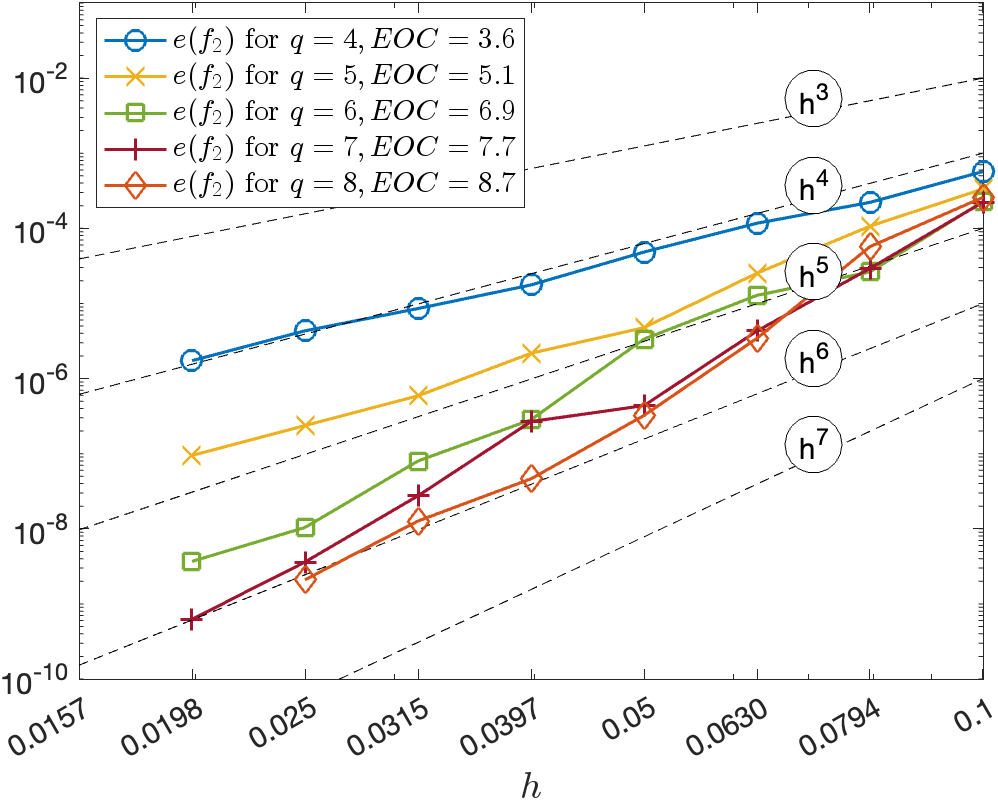}
} \hfill
\subcaptionbox{Errors for $f_2$ with BSP weights.}{
	\includegraphics[width=0.47\textwidth]{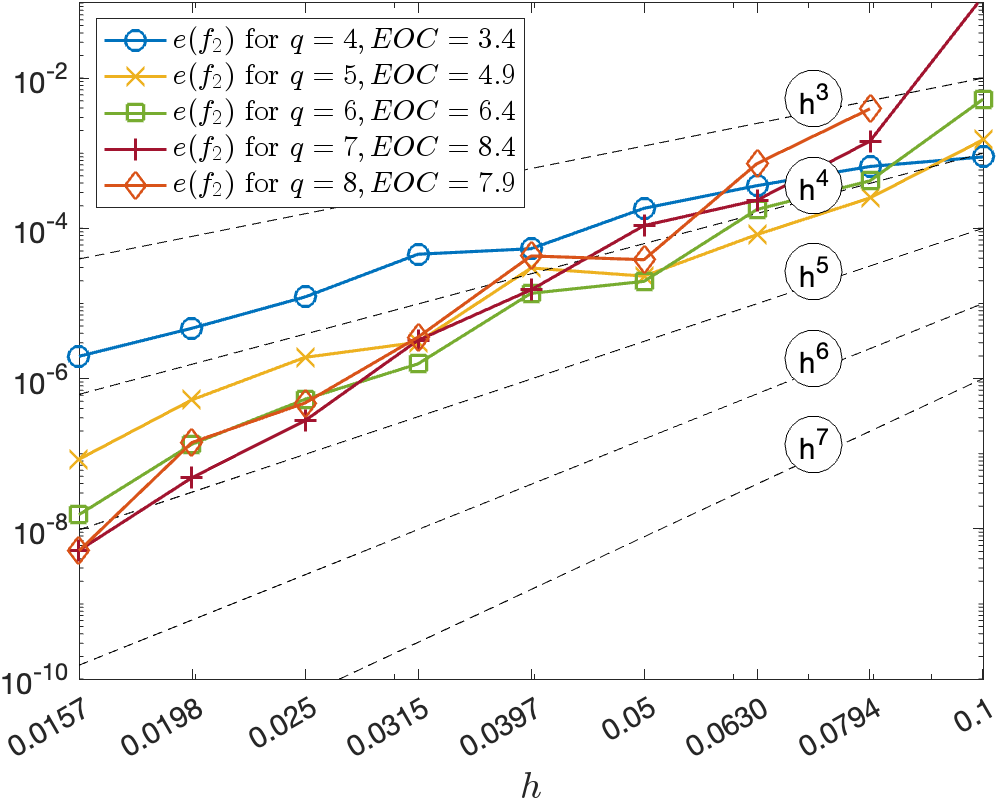}
} \\[1em]
\caption{RMS relative quadrature errors on 3D
domain $\Omega_5$ (L-shaped) and its boundary, averaged over
8 distinct seeds and plotted as a function of $h$.
Nodes are generated with an advancing front method.
Runge function $f_1$ is centered at $x_R = (1/2,1/2,0)$.
Results of Algorithms~\ref{algo:mfd} (MFD) and \ref{algo:spline} (BSP)
are compared for polynomial orders $q=4,\ldots,8$.
Reference slopes of convergence order $q-1$ are shown
using dotted lines. Estimated order of convergence EOC
is obtained by linear least-squares fitting.}
\label{fig:ell-bricks}
\end{figure}

\begin{figure}[p]
\centering
\subcaptionbox{Errors for $f_1$ with MFD weights.}{
	\includegraphics[width=0.47\textwidth]{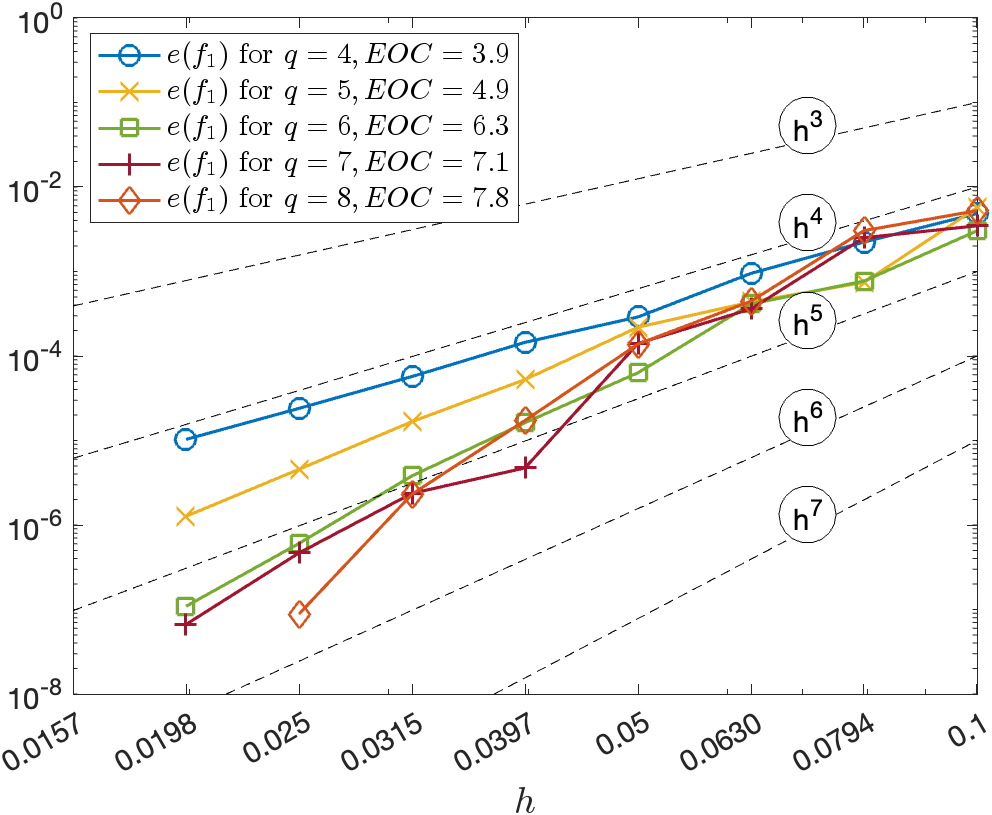}
} \hfill
\subcaptionbox{Errors for $f_1$ with BSP weights.}{
	\includegraphics[width=0.47\textwidth]{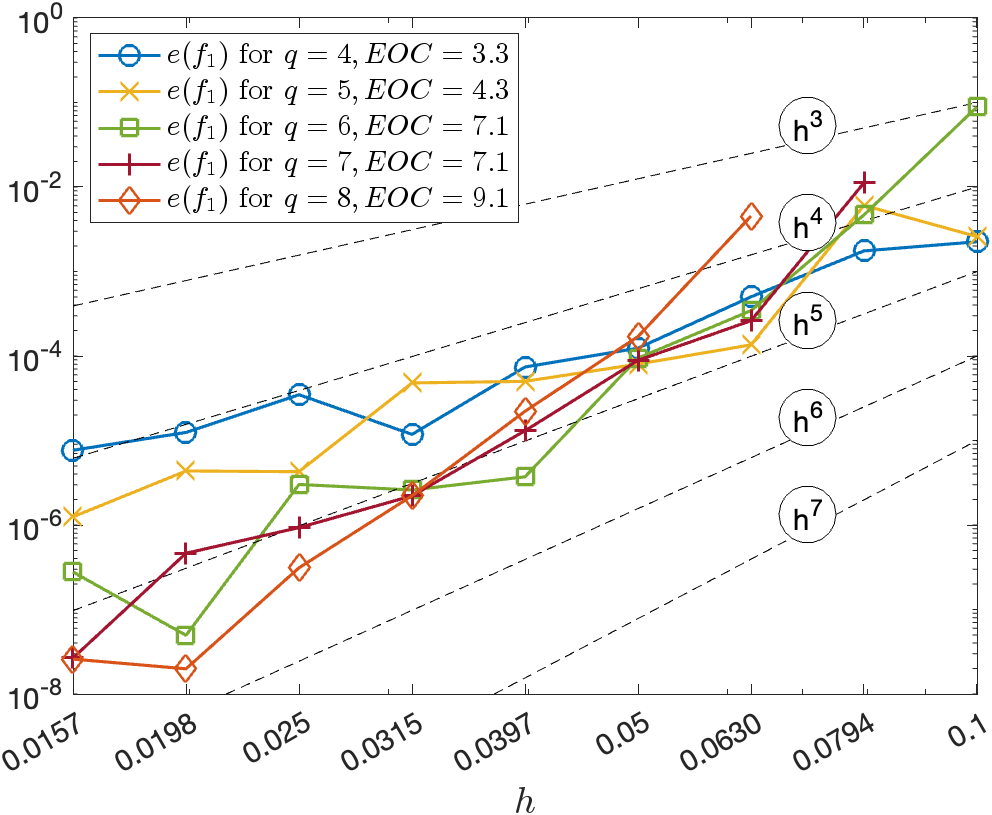}
} \\[3em]
\subcaptionbox{Errors for $g_1$ with MFD weights.}{
	\includegraphics[width=0.47\textwidth]{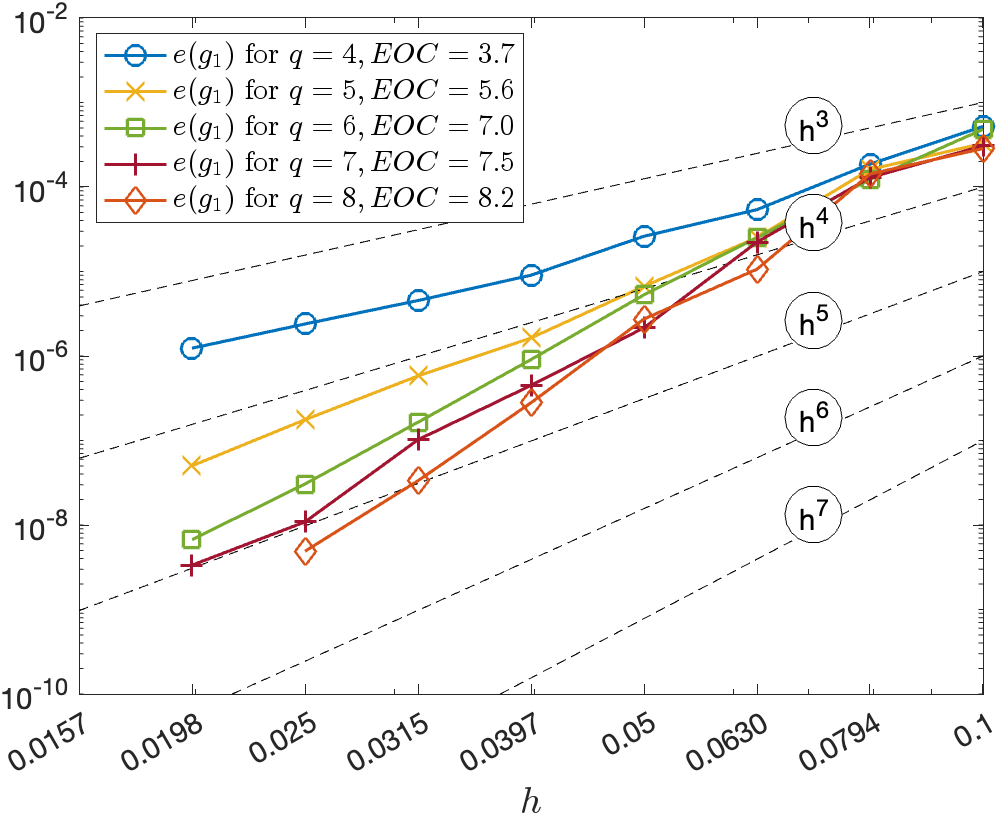}
} \hfill
\subcaptionbox{Errors for $g_1$ with BSP weights.}{
	\includegraphics[width=0.47\textwidth]{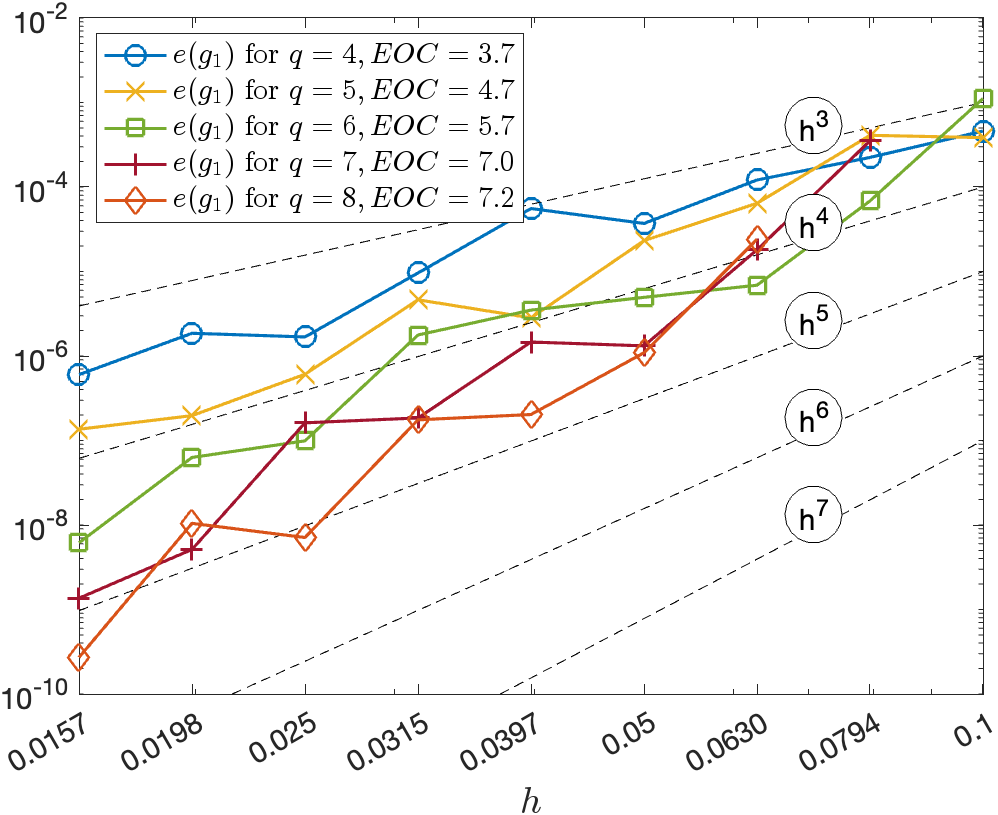}
} \\[3em]
\subcaptionbox{Errors for $f_2$ with MFD weights.}{
	\includegraphics[width=0.47\textwidth]{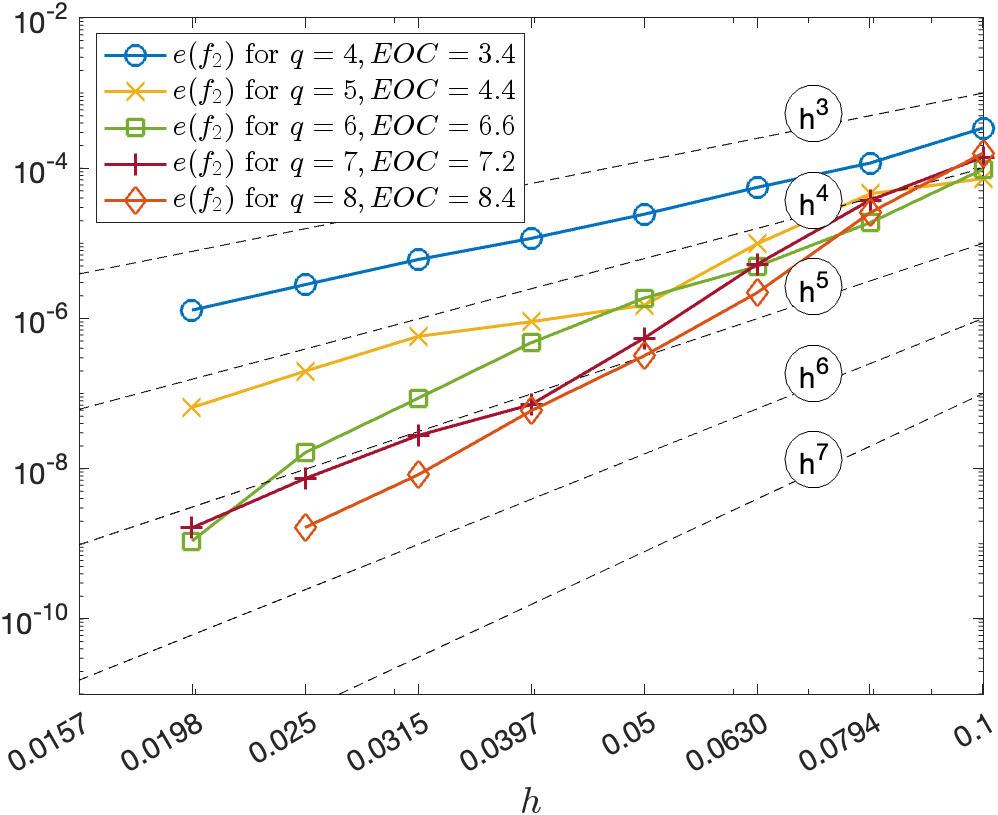}
} \hfill
\subcaptionbox{Errors for $f_2$ with BSP weights.\label{subfig:saturation}}{
	\includegraphics[width=0.47\textwidth]{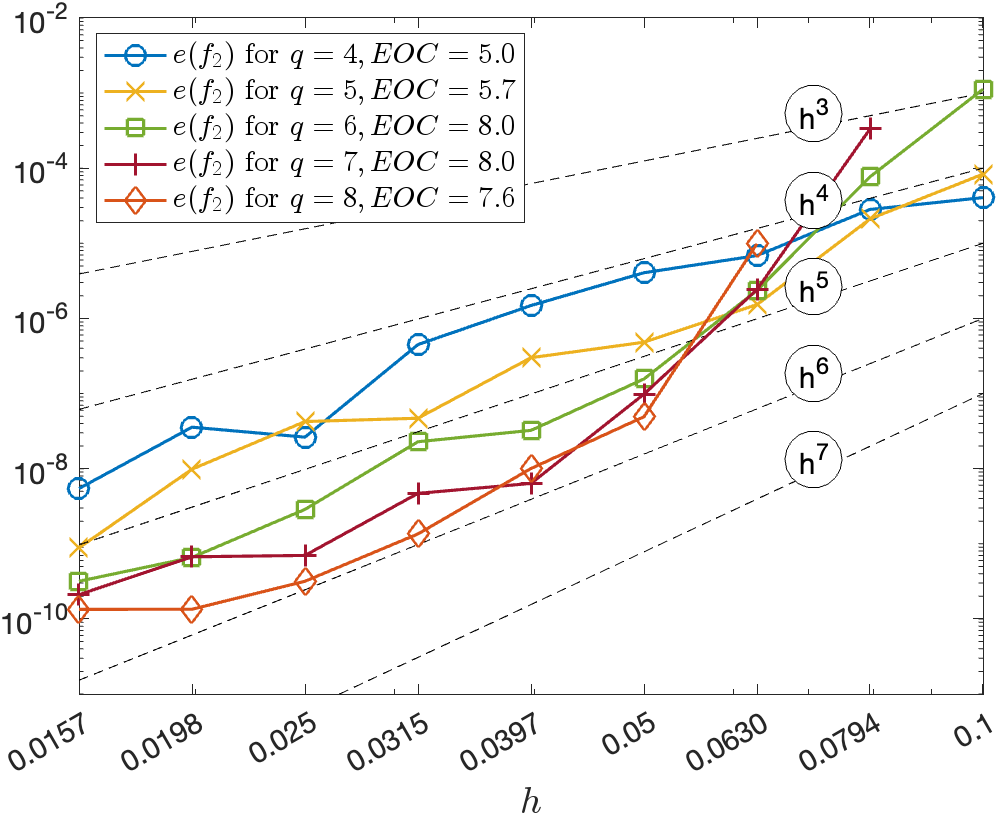}
} \\[1em]
\caption{RMS relative quadrature errors on 3D domain
$\Omega_6$ (torus) and its boundary, averaged over
8 distinct seeds and plotted as a function of $h$.
Nodes are generated with an advancing front method.
Runge function $f_1$ is centered at $x_R = (1,0,0)$.
Results of Algorithms~\ref{algo:mfd} (MFD) and \ref{algo:spline} (BSP)
are compared for polynomial orders $q=4,\ldots,8$.
Reference slopes of convergence order $q-1$ are shown
using dotted lines. Estimated order of convergence EOC
is obtained by linear least-squares fitting.}
\label{fig:torus}
\end{figure}

\begin{figure}[htbp!]
\centering
\subcaptionbox{Maximum MFD stability constants on 2D domains.}{
	\includegraphics[width=0.23\textwidth]{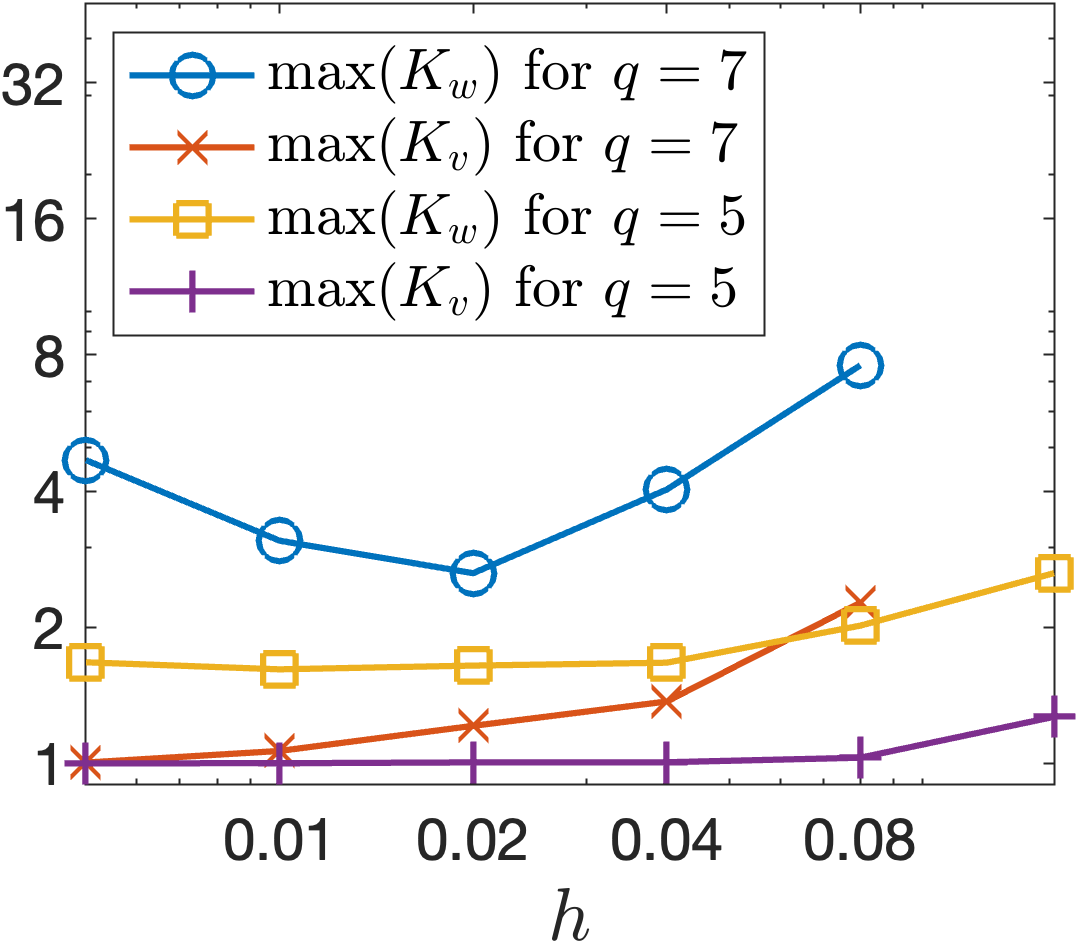}
} \hfill
\subcaptionbox{Maximum MFD stability constants on 3D domains.}{
	\includegraphics[width=0.23\textwidth]{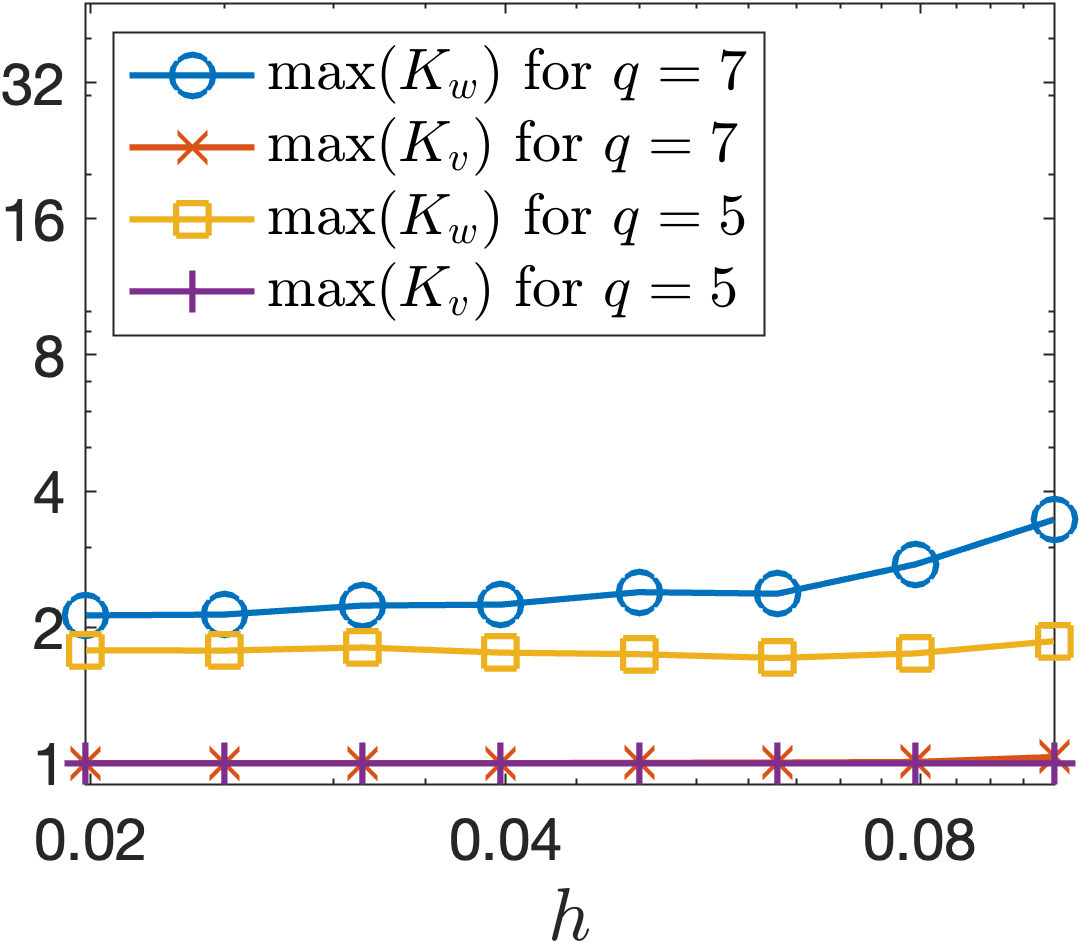}
} \hfill
\subcaptionbox{Maximum BSP stability constants on 2D domains.}{
	\includegraphics[width=0.23\textwidth]{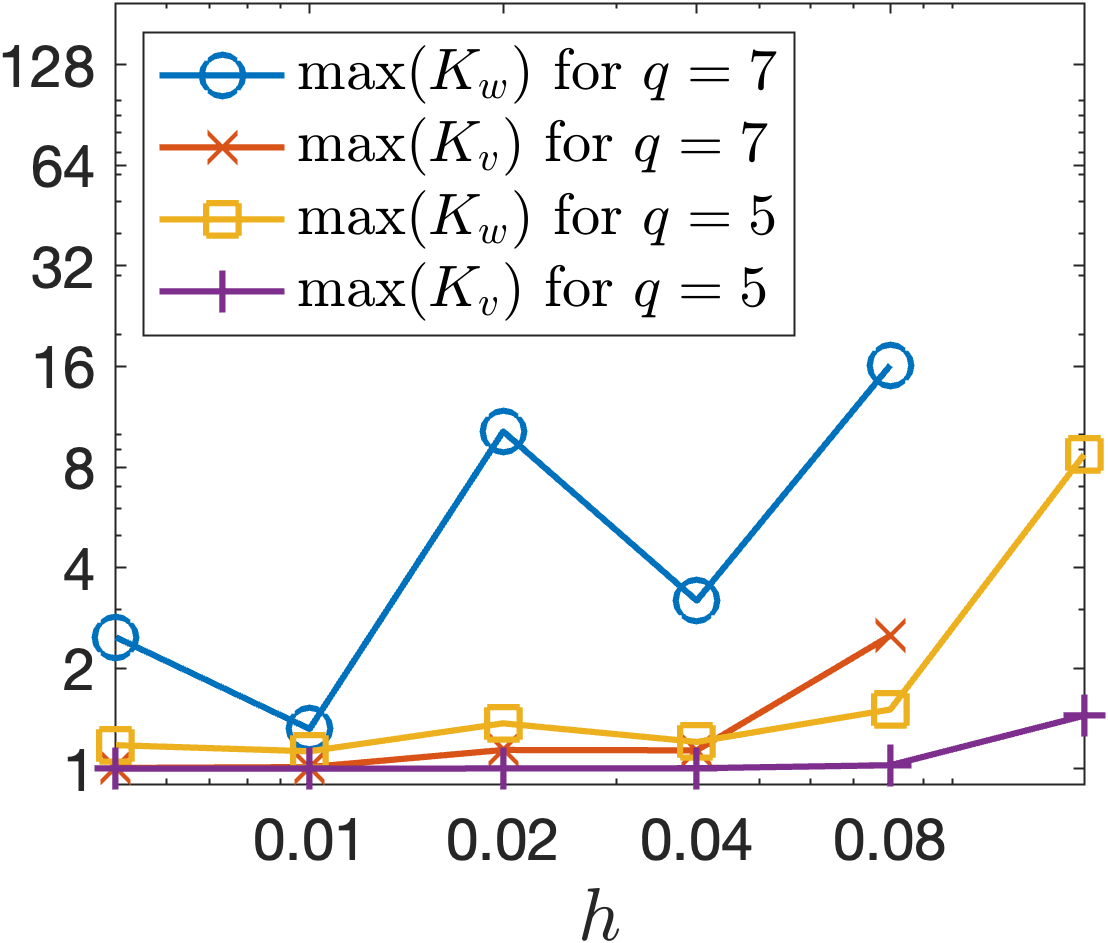}
} \hfill
\subcaptionbox{Maximum BSP stability constants on 3D domains.}{
	\includegraphics[width=0.23\textwidth]{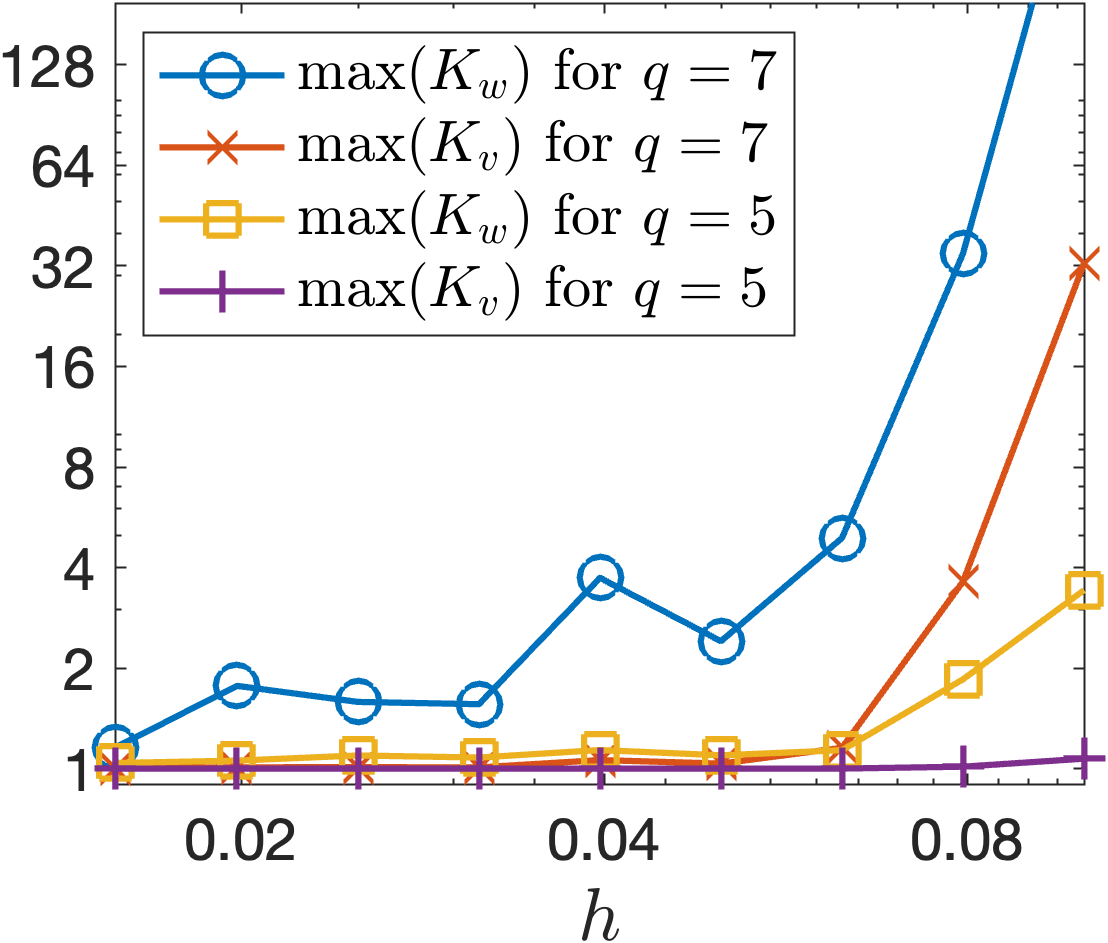}
}
\caption{Maximum values of the normalized
stability constants $K_w = \norm{w}_1/\abs{\Omega}$
and $K_v = \norm{v}_1/\abs{\partial\Omega}$ for the MFD
and BSP convergence tests with orders $q=5$ and $q=7$,
plotted as a function of $h$. The maximum is computed
across all domains $\Omega_2,\Omega_3,\Omega_5,\Omega_6$,
and all seeds $n_s = 1,\dots,8$.}
\label{fig:maxstab}
\end{figure}

\subsection{Comparison with other quadrature formulas}
\label{ssec:comp}

Now that the formulas produced
by Algorithms~\ref{algo:mfd} and \ref{algo:spline}
have been shown to be stable and convergent
with high order with respect to $h$,
we compare the errors of our meshless moment-free
quadrature to some well-established
methods  for numerical integration on two domains in 3D:
the solid torus $\Omega_6$ defined previously,
and a deco-tetrahedron
\begin{equation} \label{eq:scaledls}
\Omega_7=\Bigl\{
(x_1,x_2,x_3)\in \R^3 : \varphi(ax_1,ax_2,ax_3)+15<0\},\qquad a=40^{1/3},
\end{equation}
where
\begin{align}\label{eq:dtls}
\begin{split} 
\varphi(x_1,x_2,x_3)=
(x_1-2)^2 (x_1+2)^2 + (x_2-2)^2 (x_2+2)^2 + (x_3-2)^2 (x_3+2)^2 \\
+ 3(x_1^2 x_2^2 + x_2^2 x_3^2 + x_3^2 x_1^2) + 6 x_1 x_2 x_3
-10(x_1^2 + x_2^2 + x_3^2). 
\end{split}
\end{align}
The value of $a$ was chosen to make the volumes
of $\Omega_6$ and $\Omega_7$ approximately equal.
Note that the inclusion of $a$ in \eqref{eq:scaledls}
has the effect of scaling the domain
$\{\varphi(x_1,x_2,x_3)+15<0\}$
by a factor of $a^{-1}$ in every direction.
The domain $\Omega_7$ is shown in Figure~\ref{subfig:decotet}.
It is bounded by the closed quartic surface of genus 3
given by a level set of $\varphi(ax_1,ax_2,ax_3)$. The name
``deco-tetrahedron''
stems from \cite{PalaisKarcher2023},
and the particular quartic level set function~\eqref{eq:dtls}
has been shared by L.~Lampret on Mathematics Stack
Exchange after appearing in~\cite{Lampret}.

The boundary of the torus $\Omega_6$ is known in parametric form,
and it also admits the closed-form implicit representation
\[
\Bigl\{
(x_1,x_2,x_3) \in \R^3 :
\Bigl(\sqrt{x_1^2+x_2^2}-1\Bigr)^2 + x_3^2 - (0.32)^2 < 0
\Bigr\}.
\]

We compare the accuracy of our quadrature formulas for the
integral of the Renka function $f_2$ over $\Omega_6$
and $\Omega_7$ with three methods that also generate
quadrature formulas for 3D domains and are
available as open-source software.

\begin{itemize}
\item \textbf{Gmsh} is the quadrature method supplied with the
popular mesh generation software Gmsh \cite{geuzaine2009gmsh}. We have chosen it among
 publicly available meshing software because it supports high-order
meshing beyond degree two for 3D domains with parametric boundary.
 
\item \textbf{MFEM-MKO} is provided by MFEM,
an open-source C++ library for finite element
methods~\cite{anderson2021mfem}. This method 
is available for domains defined implicitly via a level set function. 
It is a reimplementation
of the scheme introduced in \cite{muller2013highly}.

\item \textbf{MFEM-Algoim} is another quadrature scheme
provided by MFEM for implicit 3D domains.
It relies on Algoim, the author's implementation
of the method presented in \cite{saye2015high}.

\end{itemize}

Figure~\ref{fig:comp} shows a comparison of relative
interior quadrature errors for the Renka function $f_2$
on the torus and on the deco-tetrahedron across four different
quadrature schemes: the BSP approach in Algorithm~\ref{algo:spline},
compound quadrature formulas on the curved
elements generated by Gmsh,
and the implicit methods MFEM-MKO and MFEM-Algoim.
To make the comparison meaningful, we plot the errors as a
function of $N_Y$, the total number of interior quadrature nodes.
Since the integral of $f_2$
cannot be computed on $\Omega_7$ with the approach
described at the beginning of Section~\ref{sec:numerical-tests},
we have used as a reference in Figure~\ref{subfig:decotet}
the value produced by
MFEM-Algoim with degree 15 and $N_Y \approx 5 \cdot 10^8$,
a choice clearly justified a posteriori by looking at the decay
in the errors for this method.

\begin{figure}[htbp!]
\centering
\subcaptionbox{Errors for $f_2$ on torus.}{
	\includegraphics[width=0.47\textwidth]{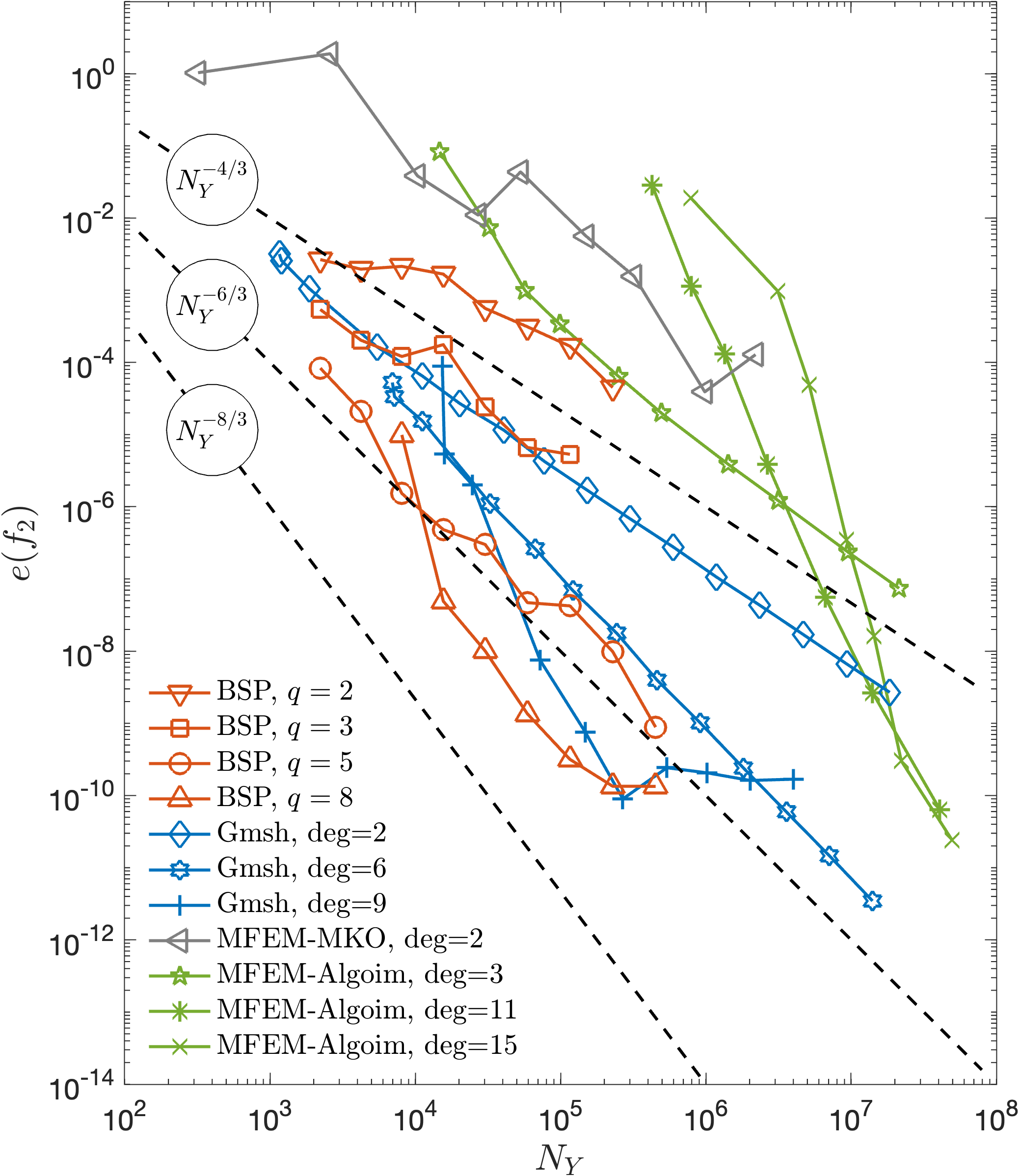}
} \hfill
\subcaptionbox{Errors for $f_2$ on deco-tetrahedron.}{
	\includegraphics[width=0.47\textwidth]{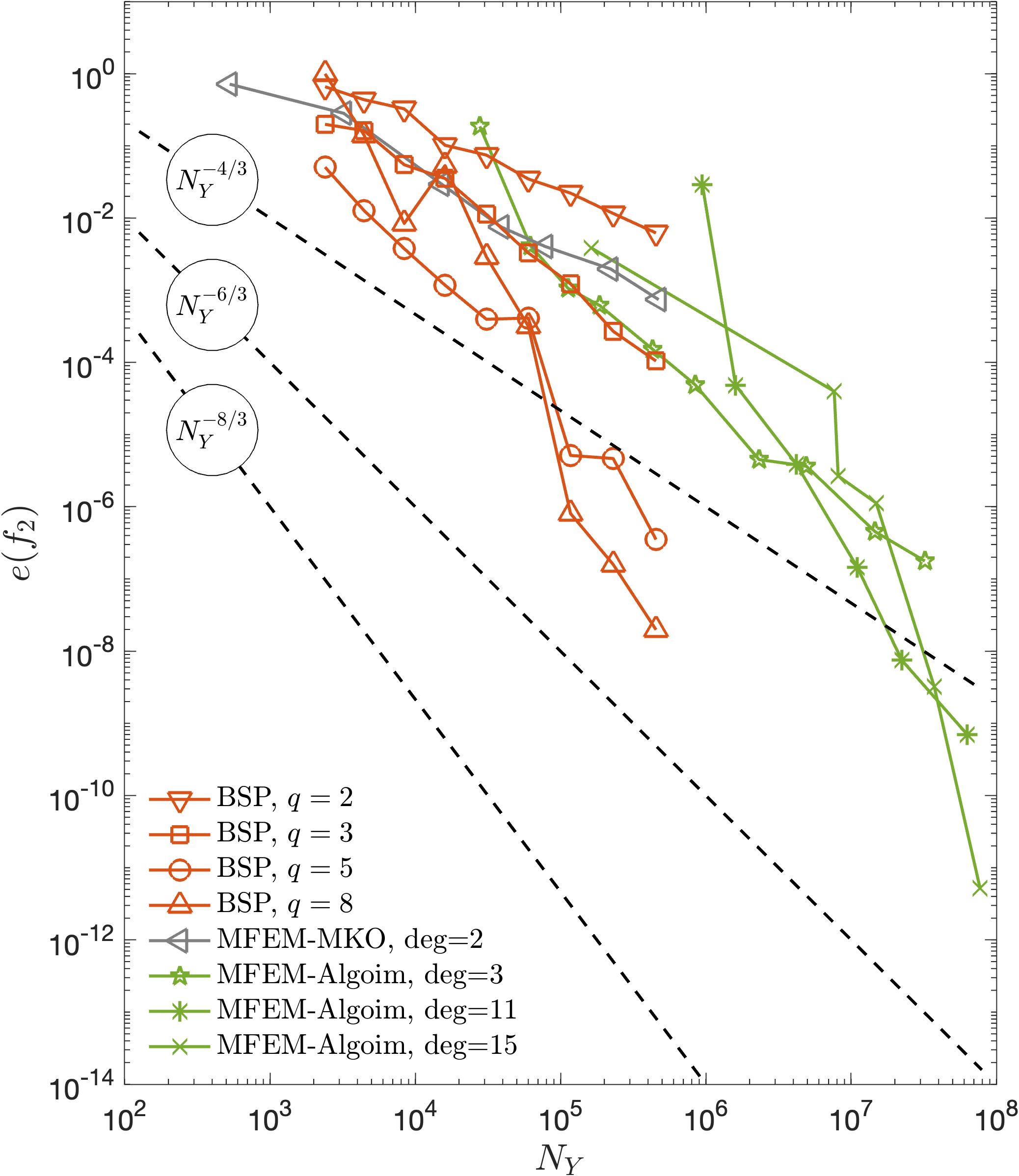}
} \\[1em]
\caption{Comparison of BSP algorithm with
state-of-the-art quadrature schemes on 3D domains
$\Omega_6$ (torus) and $\Omega_7$ (deco-tetrahedron).
Errors for the Renka function $f_2$
are relative, plotted as a function
of quadrature nodes $N_Y$, and averaged over 8 seeds
(BSP on nodes generated by an advancing
front method), or 8 shifts of the ambient
Cartesian grid (MFEM).
Reference slopes proportional to
powers 4,6,8 of the packing spacing $h_{ps}$
defined in \eqref{packsp} are shown
using dotted lines.}
\label{fig:comp}
\end{figure}

The settings and the errors for the BSP approach
on the torus are the same as in Figure~\ref{fig:torus}.
In particular, the same values of $h$ are used,
and errors are averaged over 8 seeds.
For the sake of comparison with lower-order
methods, we have added the results for $q=2$ and $q=3$.
On the deco-tetrahedron, all parameters
are also the same as in the tests of Section~\ref{ssec:conv},
with just two significant changes. First, only the set
of interior nodes $Y$ is generated by an advancing front
method, because a parametric description of the boundary
is not available. The set of boundary nodes $Z$ is
generated by a combination of rejection sampling of Halton
nodes, projection and thinning, as described in
Subsection~\ref{sssec:choice-of-nodes}.
As usual, the average spacing of nodes
in the two sets are matched, and $Z \subset Y$.
The second difference is that neither the volume nor
the surface area of $\Omega_7$ are known ahead of time.
For this reason, we use a purely moment-free
approach based on the fundamental solution, see
sections~\ref{ssec:on-the-choice-of-fhat-and-ghat}
and \ref{sssec:choice-fghat}. The fundamental solution
is centered at the point $(a^{-1},-2.5a^{-1},a^{-1})
\in \Omega_7$.

Gmsh only supports domains whose boundary is given
in parametric form, such as boundary representations
commonly used in computer-aided design.
For this reason, it is not included in the comparison
for the deco-tetrahedron.
Gmsh represents high-order tetrahedral meshes
by composing linear elements with a polynomial mapping
of arbitrary degree.
In our experiments, the mappings remained
injective even for high degree, presumably because
of the simple shape of $\partial\Omega_6$.
High-order mesh optimization was therefore not needed.
All the options in Gmsh were left at their default values,
except for \texttt{Mesh.ElementOrder}, which was set equal
to the considered polynomial degree, and \texttt{Mesh.MeshSizeMax},
which was set to various spacing parameters $h$ used to
control the maximum diameter of the tetrahedra in the mesh.
For quadratic meshing, a symmetrical Gaussian formula
with four nodes and degree of exactness 2
is used on each element. For degrees 6 and 9,
we have used formulas with 24 and 53 nodes and degrees
of exactness 6 and 9, respectively, as returned by the function
\texttt{gmsh.model.mesh.getIntegrationPoints}
exposed by Gmsh's python API. No quadrature nodes
are shared by adjacent elements.

As for the algorithms MFEM-MKO and MFEM-Algoim,
both are immersed methods that take as input an
ambient mesh surrounding $\Omega$, and so their
outputs are somewhat dependent on this choice,
although not in a critical way.
For our numerical experiments, we have chosen
a uniform Cartesian grid around $\Omega$
with step size $h$, built by partitioning
the same bounding box used in the BSP algorithm.
As in~\cite{muller2013highly}, the errors were
averaged over several random shifts of the ambient mesh.
We have used 8 different displacements, each one defined
by a uniformly random vector in $[-h,h]^3$.
To ensure that $\Omega$ remains included in the
shifted bounding box, the grid is symmetrically
extended by two elements in each direction (for
a total extension of length $2h$).

In the case of MFEM-MKO,
the degree in the legend of Figure~\ref{fig:comp}
corresponds to the maximum degree of polynomials
used in the moment-fitting equations.
In the case of MFEM-Algoim, it corresponds to the degree
of exactness of the univariate Gaussian formulas
used internally in combination with a
dimension-reduction approach.
In either case, the level set function is first
approximated by MFEM with a polynomial
of the same degree on each cell of the grid.
Since the level set function for the deco-tetrahedron
is a quartic polynomial, the approximation would be exact
for degrees 6 and 9. To make the comparison more fair,
we have applied a non-polynomial
transformation to \eqref{eq:dtls}.
More precisely, the quartic equation
$\varphi(ax_1,ax_2,ax_3) + 15 = 0$
that defines the boundary of $\Omega_7$
has been transformed into
\[
\sqrt{\varphi(ax_1,ax_2,ax_3)+100} - \sqrt{85} = 0,
\]
which still defines the same domain $\Omega_7$ but the level set
function is not a polynomial. The function
$\varphi(ax_1,ax_2,ax_3)+100$ is strictly positive for
all $(x_1,x_2,x_3) \in \R^3$.

Comparing the quadrature errors in Figure~\ref{fig:comp},
we see that Algorithm~\ref{algo:spline} performs well:
the most accurate scheme in this tests for both torus and
deco-tetrahedron belongs to the
BSP family for all node counts from $3 \cdot 10^3$
to $5 \cdot 10^5$.
For the torus, the BSP method with $q=3$ has errors comparable
to quadratic meshing, whereas the plots for the higher order methods
with $q=5$ and $q=8$ have slopes comparable to
Gmsh of degrees 6 and 9, respectively, although the errors of
BSP remain in part significantly
smaller for the same number of nodes.
The accuracy of Gmsh starts degrading
for degrees larger than 9, so higher degrees
were not included in the plot. 

MFEM-MKO was not found to be competitive:
we only report the errors for degree 2 because higher
degrees do not show any improvement.

As for MFEM-Algoim, the Gaussian formulas combined
with a dimension reduction technique reliably deliver
methods of very high order, which is reflected in the steep
slopes of the error plots for deg=11 and 15. However, the recursive
subdivision process on which it is based seems to produce
extremely large numbers of quadrature nodes for a given
accuracy target, often 10 or 100 times larger than
the ones obtained with BSP or Gmsh.
It is only for $N_Y \approx 10^8$ that
MFEM-Algoim overtakes the other methods.

\section{Conclusion} \label{sec:concl}
In this paper we introduce a new method of generating high-order
quadrature formulas on user-supplied nodes for multivariate
domains and their boundaries, based on the numerical
approximation of the divergence operator. The key idea of the
method is to seek a simultaneous approximation of the integrals
over the domain $\Omega$ and its boundary $\partial\Omega$ by
exploiting the connection between these two integrals provided
by the divergence theorem. We develop an error analysis
in a rather general setting based on the discretization
of a boundary value problem for the divergence operator.
Numerical experiments demonstrate the
robustness and high accuracy of the method on smooth and
piecewise smooth domains in 2D and 3D.

A key improvement in comparison to existing approaches for
user-supplied nodes is that the method is \emph{moment-free},
that is, the precomputation of moments, i.e.\ integrals
of some basis functions over $\Omega$ or $\partial\Omega$,
is not required. This makes our scheme easy to implement even
for complicated domains, independently from any specific
representation, such as parametric or implicit forms. In addition
to the positions of the quadrature nodes, only the directions of
outward-pointing normals at each node on $\partial\Omega$ are
needed. Quadrature weights are computed as a minimum-norm
solution to an underdetermined sparse linear system, which allows
application of efficient sparse linear algebra routines.

In the setting where nodes are to be generated for a given domain,
our method benefits from its meshless character, because it avoids
problematic high-order mesh generation otherwise
needed when the standard approach of a compound quadrature
formula is applied to a complicated 3D domain.
Instead, we suggest node generation by the meshless
advancing front method, and successfully test it numerically.
Moreover, in Section~\ref{sssec:choice-of-nodes} we show that the
method is robust with respect to the placement of the nodes,
performing reasonably on random and quasi-random nodes.

In Section~\ref{ssec:palgos} we described two particular practical
algorithms to compute quadrature weights by our method.
Although the algorithms share a common framework using numerical 
differentiation, the first one (MFD) is based on meshless
finite differences formulas, while the second (BSP) on
collocation with unfitted tensor-product B-splines
in a bounding box around $\Omega$.
We demonstrated in Section~\ref{ssec:conv} the effectiveness
of both algorithms by extensive numerical tests on a number of
smooth and piecewise smooth domains in 2D and 3D, including some
with reentrant corners and edges.
Furthermore, we demonstrated in Section~\ref{ssec:comp}
that the accuracy of our methods can exceed that
of state-of-the-art quadrature schemes
provided by the packages Gmsh and MFEM,
if the errors are compared for the same
number of interior quadrature nodes.
Our quadrature formulas were found to be of high convergence order
(above $h^7$ in numerical tests), and to possess small
stability constants, although not positive in general.
Performance of the two algorithms, MFD and BSP,
was found to be comparable, with no clear winner overall.
In both cases the convergence order matches
that of numerical differentiation,
as expected from our error analysis.
Variants of these algorithms, including those minimizing
the 1-norm instead of the 2-norm, or discretizing the
Laplacian instead of the divergence operator,
were also tested, but found to be unsuitable for either large sets
of quadrature nodes or nonsmooth domains, respectively.

While the results of this paper highlight the effectiveness of
the new approach for numerical integration on 2D and 3D domains, 
there are important directions for further research. 

In particular,
the computation of the quadrature weights may be made even more 
efficient by developing an effective preconditioning
strategy for the iterative solution of the linear system,
or by partitioning $\Omega$, so that computations can be localized 
and carried out in parallel, without challenging high-order 
reparametrization of the subdomains needed in the mesh-based 
compound quadrature formulas.
Advanced methods for the selection of sets
of influence in the MFD approach also have the potential
to improve performance by enhancing conditioning and
sparsity of the resulting linear system.

Even if the normalized stability constants
$\norm{w}_1/\abs{\Omega}$ and $\norm{v}_1/\abs{\partial\Omega}$ 
are small in our experiments,
stability of $w$ and $v$ remains an empirical observation,
and so an enhanced error analysis that guarantees small
bounds on $\norm{w}_1$ and $\norm{v}_1$ under appropriate
restrictions on the node sets $Y$ and $Z$, possibly after
some modification of the current algorithms, is highly desirable.
Modification of the algorithms
in order to generate positive quadrature weights would also be of
high interest for certain applications, provided that
the methods remain efficient.
Further analysis is needed in order to relax the suboptimal
assumptions on $f$ and $g$ in the error bounds  \eqref{eq:fb},
\eqref{eq:gb}, \eqref{eq:fbBS} and \eqref{eq:gbBS}, and extend
them to the divergence approach on piecewise smooth domains, 
where a major obstacle is the lack of research on high order
regularity of the boundary value problem
\eqref{eq:bvp-divergence} in this setting.

The flexibility of the proposed method makes it a promising tool
for the development of adaptive algorithms for the
integration of functions with sharp features or singularities,
and integration on non-Lipschitz domains, by
using variable density and local refinement of nodes.

Finally, investigations on the choice of node generation, numerical
differentiation methods and further ingredients of the general 
framework presented in Section~\ref{ssec:algorithm}
are needed in order to develop effective setups for the
integration over domains on surfaces and manifolds,
3D CAD domains in B-rep form, and for specific tasks
such as integration of cut elements in immersed methods
or implementation of meshless methods for PDEs in weak form.

\section*{Acknowledgments}
Bruno Degli Esposti is member
of the Gruppo Nazionale Calcolo Scientifico - Istituto
Nazionale di Alta Matematica (GNCS-INdAM).
The INdAM support through GNCS (CUP E53C23001670001)
is gratefully acknowledged.

\bibliographystyle{abbrv} 
\bibliography{bibliography,bibliography_oleg}

\end{document}